\newcommand{\klockan}{\the\hours:{\ifnum\minutes<10 0\fi}\the\minutes}
\newcommand{\tid}{\today\ \klockan}
\newcommand{\prtid}{\smash{\raise 10mm \hbox{\LaTeX ed \tid}}}
\renewcommand{\prtid}{}
\def\sectionmark#1{} %\markboth{{\sectnr #1}}{{\sectnr #1}}} %Journal
\def\subsectionmark#1{}
\newcommand{\sectnr}{\ifnum \c@secnumdepth >\z@
                 \thesection.\hskip 1em\relax \fi}
\def\@evenhead{\footnotesize\rm\thepage\hfil\leftmark\hfil\llap{\prtid}}
\def\@oddhead{\footnotesize\rm\rlap{\prtid}\hfil\rightmark\hfil\thepage}
\def\tableofcontents{\section*{Contents} %\@mkboth{Contents}{Contents}} %Journal
 \@starttoc{toc}}
\def\@biblabel#1{#1.}
\let\Thebibliography=\thebibliography
\renewcommand{\thebibliography}[1]{\def\@mkboth##1##2{}\Thebibliography{#1}
\addcontentsline{toc}{section}{References}
\frenchspacing % Maybe not needed
% Deleting extra vertical space
\setlength{\@topsep}{0pt}% Delete if extra space before list
\setlength{\itemsep}{0pt}%
\setlength{\parskip}{0pt plus 2pt}%
}
\def\mdots@{\mathinner.\nonscript\!.%
 \ifx\next,.\else\ifx\next;.\else\ifx\next..\else
 \nonscript\!\mathinner.\fi\fi\fi}
\let\ldots\mdots@
\let\cdots\mdots@
\let\dotso\mdots@
\let\dotsb\mdots@
\let\dotsm\mdots@
\let\dotsc\mdots@
\def\vdots{\vbox{\baselineskip2.8\p@ \lineskiplimit\z@
    \kern6\p@\hbox{.}\hbox{.}\hbox{.}\kern3\p@}}
\def\ddots{\mathinner{\mkern1mu\raise8.6\p@\vbox{\kern7\p@\hbox{.}}%
    \raise5.8\p@\hbox{.}\raise3\p@\hbox{.}\mkern1mu}}
\let\Enumerate=\enumerate
\renewcommand{\enumerate}{\Enumerate%
% Deleting extra vertical space
\setlength{\@topsep}{0pt}% Delete if extra space before list
\setlength{\itemsep}{0pt}%
\setlength{\parskip}{0pt plus 1pt}%
\renewcommand{\theenumi}{\textup{(\alph{enumi})}}%
\renewcommand{\labelenumi}{\theenumi}%
}
\let\endEnumerate=\endenumerate
\renewcommand{\endenumerate}{\endEnumerate\unskip}
\def\@seccntformat#1{\csname the#1\endcsname.\quad}
\newcommand{\authortitle}[2]{\author{#1}\title{#2}\markboth{#1}{#2}}
\newcommand{\auth}[2]{{#2. #1}}
\def\idxauth{\auth}
\newcommand{\art}[6]{{\sc #1, \rm #2, \it #3\/ \bf #4 \rm (#5), \mbox{#6}.}}
\newcommand{\artnopt}[6]{{\sc #1, \rm #2, \it #3\/ \bf #4 \rm (#5), \mbox{#6}}}
\newcommand{\artprep}[3]{{\sc #1, \rm #2, \it #3.}}
\newcommand{\artin}[3]{{\sc #1, \rm #2,  in #3.}}
\newcommand{\arttoappear}[3]{{\sc #1, \rm #2, to appear in \it #3}}
\newcommand{\book}[3]{{\sc #1, \it #2, \rm #3.}}
\newcommand{\AND}{{\rm and }}
\newtheoremstyle{descriptive}%
  {\topsep}   %{\medskipamount}          % Space above
  {\topsep}   %  {\medskipamount}          % Space below
  {\rmfamily} % Body font
  {}          % Indent
  {\bfseries} % Head font
  {.}         % Punctuation after thm head
  { }         % Space after thm head
  {}          % Thm head spec(?)
\newtheoremstyle{propositional}%
  {\topsep}   %  {\medskipamount}          % Space above
  {\topsep}   %  {\medskipamount}          % Space below
  {\itshape}  % Body font
  {}          % Indent
  {\bfseries} % Head font
  {.}         % Punctuation after thm head
  { }         % Space after thm head
  {}          % Thm head spec(?)
\theoremstyle{propositional}
\newtheorem{thm}{Theorem}[section]
\newtheorem{prop}[thm]{Proposition}
\newtheorem{lem}[thm]{Lemma}
\theoremstyle{descriptive}
\newtheorem{deff}[thm]{Definition}
\newtheorem{example}[thm]{Example}
\newtheorem{remark}[thm]{Remark}
\renewenvironment{proof}[1][\proofname]{\par
  \pushQED{\qed}%
  \normalfont
%\topsep6\p@\@plus6\p@\relax % Removed by Anders Bj\"orn
  \trivlist
  \item[\hskip\labelsep
        \itshape
    #1\@addpunct{.}]\ignorespaces
}{%
  \popQED\endtrivlist\@endpefalse
}
\def\cprime{{\mathsurround0pt$'$}}
\newcommand{\setm}{\setminus}
\renewcommand{\emptyset}{\varnothing}
\def\vint{\mathop{\mathchoice%
          {\setbox0\hbox{$\displaystyle\intop$}\kern 0.22\wd0%
           \vcenter{\hrule width 0.6\wd0}\kern -0.82\wd0}%
          {\setbox0\hbox{$\textstyle\intop$}\kern 0.2\wd0%
           \vcenter{\hrule width 0.6\wd0}\kern -0.8\wd0}%
          {\setbox0\hbox{$\scriptstyle\intop$}\kern 0.2\wd0%
           \vcenter{\hrule width 0.6\wd0}\kern -0.8\wd0}%
          {\setbox0\hbox{$\scriptscriptstyle\intop$}\kern 0.2\wd0%
           \vcenter{\hrule width 0.6\wd0}\kern -0.8\wd0}}%
          \mathopen{}\int}
\newcommand{\Cp}{{C_p}}
\newcommand{\CpB}{{C_p^B}}
\DeclareMathOperator{\diam}{diam}
\DeclareMathOperator{\Div}{div}
\DeclareMathOperator{\capp}{cap}
\newcommand{\cp}{\capp_p}
\newcommand{\cpt}{\capp_{\pt}}
\newcommand{\cq}{\capp_q}
\newcommand{\ctwo}{\capp_2}
\newcommand{\cone}{\capp_1}
\newcommand{\grad}{\nabla}
\DeclareMathOperator{\dist}{dist}
\DeclareMathOperator{\interior}{int}
\DeclareMathOperator*{\essliminf}{ess\,lim\,inf}
\DeclareMathOperator*{\esslimsup}{ess\,lim\,sup}
\DeclareMathOperator*{\esssup}{ess\,sup}
\newcommand{\bdry}{\partial}
\newcommand{\loc}{_{\rm loc}}
\newcommand{\simge}{\gtrsim}
\newcommand{\simle}{\lesssim}
{\catcode`p =12 \catcode`t =12 \gdef\eeaa#1pt{#1}}      % Get slantfactor
\def\accentadjtext#1{\setbox0\hbox{$#1$}\kern   % Convert it with height
                \expandafter\eeaa\the\fontdimen1\textfont1 \ht0 }
\def\accentadjscript#1{\setbox0\hbox{$#1$}\kern % Convert it with height
                \expandafter\eeaa\the\fontdimen1\scriptfont1 \ht0 }
\def\accentadjscriptscript#1{\setbox0\hbox{$#1$}\kern   % Convert it with height
                \expandafter\eeaa\the\fontdimen1\scriptscriptfont1 \ht0 }
\def\accentadjtextback#1{\setbox0\hbox{$#1$}\kern       % Convert it with height
                -\expandafter\eeaa\the\fontdimen1\textfont1 \ht0 }
\def\accentadjscriptback#1{\setbox0\hbox{$#1$}\kern     % Convert it with height
                -\expandafter\eeaa\the\fontdimen1\scriptfont1 \ht0 }
\def\accentadjscriptscriptback#1{\setbox0\hbox{$#1$}\kern % Convert it with height
                -\expandafter\eeaa\the\fontdimen1\scriptscriptfont1 \ht0 }
\def\itoverline#1{{\mathsurround0pt\mathchoice
        {\rlap{$\accentadjtext{\displaystyle #1}
                \accentadjtext{\vrule height1.593pt}
                \overline{\phantom{\displaystyle #1}
                \accentadjtextback{\displaystyle #1}}$}{#1}}
        {\rlap{$\accentadjtext{\textstyle #1}
                \accentadjtext{\vrule height1.593pt}
                \overline{\phantom{\textstyle #1}
                \accentadjtextback{\textstyle #1}}$}{#1}}
        {\rlap{$\accentadjscript{\scriptstyle #1}
                \accentadjscript{\vrule height1.593pt}
                \overline{\phantom{\scriptstyle #1}
                \accentadjscriptback{\scriptstyle #1}}$}{#1}}
        {\rlap{$\accentadjscriptscript{\scriptscriptstyle #1}
                \accentadjscriptscript{\vrule height1.593pt}
                \overline{\phantom{\scriptscriptstyle #1}
                \accentadjscriptscriptback{\scriptscriptstyle #1}}$}{#1}}}}
\def\itunderline#1{{\mathsurround0pt\mathchoice
        {\rlap{$\underline{\phantom{\displaystyle #1}
                \accentadjtextback{\displaystyle #1}}$}{#1}}
        {\rlap{$\underline{\phantom{\textstyle #1}
                \accentadjtextback{\textstyle #1}}$}{#1}}
        {\rlap{$\underline{\phantom{\scriptstyle #1}
                \accentadjscriptback{\scriptstyle #1}}$}{#1}}
        {\rlap{$\underline{\phantom{\scriptscriptstyle #1}
                \accentadjscriptscriptback{\scriptscriptstyle #1}}$}{#1}}}}
\newcommand{\al}{\alpha}
\newcommand{\alp}{\alpha}
\newcommand{\be}{\beta}
\newcommand{\ga}{\gamma}
\newcommand{\dmu}{d\mu}
\newcommand{\de}{\delta}
\newcommand{\eps}{\varepsilon}
\newcommand{\la}{\lambda}
\newcommand{\s}{\sigma}
\newcommand{\om}{\omega}
\newcommand{\Om}{\Omega}
\newcommand{\z}{\zeta}
\newcommand{\p}{{$p\mspace{1mu}$}}
\newcommand{\R}{\mathbf{R}}
\newcommand{\Sp}{\mathbf{S}}
\newcommand{\Z}{\mathbf{Z}}
\newcommand{\N}{\mathbf{N}}
\newcommand{\Rt}{\widetilde{R}}
\newcommand{\limplus}{{\mathchoice{\raise.17ex\hbox{$\scriptstyle +$}}
                {\raise.17ex\hbox{$\scriptstyle +$}}
                {\raise.1ex\hbox{$\scriptscriptstyle +$}}
                {\scriptscriptstyle +}}}
\newcommand{\Np}{N^{1,p}}
\newcommand{\Nploc}{N^{1,p}\loc}
\newcommand{\pt}{{\tilde{p}}}
\newcommand{\gt}{\tilde{g}}
\newcommand{\rt}{\tilde{r}}
\newcommand{\lQo}{\itunderline{Q}_0}
\newcommand{\uQo}{\itoverline{Q}_0}
\newcommand{\lSo}{\itunderline{S}_0}
\newcommand{\uSo}{\itoverline{S}_0}
\newcommand{\lQi}{\itunderline{Q}_\infty}
\newcommand{\uQi}{\itoverline{Q}_\infty}
\newcommand{\lSi}{\itunderline{S}_\infty}
\newcommand{\uSi}{\itoverline{S}_\infty}
\newcommand{\lQ}{\itunderline{Q}}
\newcommand{\uQ}{\itoverline{Q}}
\newcommand{\loq}{{\underline{q}}}
\newcommand{\uq}{\overline{q}}
\newcommand{\po}{{p_0}}
\newcommand{\ro}{{\rho}}
\newcommand{\Ga}{\Gamma}
\newcommand{\Lp}{L^p}
\newcommand{\setcurrentlabel}[1]{\def\@currentlabel{#1}}
\numberwithin{equation}{section}
\newcommand{\imp}{\mathchoice{\quad \Longrightarrow \quad}{\Rightarrow}
                {\Rightarrow}{\Rightarrow}}
\newenvironment{ack}{\medskip{\it Acknowledgement.}}{}
\begin{document}

\authortitle{Anders Bj\"orn, Jana Bj\"orn
    and Juha Lehrb\"ack}
{Sharp capacity estimates for annuli in weighted $\R^n$ and in metric spaces}
\author{
Anders Bj\"orn \\
\it\small Department of Mathematics, Link\"opings universitet, \\
\it\small SE-581 83 Link\"oping, Sweden\/{\rm ;}
\it \small anders.bjorn@liu.se
\\
\\
Jana Bj\"orn \\
\it\small Department of Mathematics, Link\"opings universitet, \\
\it\small SE-581 83 Link\"oping, Sweden\/{\rm ;}
\it \small jana.bjorn@liu.se
\\
\\
Juha Lehrb\"ack \\
\it\small Department of Mathematics and Statistics, University of Jyv\"askyl\"a,\\
\it\small P.O. Box 35\/ \textup{(}MaD\/\textup{)}, FI-40014 University of Jyv\"askyl\"a, Finland\/{\rm ;}
\it \small juha.lehrback@jyu.fi
\\
}

\date{}

\maketitle

\noindent{\small
 {\bf Abstract}.
We obtain estimates for the nonlinear variational
capacity of annuli in weighted $\R^n$ and in
metric spaces. 
We introduce four different (pointwise)
exponent sets,
show that
they all play fundamental roles for capacity 
estimates, and
also demonstrate that whether an end point of an exponent
set is attained or not is important.
As a consequence of our estimates we obtain, 
for instance, criteria for
points to have zero (resp.\ positive) capacity.
Our discussion holds in rather general metric 
spaces, including Carnot groups and many manifolds, but it is just
as relevant on weighted $\R^n$.
Indeed, to illustrate the sharpness of 
our estimates, we give several
examples of radially weighted $\R^n$,
which are  
based on 
quasiconformality of radial 
stretchings in $\R^n$.
}

\bigskip

\noindent {\small \emph{Key words and phrases}:
annulus, capacity,  doubling measure,  exponent sets, Jacobian,  
metric space, 
Newtonian space, \p-admissible weight, Poincar\'e inequality, 
quasiconformal mapping, 
radial stretching, radial weight, 
reverse-doubling, sharp estimate, Sobolev capacity,
Sobolev space, upper gradient, variational capacity,
weighted $\R^n$.
}

\medskip

\noindent {\small Mathematics Subject Classification (2010):
Primary: 31C45; Secondary: 30C65, 30L99, 31B15, 31C15, 31E05.
}

\section{Introduction}

Our aim in this paper is to give sharp estimates for 
the variational \p-capacity of annuli in metric spaces. 
Such estimates play an important role for instance in the study of
singular solutions and Green functions for (quasi)linear equations 
in (weighted) Euclidean spaces and
in more general settings, such as subelliptic equations associated
with vector fields and on Heisenberg groups, 
see e.g.\ Serrin~\cite{Serrin}, 
Capogna--Danielli--Garofalo~\cite{CaDaGa2},
and Danielli--Garofalo--Marola~\cite{DaGaMa}
for discussion and applications.
Recall that analysis and nonlinear
potential theory (including capacities) 
have during the last two decades been developed on very general metric
spaces, including compact Riemannian manifolds and their Gromov--Hausdorff
limits, and Carnot--Carath\'eodory spaces.

Sharp capacity estimates depend in a crucial way on good bounds
for the (relative) measures of balls. 
For instance, recall that for $0<2r\le R$, the 
variational \p-capacity 
$\cp(B(x,r),B(x,R))$ of the annulus $B(x,R)\setm B(x,r)$
in (unweighted) $\R^n$ is  
comparable to $r^{n-p}$ if $p<n$ and to $R^{n-p}$ if $p>n$, see
e.g.\ Example~2.12 in Heinonen--Kilpel\"ainen--Martio~\cite{HeKiMa}.
In both cases, $r^n$ and $R^n$ are comparable to the Lebesgue measure of one of the balls
defining the annulus. 
For $p=n$, the \p-capacity contains a logarithmic term of the ratio $R/r$.
Thus, the dimension $n$ (or rather the way in which the Lebesgue measure scales on
balls with different radii) determines (together with \p) 
the form of the estimates for
the \p-capacity of annuli.

If $X=(X,d,\mu)$ is a metric space equipped with a doubling measure $\mu$ 
(i.e.\ $\mu(2B)\le C \mu(B)$ for all balls $B\subset X$), then an iteration of the
doubling condition 
shows that there exist $q>0$ and $C>0$ such that
\begin{equation*}
\frac{\mu(B(x,r))}{\mu(B(x,R))}\ge C\Bigl(\frac rR\Bigr)^q
\end{equation*}
for all $x\in X$ and $0<r<R$. In addition, a converse estimate, with some
exponent $0<q'\le q$, holds under the assumption 
that $X$ is connected
(see Section~\ref{sect-sets-of-exponents} for details).
Motivated by these observations, we  
introduce the following \emph{exponent sets} for $x\in X$:
\begin{align*}
  \lQo(x)
        & :=\Bigr\{q>0 : \text{there is $C_q$ so that } 
        \frac{\mu(B(x,r))}{\mu(B(x,R))}  \le C_q \Bigl(\frac{r}{R}\Bigr)^q 
        \text{ for } 0 < r < R \le 1
        \Bigl\}, \\
  \lSo(x)
          & :=\{q>0 : \text{there is $C_q$ so that } 
        \mu(B(x,r))  \le C_q r^q 
        \text{ for } 0 < r  \le 1
        \}, \\
  \uSo(x)
          & :=\{q>0 : \text{there is $C_q$ so that } 
        \mu(B(x,r))  \ge C_q r^q 
        \text{ for } 0 < r  \le 1
        \}, \\
  \uQo(x)
        & :=\Bigr\{q>0 : \text{there is $C_q$ so that } 
        \frac{\mu(B(x,r))}{\mu(B(x,R))}  \ge C_q \Bigl(\frac{r}{R}\Bigr)^q 
        \text{ for } 0 < r < R \le 1
        \Bigl\}.
\end{align*}
Here the subscript $0$ refers to the fact that only small radii are 
considered; we 
shall later define similar exponent sets with large radii as well.
In general, all of these 
sets can be different, as shown in 
Examples~\ref{ex1} and~\ref{ex-abcd-alt}. 

The above exponent sets turn out to be of fundamental importance for distinguishing
between the cases in which the sharp estimates for capacities are different,
in a similar way as the dimension in $\R^n$ does. 
Let us mention here that
Garofalo--Marola~\cite{GaMa} defined a pointwise dimension  
$q(x)$ (called $Q(x)$ therein) 
and established certain capacity estimates for the cases
$p<q(x)$, $p=q(x)$  and $p>q(x)$.  
In our terminology their $q(x)=\sup\lQ(x)$,
where $\lQ(x)$ is a global version of $\lQo(x)$, see 
Section~\ref{sect-sets-of-exponents}.
However, it turns out that the situation is in fact even more subtle than 
indicated in~\cite{GaMa}, 
since actually all of the above exponent sets are needed 
to obtain a complete picture of 
capacity estimates.
Our purpose is to provide a unified approach 
which not only covers (and in many cases improves) all the previous 
capacity estimates in the literature, but also takes into
account the cases that have been overlooked in the past. 
We also indicate via 
Propositions~\ref{prop-sharp} and~\ref{prop-sharp-S} and 
numerous examples that our 
estimates are both natural and, in most cases, optimal.  
In addition, we hope that our work offers clarity and transparency to the proofs
of the previously known results. 

The following are some of our main results.
Here and 
later we often drop $x$ from the notation of the exponent sets 
when the point is fixed,
and moreover 
write e.g.\  $B_r=B(x,r)$.
For simplicity, we state the results here under the standard assumptions of doubling
and a Poincar\'e inequality, but in fact less is needed, as explained below.
Throughout the paper, we write $a \simle b$ if there is an implicit
 constant $C>0$ such that $a \le Cb$, where $C$ is independent of the 
essential parameters involved. We also write $a \simge b$ if $b \simle a$,
and $a \simeq b$ if $a \simle b \simle a$.
In particular, in Theorems~\ref{thm:intro}
and~\ref{thm:borderline-intro} below
the implicit constants are independent of $r$ and $R$, but
depend on $R_0$.

\begin{thm}\label{thm:intro}
Let\/ $0< R_0 < \frac{1}{4} \diam X$, $1\le p<\infty$, and assume that the measure $\mu$ is doubling
and supports a \p-Poincar\'e inequality.
\begin{enumerate}
\item  \label{it-intro-1}
If $p\in \interior\lQ_0$, then
\begin{equation} \label{eq-it-1-intro}
\cp(B_r,B_{R})\simeq \frac{\mu(B_r)}{r^{p}}
\quad \text{whenever\/ } 0<2r \le R \le R_0. 
\end{equation}
\item  \label{it-intro-2}
If $p\in \interior\uQ_0$, then
\begin{equation} \label{eq-it-2-intro}
\cp(B_r,B_{R})\simeq \frac{\mu(B_{R})}{R^{p}}
\quad \text{whenever\/ } 0<2r \le R \le R_0. 
\end{equation}
\end{enumerate}
\bigskip
%
% \bigskip needed since enumerate ends with a display.
Moreover, if \eqref{eq-it-1-intro} holds,
then $p \in \lQ_0$,
while if \eqref{eq-it-2-intro} holds,
then $p \in \uQ_0$. 
\end{thm}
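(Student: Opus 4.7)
The argument has two halves: first, two universal upper bounds on $\cp(B_r,B_R)$ from explicit test functions that hold under doubling alone; second, matching lower bounds activated by the respective interior hypothesis via a telescoping chain. The ``Moreover'' clauses then follow by comparing the asymptotic with the ``wrong'' universal upper bound. For the upper bounds, the Lipschitz cut-offs $u_1(y)=\min\{1,(2-d(x,y)/r)_+\}$ and $u_2(y)=\min\{1,2(1-d(x,y)/R)_+\}$ are both admissible for the condenser $(B_r,B_R)$ (since $2r\le R$), with upper gradients bounded by $1/r$ resp.\ $2/R$ and supported in $B_{2r}\setm B_r$ resp.\ $B_R\setm B_{R/2}$; doubling then gives
\[
\cp(B_r,B_R)\simle \mu(B_r)/r^p\quad\text{and}\quad \cp(B_r,B_R)\simle \mu(B_R)/R^p.
\]

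\emph{Matching lower bounds.} Let $u$ be admissible with $p$-weak upper gradient $g$, and set $I=\int g^p\,d\mu$. Fix a Lebesgue point $x_0\in B_r$ (so $u(x_0)=1$) and telescope along the dyadic chain $B^{(j)}=B(x_0,2^jr)$, $j=0,\dots,k$, with $2^kr\simeq R$ (taken slightly larger than $R$ so that $B^{(k)}$ strictly contains $B_R$). The $p$-Poincar\'e inequality on each link $B^{(j)}\subset B^{(j+1)}$, together with H\"older and the doubling of consecutive balls, yields the master inequality
\[
|1-u_{B^{(k)}}|\simle I^{1/p}\sum_{j=1}^{k}\frac{2^jr}{\mu(B^{(j)})^{1/p}},
\]
while reverse doubling---which holds under the standing connectivity assumption together with doubling---keeps $u_{B^{(k)}}\le \mu(B_R)/\mu(B^{(k)})$ bounded strictly below $1$. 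In case~\ref{it-intro-1}, pick $q\in\lQ_0$ with $q>p$; the bound $\mu(B^{(j)})\simge 2^{jq}\mu(B_r)$ turns the series into a geometric one with ratio $2^{1-q/p}<1$, summing to $\simle r\mu(B_r)^{-1/p}$, and so $I\simge \mu(B_r)/r^p$. In case~\ref{it-intro-2}, pick $q\in\uQ_0$ with $q<p$; then $\mu(B^{(j)})\simge(2^jr/R)^q\mu(B_R)$ and
\[
\sum_{j=1}^{k}\frac{2^jr}{\mu(B^{(j)})^{1/p}}\simle R^{q/p}\mu(B_R)^{-1/p}\sum_{j=1}^{k}(2^jr)^{1-q/p}\simeq R/\mu(B_R)^{1/p},
\]
the geometric sum (ratio $2^{1-q/p}>1$) now being dominated by its terminal term at $2^kr\simeq R$; hence $I\simge\mu(B_R)/R^p$.

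\emph{``Moreover'' clauses and main obstacle.} If \eqref{eq-it-1-intro} holds, then combining it with the second universal upper bound gives $\mu(B_r)/r^p\simeq\cp(B_r,B_R)\simle\mu(B_R)/R^p$, which rearranges to $\mu(B_r)/\mu(B_R)\simle (r/R)^p$, i.e.\ $p\in\lQ_0$; the deduction of $p\in\uQ_0$ from \eqref{eq-it-2-intro} is symmetric. The upper bounds and the ``Moreover'' part are essentially formal; the real content is the master chain inequality, where the \emph{strict} inequality $q>p$ (resp.\ $q<p$) is exactly what makes the geometric sum converge on the appropriate end---summable from the outer side in case~\ref{it-intro-1}, dominated by its outer term in case~\ref{it-intro-2}. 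This same strictness is also why the converses in the ``Moreover'' part only recover membership in $\lQ_0$ (resp.\ $\uQ_0$), and not in their interiors.
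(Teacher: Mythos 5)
The overall strategy of your proof matches the paper: explicit Lipschitz cut-offs for the two universal upper bounds (cf.\ Proposition~\ref{prop:simple upper bound}), a telescoping Poincar\'e chain for the matching lower bounds (cf.\ Lemma~\ref{lem:chain estimate} and Proposition~\ref{prop:low bounds}), and comparison of the claimed asymptotic against the ``wrong'' universal upper bound for the necessity clauses (cf.\ Proposition~\ref{prop-sharp}). Your device of using reverse doubling to keep $u_{B^{(k)}}$ bounded strictly below $1$ is a reasonable variant of the paper's method of telescoping further out to the annulus $A = B_{\tau R}\setminus B_R$, where $u_A=0$ identically for every $u\in N^{1,p}_0(B_R)$; these accomplish the same thing.

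However, the starting end of your chain has a genuine gap. You centre the chain at a Lebesgue point $x_0 \in B_r$ and write the master inequality with $1$ on the left as though the chain delivered $|u(x_0) - u_{B^{(k)}}|$. But the finite telescoping sum $\sum_{j=1}^{k}|u_{B^{(j-1)}}-u_{B^{(j)}}|$ only bounds $|u_{B^{(0)}} - u_{B^{(k)}}|$, not $|u(x_0) - u_{B^{(k)}}|$, and $u_{B^{(0)}} \ge 1$ does \emph{not} follow from admissibility, since $B^{(0)} = B(x_0,r)$ is not contained in $B_r=B(x,r)$ when $x_0\ne x$. Passing from $u(x_0)$ to $u_{B^{(0)}}$ would require the infinite inward tail $\sum_{j<0}|u_{B^{(j)}} - u_{B^{(j+1)}}|$; estimating each such term by the Poincar\'e inequality gives $2^{j}r\,\mu(B(x_0,2^{j}r))^{-1/p}\,I^{1/p}$, and this series can diverge as soon as the pointwise growth exponent of $\mu$ at $x_0$ exceeds $p$ --- which the hypotheses do not forbid, since $p\in\interior\lQo(x)$ (resp.\ $\interior\uQo(x)$) constrains the measure only at the fixed centre $x$, not at $x_0$. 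The clean fix, and the route the paper takes, is to centre the chain at $x$ itself, take $B^{(0)}=B(x,r)=B_r$, and use that $u_{B_r}\ge 1$ for every admissible $u$: no Lebesgue points, no inward tail, and the measure estimates you then need are exactly at $x$, where the $\lQo$/$\uQo$ hypotheses live. With that single change your argument becomes essentially the paper's.
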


Here and elsewhere  $\interior Q$ denotes the interior of a set $Q$.
Already unweighted $\R^n$ shows that 
$r$ needs to be bounded away from $R$ 
in order to have the upper bounds in~\eqref{eq-it-1-intro}
and~\eqref{eq-it-2-intro}
(hence $2r\le R$ above), and that the
lower estimate in~\eqref{eq-it-1-intro} (resp.\ \eqref{eq-it-2-intro}) does not hold in general
when $p \ge \sup\lQo$ (resp.\ $p \le \inf\uQo$), 
even if the borderline exponent is in the respective set. 
In these borderline cases $p=\max\lQo$ and $p=\min\uQo$ 
we instead obtain the following estimates
involving logarithmic factors.

\begin{thm}\label{thm:borderline-intro}
Let\/ $0< R_0 < \frac{1}{4} \diam X$, and 
assume that the measure $\mu$ is doubling
and supports a $\po$-Poincar\'e inequality for some\/ 
$1\leq \po <p$.

\begin{enumerate}
\item \label{it3-a-thm-intro}
If $p=\max\lQo$ and\/ $0<2r \le R \le R_0$, 
then
\begin{equation} \label{eq-it3-a-thm-intro}
\frac{\mu(B_{r})}{r^{p}}\biggl(\log\frac R r\biggr)^{1-p} \simle 
\cp(B_r,B_{R}) \simle \frac{\mu(B_{R})}{R^{p}}\biggl(\log\frac R r\biggr)^{1-p}.
\end{equation}

\item \label{it3-b-thm-intro}
 If $p=\min\uQo$ and\/ $0<2r \le R \le R_0$,
then
\begin{equation} \label{eq-it3-b-thm-intro}
\frac{\mu(B_{R})}{R^{p}}\biggl(\log\frac R r\biggr)^{1-p} \simle
\cp(B_r,B_{R}) \simle \frac{\mu(B_r)}{r^{p}}\biggl(\log\frac R r\biggr)^{1-p}.
\end{equation}
\end{enumerate}
\bigskip
%
% \bigskip needed since enumerate ends with a display.
Moreover,
if the lower bound in~\eqref{eq-it3-a-thm-intro} holds, then
$p\le\sup{\lQo}$, 
and if the lower bound in~\eqref{eq-it3-b-thm-intro} holds, then
$p\ge\inf{\uQo}$.
\end{thm}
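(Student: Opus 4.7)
The plan is to establish the upper and lower bounds by two separate techniques—an explicit logarithmic test function for the upper estimates and a Poincar\'e-based chaining argument for the lower estimates—and then to derive the ``moreover'' assertions by comparison with cruder universal upper bounds.

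For the upper bounds in (a) and (b) the natural candidate is the logarithmic cutoff
\[
u(y)=\min\!\Bigl(1,\,\frac{(\log(R/d(x,y)))_+}{\log(R/r)}\Bigr),
\]
which is admissible for $\cp(B_r,B_R)$ and has $g(y)\simeq(d(x,y)\log(R/r))^{-1}\chi_{B_R\setm B_r}(y)$ as an upper gradient. Decomposing the annulus dyadically into $B_{2^{-k}R}\setm B_{2^{-k-1}R}$ for $0\le k\le N\simeq\log(R/r)$, the computation of $\int g^p\,d\mu$ reduces to controlling $\sum_k \mu(B_{2^{-k}R})/(2^{-k}R)^p$. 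In case (a), the hypothesis $p\in\lQo$ yields $\mu(B_{2^{-k}R})\simle 2^{-kp}\mu(B_R)$, so each summand is $\simle\mu(B_R)/R^p$ and the upper bound in \eqref{eq-it3-a-thm-intro} drops out. In case (b), $p\in\uQo$ gives $\mu(B_{2^{-k}R})\simle(2^{-k}R/r)^p\mu(B_r)$, leading to the upper bound in \eqref{eq-it3-b-thm-intro}.

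For the lower bounds I would run the standard chaining argument on the concentric balls $B_k:=B(x,2^{-k}R)$ for $0\le k\le N$, prepending a few larger balls so that the starting average is strictly less than $1$ by doubling. For any admissible pair $(u,g_u)$ for $\cp(B_r,B_R)$, the $\po$-Poincar\'e inequality combined with H\"older (using $\po<p$) yields
\[
|u_{B_{k+1}}-u_{B_k}|\simle 2^{-k}R\,\Bigl(\mu(B_k)^{-1}\!\int_{\la B_k}\!g_u^p\,d\mu\Bigr)^{1/p},
\]
and since the telescoping sum of these differences is $\simge 1$, a weighted H\"older inequality with weights $w_k:=\mu(B_k)/(2^{-k}R)^p$ produces
\[
1\simle\Bigl(\int g_u^p\,d\mu\Bigr)^{1/p}\Bigl(\sum_{k=0}^{N-1} w_k^{-1/(p-1)}\Bigr)^{(p-1)/p},
\]
the first factor being gathered via bounded overlap of the dilated balls $\la B_k$. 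The decisive step is then to evaluate $\sum_k w_k^{-1/(p-1)}$: in case (a) the endpoint condition $p=\max\lQo$ forces $\mu(B_k)\simge(2^{-k}R/r)^p\mu(B_r)$, making each term comparable to $r^{p/(p-1)}/\mu(B_r)^{1/(p-1)}$, so the sum is $\simeq N\,r^{p/(p-1)}/\mu(B_r)^{1/(p-1)}$ and the lower bound in \eqref{eq-it3-a-thm-intro} follows; in case (b), $p=\min\uQo$ symmetrically makes each term comparable to $R^{p/(p-1)}/\mu(B_R)^{1/(p-1)}$, yielding the lower bound in \eqref{eq-it3-b-thm-intro}. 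The main obstacle is exactly this sharp evaluation: the endpoint attainment is essential, since knowing only that $p-\eps\in\lQo$ for every $\eps>0$ would introduce an extra $2^{k\eps/(p-1)}$-factor in each term and destroy the optimal $N^{1-p}$-decay.

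For the ``moreover'' assertions I would combine the obtained lower bounds with the universal upper bounds $\cp(B_r,B_R)\simle\mu(B_R)/R^p$ (linear cutoff supported on $B_R\setm B_r$) and $\cp(B_r,B_R)\simle\mu(B_r)/r^p$ (linear cutoff on $B_{2r}\setm B_r$, together with doubling), both valid whenever $2r\le R$. Combining the first with the lower bound in \eqref{eq-it3-a-thm-intro} gives $\mu(B_r)/\mu(B_R)\simle(r/R)^p(\log R/r)^{p-1}$, and since the logarithmic factor is dominated by $(R/r)^\eps$ for every $\eps>0$, one obtains $p-\eps\in\lQo$, hence $p\le\sup\lQo$; combining the second with the lower bound in \eqref{eq-it3-b-thm-intro} similarly yields $p+\eps\in\uQo$ for every $\eps>0$, so $p\ge\inf\uQo$.
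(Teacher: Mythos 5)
Your upper-bound argument is correct and is essentially the proof of Proposition~\ref{prop:upper bounds borderline}: the logarithmic cutoff, a dyadic decomposition of the annulus, and the membership $p\in\lQo$ or $p\in\uQo$ to control the partial sums. Your treatment of the ``moreover'' part also matches the paper's Proposition~\ref{prop-sharp} and is fine.

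The lower-bound argument, however, has a genuine gap, and it is precisely where the improved exponent $\po<p$ is supposed to do its work. After obtaining the chain estimate you propose to apply H\"older to the telescoping sum and then ``gather'' $\sum_{k}\int_{\la B_k}g_u^p\,d\mu$ into $\int g_u^p\,d\mu$ via bounded overlap. But the balls $B_k=B(x,2^{-k}R)$ are nested, and so are $\la B_k$: a point at distance $\rho$ from $x$ lies in $\la B_k$ for every $k\lesssim \log_2(\la R/\rho)$, so the overlap grows linearly in $N\simeq\log(R/r)$ rather than being bounded. The only unconditional bound is $\sum_k\int_{\la B_k}g_u^p\,d\mu\le (N+1)\int_{B_R}g_u^p\,d\mu$, and carrying that extra factor $N$ through your computation yields $\cp(B_r,B_R)\gtrsim \mu(B_r)r^{-p}(\log(R/r))^{-p}$, i.e.\ the weaker estimate of Proposition~\ref{prop-p=1}, not the stated $(\log(R/r))^{1-p}$. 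Moreover, notice that you discard the strength of the $\po$-Poincar\'e inequality at the very first step: passing from the $L^{\po}$-average of $g_u$ to the $L^p$-average by H\"older reduces you to what one would get from a plain $p$-Poincar\'e inequality, for which $(\log(R/r))^{-p}$ is all that can be proved in general.

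The paper's Proposition~\ref{prop:low bounds borderline-new} keeps the $L^{\po}$-averages, applies H\"older for sums (with exponents $\po$ and $\po/(\po-1)$) \emph{before} touching the integrals, writes each $\int_{\la B^k}g_u^{\po}\,d\mu$ as a sum over the disjoint annuli $A_j=\la B^j\setm \la B^{j-1}$, interchanges the order of summation, and only then applies H\"older for integrals with exponents $p/\po$ and $p/(p-\po)$. The crucial gain is that the inner geometric series $\sum_{k\ge j}(r_j/r_k)^{q(1-\po/p)}$ converges because $1-\po/p>0$, which is exactly where $\po<p$ enters; the disjointness of the $A_j$ replaces the bounded-overlap claim. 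Without this reordering the sharp $(\log(R/r))^{1-p}$ decay is not attainable.
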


See also \eqref{eq-upper-1} and \eqref{eq-upper-2} for
improvements of
the upper estimates of Theorem~\ref{thm:borderline-intro}.
Actually, 
Theorem~\ref{thm:borderline-intro}\,\ref{it3-a-thm-intro}
holds for all $p\in\lQo$ (resp.\ \ref{it3-b-thm-intro} for all $p\in\uQo$), 
but for $p$ in 
the interior of the respective exponent 
sets Theorem~\ref{thm:intro} gives better
estimates.
Let us also mention that 
for $p$ in between the $Q$-sets we obtain yet other
estimates depending on how close $p$ is to 
the corresponding $Q$-set, see Propositions~\ref{prop:simple upper bound}
and~\ref{prop:low bounds beyond borderline}.
Also these
estimates are sharp, as shown by 
Proposition~\ref{prop-sharp}.

We give related capacity estimates in terms of the
$S$-sets as well. 
In particular, we obtain the following 
almost characterization of when points have
zero capacity.
Here $\Cp(E)$ is the Sobolev capacity of $E\subset X$.

\begin{prop} \label{prop-zero-cap-intro}
Assume that $X$ is complete and 
that $\mu$ is doubling
and supports a \p-Poincar\'e inequality.
Let $B \ni x$ be a ball with $\Cp(X \setm B)>0$.

If\/ $1\le p\notin \uSo$ or\/ $1 < p \in \lSo$,
then $\Cp(\{x\})=\cp(\{x\},B)=0$.

Conversely, if 
$p\in \interior \uSo$, then 
$\Cp(\{x\})>0$ and\/ $\cp(\{x\},B)>0$.
\end{prop}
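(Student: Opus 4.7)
My plan is to establish the two zero-capacity implications by exhibiting admissible test functions with vanishing Sobolev energy, and the converse by a Poincar\'e-telescoping argument. A preliminary reduction used in both directions: if $\Cp(\{x\})=0$ then $\cp(\{x\},B)=0$, since any minimizing sequence for $\Cp(\{x\})$ can be multiplied by a fixed Lipschitz cutoff supported in $B$ and equal to $1$ near $x$ with only a bounded extra contribution to the upper-gradient energy. It therefore suffices to prove $\Cp(\{x\})=0$ in the first part and $\cp(\{x\},B)>0$ in the second (since $\cp(\{x\},B)>0$ forces $\Cp(\{x\})>0$ by contrapositive).

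For $1\le p\notin\uSo$, the condition supplies radii where $\mu(B_r)/r^p$ is arbitrarily small; since doubling keeps this ratio bounded below on $[\de,1]$ for each $\de>0$, there is a sequence $r_k\to 0$ with $\mu(B_{r_k})/r_k^p\to 0$. The Lipschitz cutoff $u_k(y)=\max\{0,1-d(x,y)/r_k\}$ has upper gradient $g_k=\chi_{B_{r_k}}/r_k$, so
\[
\int(u_k^p+g_k^p)\,d\mu\le\mu(B_{r_k})+\frac{\mu(B_{r_k})}{r_k^p}\longrightarrow 0,
\]
using that $p\ge 1$ and $r_k\le 1$ force $\mu(B_{r_k})\to 0$ once $\mu(B_{r_k})/r_k^p\to 0$. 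For $1<p\in\lSo$ the simple cutoff fails in the gradient term, so I instead use the logarithmic cutoff
\[
u_{r,R}(y)=\min\Bigl\{1,\max\Bigl\{0,\tfrac{\log(R/d(x,y))}{\log(R/r)}\Bigr\}\Bigr\},\qquad 0<r<R\le 1,
\]
whose upper gradient on $B_R\setm B_r$ is bounded by $(d(x,y)\log(R/r))^{-1}$. A dyadic decomposition of the annulus together with $\mu(B_s)\le Cs^p$ yields $\int g^p\,d\mu\simle(\log(R/r))^{1-p}\to 0$ as $R/r\to\infty$ (since $p>1$); choosing $R=R_k\to 0$ with $r_k\ll R_k$ also forces $\int u^p\,d\mu\le\mu(B_{R_k})\le CR_k^p\to 0$, so $\Cp(\{x\})=0$.

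For the converse, assume $p\in\interior\uSo$ and fix $\eps>0$ with $\mu(B_r)\simge r^{p-\eps}$ for $0<r\le 1$. Let $u$ be admissible for $\cp(\{x\},B)$ (so $u(x)=1$ and $u=0$ outside $B$), and set $r_k=2^{-k}r_0$ for some $r_0$ comparable to the radius of $B$. The $\po$-Poincar\'e inequality and doubling give
\[
|u_{B_{r_{k+1}}}-u_{B_{r_k}}|\simle r_k\Bigl(\vint_{\la B_{r_k}}g^p\,d\mu\Bigr)^{1/p},
\]
and Lebesgue differentiation yields $u_{B_{r_k}}\to 1$. Absorbing the boundary term $u_{B_{r_0}}$ by a short telescoping up to scales where $u\equiv 0$ (or by a Sobolev--Poincar\'e inequality exploiting $\Cp(X\setm B)>0$) gives
\[
1\simle\sum_{k=0}^\infty r_k\Bigl(\vint_{\la B_{r_k}}g^p\,d\mu\Bigr)^{1/p}.
\]
H\"older's inequality with conjugate exponents $(p',p)$, the bounded overlap of the dilates $\la B_{r_k}$, and $\mu(B_{r_k})\simge r_k^{p-\eps}$ (valid because $r_k\to 0$) transform this into
\[
1\simle\Bigl(\sum_k r_k^{p'\eps/p}\Bigr)^{1/p'}\Bigl(\int g^p\,d\mu\Bigr)^{1/p},
\]
whose first factor is a finite geometric series in $r_0$ and $\eps$. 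Hence $\int g^p\,d\mu\simge 1$ uniformly in $u$, so $\cp(\{x\},B)>0$. The main obstacle I foresee is the clean handling of the boundary term $u_{B_{r_0}}$: one must invoke $\Cp(X\setm B)>0$ via a Poincar\'e--Sobolev estimate to guarantee that a definite fraction of the unit jump from $u(x)=1$ to $u\equiv 0$ outside $B$ survives in the dyadic telescoping sum, rather than being hidden in $u_{B_{r_0}}$.
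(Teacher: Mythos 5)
The first part of your argument (establishing $\Cp(\{x\})=\cp(\{x\},B)=0$ when $1\le p\notin\uSo$ or $1<p\in\lSo$) is correct and follows essentially the same route as the paper: a simple Lipschitz cutoff for the case $p\notin\uSo$ and a logarithmic cutoff for the case $p\in\lSo$ (the latter is the paper's Proposition~\ref{prop:upper with S}\,\ref{it-3-upS}).

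The converse direction, however, contains a genuine gap. You take $u$ admissible for $\cp(\{x\},B)$, so $u(x)\ge1$, and then assert that ``Lebesgue differentiation yields $u_{B_{r_k}}\to 1$.'' This is false in general: the Lebesgue differentiation theorem gives $u_{B_{r_k}}\to u(x)$ only at Lebesgue points of $u$, and for Newtonian functions the Lebesgue point property holds quasieverywhere, not everywhere. Precisely in the situation you are trying to rule out, namely when $\Cp(\{x\})=0$, the point $x$ need not be a Lebesgue point: for instance $u=\chi_{\{x\}}$ lies in $\Np_0(B)$ with $g_u=0$ a.e.\ whenever $\Cp(\{x\})=0$, it is admissible, yet $u_{B_{r_k}}=0$ for all $k$. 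So your chain $1\simle\sum_k r_k(\vint_{\la B_{r_k}}g^p\,d\mu)^{1/p}$ cannot be started, and the ``obstacle'' you flag (the boundary term $u_{B_{r_0}}$) is not the real one. The paper circumvents this by arguing indirectly: if $\cp(\{x\},B)$ were zero, then $\Cp(\{x\})=0$ (Proposition~\ref{prop-Cp=0<=>cp=0}), and the outer-capacity result Proposition~\ref{prop-zero-cap-capacitable} then produces an \emph{open} set $U\ni x$ with $\cp(U,B)$ arbitrarily small; since $U$ contains a ball $B_r$, this contradicts the lower bound $\cp(B_r,B_R)\simge R^{q-p}$ from Proposition~\ref{prop:S and capacity}. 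In other words, the lower capacity bound is applied to a genuine ball $B_r\subset U$ (where Lemma~\ref{lem:chain estimate} is legitimately available because admissible functions equal $1$ on all of $B_r$, not just at $x$), rather than to the singleton. Your telescoping estimate would be salvageable only if you first established that $\cp(\{x\},B)$ can be computed using continuous test functions, as the paper notes in Remark~\ref{rmk-Q-vs-Q0} following Proposition~\ref{prop:S and capacity}; without that input the argument is circular.
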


In the remaining borderline case, when $p = \inf \uSo \notin \lSo$, 
we show 
that the capacity can be either zero or 
nonzero, depending on the situation,
and thus the $S$-sets are not refined enough to give a complete characterization.

We also obtain similar results in terms of the $S_\infty$-sets,
which 
can be used to determine if the space $X$ is 
\p-parabolic or \p-hyperbolic; see Section~\ref{sect-S} for details.

For most of our estimates it is actually enough to require
that $\mu$ is both doubling and \emph{reverse-doubling} 
at the point $x$, and that a
Poincar\'e inequality holds for all balls centred at $x$.
Moreover, Poincar\'e inequalities and reverse-doubling
are only needed when
proving the lower bounds for capacities.
It is however worth pointing out that the examples
showing the sharpness of our estimates 
are based on \p-admissible weights on $\R^n$, and
so, even though our results hold in very general metric spaces,
it is essential to distinguish
the cases and define the exponent sets, as we do,
already in weighted $\R^n$. We construct our examples
with the help of a general machinery concerning radial
weights, explained 
in Section~\ref{sect:weights}.

Let us now give a brief account on some of the earlier results in the literature.
On unweighted $\R^n$, where $\lQo=\lSo=(0,n]$ and $\uQo=\uSo=[n,\infty)$, 
similar estimates (and precise calculations) are
well known, see e.g.\ Example~2.11 
in Heinonen--Kilpel\"ainen--Martio~\cite{HeKiMa},
which also contains an extensive treatise of potential theory on weighted $\R^n$,
including integral estimates for $A_p$-weighted capacities
with $p>1$
(Theorems~2.18 and~2.19 therein).
Theorem~3.5.6 in Turesson~\cite{Tur} provides essentially our
estimates for $p=1$ and $A_1$-weighted capacities in $\R^n$.
Estimates for general weighted Riesz capacities in $\R^n$ 
(including those equivalent to our capacities) were in somewhat 
different terms given in Adams~\cite[Theorem~6.1]{adams86}.

If the radii of the balls $B_r$ and $B_R$ are comparable, say $R=2r$, then
it is well known that the estimate $\cp(B_r,B_{2r})\simeq \mu(B_r)r^{-p}$ holds
(with implicit constants independent of $x$)
in metric spaces satisfying the doubling condition and a \p-Poincar\'e inequality,
see e.g.\ \cite[Lemma~2.14]{HeKiMa} for
weighted $\R^n$ and
J.~Bj\"orn~\cite[Lemma~3.3]{JBIll} or
Bj\"orn--Bj\"orn~\cite[Proposition~6.16]{BBbook}. 

Garofalo--Marola~\cite[Theorems~3.2 and~3.3]{GaMa} obtained 
essentially part~\ref{it-intro-1} of our 
Theorem~\ref{thm:intro} using an approach different 
from ours.
For the case $p = q(x):=\sup{\lQ(x)}$ they also gave 
estimates which are similar to
part~\ref{it3-a-thm-intro} of Theorem~\ref{thm:borderline-intro}.
However, they implicitly require that $q(x) \in {\lQ(x)}$ 
(i.e.\ $q(x)=\max\lQ(x)$) in their proofs,
and their estimates may actually fail if $q(x)\notin{\lQ}(x)$, as shown by 
Example~\ref{ex-log-weights}\,(c) below; the
same comment applies to their estimates in the case $p > q(x)$ as well.
There also seems
to be a slight problem in 
the proof of their lower bounds, 
since
the second displayed line at the beginning of the proof of Theorem~3.2 
in~\cite{GaMa} does not in general follow from the first line, as can be seen by 
considering e.g.\ $u(x)=\max\{0,\min\{1,1+j(r-|x|)\}\}$ in $\R^n$ and letting $j\to\infty$.
Instead, this estimate can be derived directly from a 1-Poincar\'e inequality (see 
M\"ak\"al\"ainen~\cite{Makalainen}), which is a stronger assumption than the
\p-Poincar\'e inequality assumed in~\cite{GaMa} (and in the present work).

Also Adamowicz--Shanmugalingam~\cite{adsh} have given related estimates
in metric spaces. They state their results in terms of the \p-modulus of curve families,
but it is known
that the \p-modulus coincides with the variational \p-capacity,
provided that $X$ is complete and $\mu$
is doubling and supports a \p-Poincar\'e inequality, see e.g.\ 
Heinonen--Koskela~\cite{HeKo98}, Kallunki--Shanmugalingam~\cite{KaSh} and
Adamowicz--Bj\"orn--Bj\"orn--Shan\-mu\-ga\-lin\-gam~\cite{ABBSprime}.
In the setting considered in~\cite{adsh} this equivalence is not known in general.
While it is always true that the \p-modulus is majorized by 
the variational \p-capacity, the converse is only
known under the assumption of a \p-Poincar\'e inequality, which is not
required for the upper bounds in~\cite{adsh} nor here.
At the same time, the test functions in~\cite{adsh} are admissible also for $\cp$,
showing that their estimates apply also to the variational \p-capacity.
For $p\in\interior\lQo$, 
Theorem~3.1 in~\cite{adsh} provides an upper bound that can be
seen to be weaker than~\eqref{eq-it-1-intro}. 
In the borderline case $p=\max\lQ_0$ (when it is attained), the upper estimate~(3.6) 
in~\cite{adsh} coincides with our~\eqref{eq-it3-a}.
Under the assumption that the space $X$ is Ahlfors $Q$-regular
and supports a \p-Poincar\'e inequality, they also prove lower bounds for capacities. 
For $p>Q$, the lower bound in~\cite[Theorem~4.3]{adsh}
coincides with the one in 
Theorem~\ref{thm:intro}\,\ref{it-intro-2}, but for $p\le Q$
the lower bound in~\cite[Theorem~4.9]{adsh}
is weaker than our estimates~\eqref{eq-it-1-intro} and~\eqref{eq-it3-a-thm-intro}.

Neither \cite{adsh} nor \cite{GaMa} contain any results
similar to ours for 
$p \in \uQo$, 
or in terms of $q \in \uQo$ for $p\notin\uQo$,
or involving the $S$-sets.

As mentioned above, 
\p-capacity and \p-modulus estimates are closely related,
and our estimates trivially give estimates for the \p-modulus in all
cases when they coincide, 
e.g.\ when 
$X$ is complete and $\mu$
is doubling and supports a \p-Poincar\'e inequality,
see above. 
Moreover, our upper estimates are trivially upper 
bounds of the \p-modulus in all cases.
We do not know if our lower estimates of the capacity are also
lower bounds for the \p-modulus, but neither
do we know of any example when the \p-modulus is 
strictly smaller than the \p-capacity.

Let us also mention that
earlier capacity estimates in Carnot groups and Carnot--Carath\'eodory spaces
can be found in Heinonen--Holopainen~\cite{HeHol} and 
in Capogna--Danielli--Garofalo~\cite{CaDaGa2}, respectively.
In~\cite{CaDaGa2}, the estimates are then applied to yield information on the
behaviour of singular solutions of certain quasilinear equations near the
singularity; see also Danielli--Garofalo--Marola~\cite{DaGaMa} for related
results in more general settings.  
In addition, Holopainen--Koskela~\cite{HoKo} provided a lower bound 
for the variational capacity in terms of the volume growth in Riemannian manifolds, 
as well as some related estimates in general metric spaces,
which in turn are related to the parabolicity and hyperbolicity of the
space.
Capacities defined by nonlinear potentials on homogeneous groups were
considered by Vodop{\cprime}yanov~\cite{Vodop} and some estimates in terms of
$A_p$-weights  were given in Proposition~2 therein.

The outline of the paper is as follows:
In Section~\ref{sect-sets-of-exponents}
we introduce some basic terminology and discuss
the exponent sets under consideration in this paper,
while in Section~\ref{sect-ex} we give some key examples
demonstrating various possibilities for the exponent sets.
These examples will later, in Section~\ref{sect-sharpness},
be used to show sharpness of our estimates.

In Section~\ref{sect-metric} we
introduce the necessary background for metric space analysis,
such as capacities and Newtonian (Sobolev) spaces based
on upper gradients.
Towards the end of the section we obtain a few new results and also 
the basic estimate used to obtain all our lower capacity bounds
(Lemma~\ref{lem:chain estimate}).

Sections~\ref{sect-5}--\ref{sect-S} are all 
devoted to the various capacity estimates.
In Section~\ref{sect-5} we obtain upper bounds, 
which are easier to obtain than lower bounds and in particular
require less assumptions on the space.
Lower bounds related to the $Q$-sets are established 
in Sections~\ref{sect:interior} and~\ref{sect:borderline},
the latter containing some more involved borderline cases,
while in Section~\ref{sect-S} we
study (upper and lower) estimates in terms of the $S$-sets
and in particular prove 
Proposition~\ref{prop-zero-cap-intro}
and the parabolicity/hyperbolicity results mentioned above.

The sharpness of most of our estimates
(but for some borderline cases)
is demonstrated in Section~\ref{sect-sharpness}.
Here we extend our discussion of the examples already introduced
in Section~\ref{sect-ex} using the 
capacity formula for radial weights on $\R^n$ given in
Proposition~\ref{prop-cap-formula-radial}.
This formula enables us to 
compute explicitly
the capacities in the examples, and thus we can
make comparisons with the bounds given by the more 
general estimates from Sections~\ref{sect-5}--\ref{sect-S}. 
We also obtain  stronger and more theoretical
sharpness results in Propositions~\ref{prop-sharp}
and~\ref{prop-sharp-S}.

The final Section~\ref{sect:weights} is devoted to proving 
the capacity formula mentioned above, and along the way we obtain
some new results on quasiconformality of radial stretchings
and on \p-admissibility of radial weights.

\begin{ack}
A.B.\ and J.B.\
were supported by the Swedish Research Council.
J.L.\ was supported by the Academy of Finland (grant no.\ 252108) and the 
V\"ais\"al\"a Foundation of the Finnish Academy of Science and Letters.
Part of this research was done during several visits
of J.L.\ 
to Link\"opings universitet in 2012--13,
and while A.B.\ and J.B.\ visited Institut Mittag-Leffler in 2013.
We wish to thank these institutions for their 
kind hospitality.
\end{ack}

\section{Exponent sets} 
\label{sect-sets-of-exponents}

We assume throughout the paper that $1 \le p<\infty$ 
and that $X=(X,d,\mu)$ is a metric space equipped
with a metric $d$ and a positive complete  Borel  measure $\mu$ 
such that $0<\mu(B)<\infty$ for all balls $B \subset X$. We 
adopt the convention that balls are nonempty and open.
The $\sigma$-algebra on which $\mu$ is defined
is obtained by the completion of the Borel $\sigma$-algebra.
It follows that $X$ is separable.

\begin{deff}
We say that the measure
$\mu$ is \emph{doubling at $x$}, if there 
is a constant $C>0$ such that whenever $r>0$, we have
\begin{equation}\label{eq:doubling-x0}
  \mu(B(x,2r))\le C \mu(B(x,r)).
\end{equation}
Here $B(x,r)=\{y \in X : d(x,y)<r\}$.
If \eqref{eq:doubling-x0} holds with the same constant $C>0$
for all $x \in X$, we say that $\mu$ is 
\emph{\textup{(}globally\/\textup{)} doubling}.
\end{deff}

The global doubling condition is often assumed in the metric space literature, but for
our estimates it will be enough to assume that $\mu$ is doubling at $x$.
Indeed, this will be a standing assumption for us
from Section~\ref{sect-5} onward.

\begin{deff}
We say that the measure $\mu$ is \emph{reverse-doubling at $x$}, if there
are constants $\ga,\tau>1$ such that 
\begin{equation}\label{eq:rev-doubling-x0}
  \mu(B(x,\tau r))\ge \ga \mu(B(x,r))
\end{equation}
holds for all $0<r\le \diam X/2\tau$.
\end{deff}

If $X$ is connected (or uniformly perfect) 
and $\mu$ is globally doubling, then
$\mu$ is also reverse-doubling at every point, with uniform constants;
see e.g.\ Corollary~3.8 in~\cite{BBbook}.
If $\mu$ is merely doubling at $x$, then
the reverse-doubling at $x$ does not follow automatically
and has to be imposed separately whenever needed.

If both~\eqref{eq:doubling-x0} and~\eqref{eq:rev-doubling-x0}
hold, then an iteration of these conditions
shows that there exist $q, q'>0$ and $C,C'>0$ 
such that 
\begin{equation}\label{eq:measure lower-upper}
C'\Bigl(\frac rR\Bigr)^{q'} \le \frac{\mu(B(x,r))}{\mu(B(x,R))}
\le C\Bigl(\frac rR\Bigr)^{q}
\end{equation}
whenever $0<r\le R<{2}\diam X$.
More precisely, the doubling inequality~\eqref{eq:doubling-x0}
leads to the first inequality, 
while the 
reverse-doubling \eqref{eq:rev-doubling-x0} yields the 
second inequality 
of~\eqref{eq:measure lower-upper}.
Recall also that the measure $\mu$ (and also the space $X$) is said to be 
\emph{Ahlfors $Q$-regular} if $\mu(B(x,r))\simeq r^Q$
for every $x\in X$ and all $0<r<2\diam X$.
This in particular implies that~\eqref{eq:measure lower-upper} holds with $q=q'=Q$.

The inequalities in \eqref{eq:measure lower-upper} will be of fundamental importance
to us.
Note that in \eqref{eq:measure lower-upper} one necessarily has $q'\ge q$ 
and that
there can be a gap between the exponents, as demonstrated by Example~\ref{ex1} below.
Garofalo--Marola~\cite{GaMa} introduced  the \emph{pointwise dimension}
$q(x)$ (called $Q(x)$ therein)
as the supremum of all $q>0$ such that
the second inequality in~\eqref{eq:measure lower-upper} holds
for some $C_q>0$ and all $0<r\le R<\diam X$. Furthermore,
Adamowicz--Bj\"orn--Bj\"orn--Shan\-mu\-ga\-lin\-gam~\cite{ABBSprime}
defined the \emph{pointwise dimension set} $Q(x)$
consisting  of all  $q>0$
for which there are constants $C_q>0$ and $R_q>0$ such that 
the second inequality in~\eqref{eq:measure lower-upper} holds
for all $0<r\le R\le R_q$. 
It was shown in~\cite[Example~2.3]{ABBSprime} that it is possible
to have $Q(x)=(0,q)$ for some $q$, that is,
the end point $q$ need  not be contained in the interval $Q(x)$.
Alternatively see Example~\ref{ex-log-weight-Q}
below.

For us it will be important to make even further distinctions.
We consider the exponent sets $\lQo$, $\lSo$, $\uSo$ and $\uQo$
from the introduction.
The pointwise dimension of Garofalo--Marola~\cite{GaMa} is then
$q(x) = \sup \lQ(x)$, where $\lQ(x)$ is a global version of $\lQo(x)$
(see below for the precise definition),
and the pointwise dimension set of~\cite{ABBSprime} is $Q(x)=\lQo(x)$
(to see this, one should also appeal to Lemma~\ref{lem-2}).
Recall that we often drop $x$ from the  notation,
and write $B_r=B(x,r)$.

If $\mu$ is doubling at $x$ (resp.\ reverse-doubling at $x$), then 
$\uQo\neq\emptyset$ 
(resp.\ $\lQo\neq\emptyset$), by \eqref{eq:measure lower-upper}.
The sets $\lQo$ and $\lSo$ are then intervals of the form $(0,q)$ or $(0,q]$,
whereas $\uQo$ and $\uSo$ are intervals of the form $(q,\infty)$ or
$[q,\infty)$.
Whether the end point is or is not included in the 
respective intervals will be important in many situations.

We start our discussion of the exponent sets by three lemmas concerning their
elementary properties. 
Note that Lemmas~\ref{lem-1}--\ref{lem-2} and~\ref{lem:infty sets invariant}
hold for arbitrary measures, without
assuming any type of doubling.

\begin{lem} \label{lem-1}
It is true that 
\[
    \lQo \subset \lSo 
    \quad \text{and} \quad
    \uQo \subset \uSo.
\]
Moreover, $\lSo \cap \uSo$ contains at most one point, and when
it is nonempty, $\lQo=\lSo$ and $\uQo=\uSo$.
\end{lem}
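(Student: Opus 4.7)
The plan is to verify each claim by direct manipulation of the defining inequalities, exploiting two trivial observations: setting $R=1$ reduces the ratio condition defining $\lQo$ (resp.\ $\uQo$) to the absolute bound defining $\lSo$ (resp.\ $\uSo$); and for $0<s\le 1$ the function $q\mapsto s^q$ is non-increasing.

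For $\lQo\subset\lSo$, given $q\in\lQo$ I substitute $R=1$ into the defining inequality to obtain $\mu(B_r)\le C_q\mu(B_1)r^q$ for $0<r<1$, and the case $r=1$ is covered by any constant $\ge\mu(B_1)$; thus $q\in\lSo$. The proof of $\uQo\subset\uSo$ is identical with the inequality reversed.

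Next I observe that $\lSo$ is downward closed and $\uSo$ is upward closed in $(0,\infty)$: since $r^q\le r^{q'}$ whenever $q'\le q$ and $r\le 1$, any upper (resp.\ lower) bound with exponent $q$ also holds with any smaller (resp.\ larger) exponent. Consequently, if $q_1<q_2$ both belonged to $\lSo\cap\uSo$, then applying the upper bound at $q_2$ and the lower bound at $q_1$ would give
\[
c_1 r^{q_1}\le\mu(B_r)\le C_2 r^{q_2}\quad\text{for all } 0<r\le 1,
\]
which forces $r^{q_2-q_1}\ge c_1/C_2>0$ and fails as $r\to 0^+$. Hence $\lSo\cap\uSo$ has at most one element.

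Suppose finally $q_0\in\lSo\cap\uSo$, so $\mu(B_r)\simeq r^{q_0}$ for $0<r\le 1$. The same dichotomy used in the previous paragraph rules out any $q'>q_0$ from $\lSo$ and any $q'<q_0$ from $\uSo$, so combined with the monotonicity already noted, $\lSo=(0,q_0]$ and $\uSo=[q_0,\infty)$. Dividing the two-sided bound for $\mu(B_r)$ by the corresponding bound for $\mu(B_R)$ gives $\mu(B_r)/\mu(B_R)\simeq(r/R)^{q_0}$ for $0<r<R\le 1$, placing $q_0$ in $\lQo\cap\uQo$. Since $(r/R)^q$ is also non-increasing in $q$ for $r/R\le 1$, the set $\lQo$ is downward closed and $\uQo$ is upward closed, so $\lQo\supset(0,q_0]=\lSo$ and $\uQo\supset[q_0,\infty)=\uSo$; the reverse inclusions were established at the outset. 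The argument is essentially bookkeeping, and the main thing to watch is keeping the ranges $0<r\le 1$ and $0<r<R\le 1$ consistent when passing between the $S$- and $Q$-sets; no genuine obstacle arises.
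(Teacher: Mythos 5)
Your proof is correct and follows essentially the same route as the paper's: the inclusions come from setting $R=1$, and the equality in the case $\lSo\cap\uSo\neq\emptyset$ comes from $\mu(B_r)\simeq r^{q_0}$, which places $q_0$ in $\lQo\cap\uQo$ and then forces equality via the first part. The paper's version is terser—it leaves the ``at most one point'' claim and the interval structure of the exponent sets implicit (these are asserted in the surrounding text)—whereas you make the downward/upward-closedness and the small-$r$ contradiction explicit, but the substance is identical.
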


\begin{proof}
If $q \in \lQo$, then
\(
    \mu(B_r) \le C_q \mu(B_1) r^q, 
\)
and thus $q \in \lSo$. 
Similarly $\uQo \subset \uSo$.

For the second part, let $q \in \lSo \cap \uSo$.
Then $\mu(B_r) \simeq r^q$ and it follows
that $q \in \lQo$ and $q \in \uQo$.
That $\lQo=\lSo$ and $\uQo=\uSo$ thus follows
from the first part.
\end{proof}

The following two lemmas show that the bound $1$ on the radii in the definitions
of the exponent sets can equivalently be replaced by any other fixed bound $R_0$.
They also provide formulas for the borderline exponents in the $S$-sets and 
estimates for the borderline exponents in the $Q$-sets.
Examples~\ref{ex-Cantor-staircase} and~\ref{ex-q-not-end-points} show that finding
the exact end points of the $Q$-sets may be rather subtle.

\begin{lem} \label{lem-3}
Let $q,R_0>0$.
Then $q \in \lSo$ if and only if there 
is a constant $C>0$ such that
\begin{equation} \label{eq-muBr-R_0}
        \mu(B_r)  \le C r^q 
       \quad  \text{for } 0 < r \le R_0.
\end{equation}
Similarly, $q \in \uSo$ if and only if there 
is a constant $C>0$ such that
\[
        \mu(B_r)  \ge C r^q 
       \quad  \text{for } 0 < r \le R_0.
\]
Furthermore,
let
\[
          q_0=\liminf_{r \to 0} \frac{\log \mu(B_r)}{\log r}
          \quad \text{and} \quad
          q_1=\limsup_{r \to 0} \frac{\log \mu(B_r)}{\log r}.
\]
Then $\lSo=(0,q_0)$ or $\lSo=(0,q_0]$,
and  $\uSo=(q_1,\infty)$ or $\uSo=[q_1,\infty)$.
\end{lem}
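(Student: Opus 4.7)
The plan is to split the proof into two independent parts: the radius-replacement equivalence, and the logarithmic characterisation of the borderline exponents.

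For the first part, I would argue by monotonicity of $\mu$ on nested balls together with the fact that on any compact subinterval of $(0,\infty)$ the function $r\mapsto r^q$ is bounded above and away from zero. Concretely, if $R_0\ge 1$ and the bound holds for $0<r\le 1$, then for $r\in[1,R_0]$ one has $\mu(B_r)\le \mu(B_{R_0})\le \mu(B_{R_0})r^q$ since $r^q\ge 1$; if $R_0<1$ and the bound holds for $0<r\le R_0$, then for $r\in[R_0,1]$ one uses $\mu(B_r)\le\mu(B_1)<\infty$ together with $r^q\ge R_0^q$ to enlarge the constant to $\mu(B_1)/R_0^q$. So only the constant changes, which is exactly the content of the equivalence. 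The argument for $\uSo$ is entirely symmetric: positivity $\mu(B_{r_*})>0$ and monotonicity give the lower extension, and again only the constant must be adjusted.

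For the formulas, I would take logarithms of the defining inequality. Since we care only about small $r$ where $\log r<0$, the bound $\mu(B_r)\le Cr^q$ is equivalent to
\[
\frac{\log\mu(B_r)}{\log r}\ge q+\frac{\log C}{\log r}.
\]
The additive term tends to $0$ as $r\to 0$, so taking $\liminf_{r\to 0}$ yields $q_0\ge q$, giving $\lSo\subseteq(0,q_0]$. Conversely, if $0<q<q_0$, pick $\eps>0$ with $q+\eps<q_0$; by definition of $\liminf$ we get $\mu(B_r)\le r^{q+\eps}$ for all sufficiently small $r$, whence $\mu(B_r)\le r^q$ (since $r^\eps\le 1$) for such $r$, and the extension argument from the first part then places $q$ in $\lSo$. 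This gives $(0,q_0)\subseteq\lSo\subseteq(0,q_0]$, which is the stated form. The treatment of $\uSo$ is analogous: the inequality becomes $\log\mu(B_r)/\log r\le q+\log C/\log r$, so taking $\limsup$ forces $q_1\le q$ for $q\in\uSo$, while for $q>q_1$ one picks $q-\eps>q_1$ and uses $r^{-\eps}\ge 1$ for $r\le 1$ to absorb the error, together with the first part.

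I do not foresee a genuine obstacle; the proof is essentially bookkeeping. The only point that requires mild care is the direction of inequalities when dividing by $\log r<0$, and the observation that the additive constants $\log C/\log r$ vanish in the limit, so that whether the endpoint $q_0$ (resp.\ $q_1$) lies in $\lSo$ (resp.\ $\uSo$) depends on finer behaviour of $\mu(B_r)$ at $0$ than what $q_0$ and $q_1$ record, which is exactly why both $(0,q_0)$ and $(0,q_0]$ can occur.
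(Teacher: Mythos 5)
Your proof is correct and follows essentially the same route as the paper's: the radius-replacement equivalence via monotonicity of $\mu$ on nested balls and boundedness of $r^q$ on compact intervals, and the endpoint characterisation by taking logarithms, dividing by $\log r<0$, and noting that the additive term $\log C/\log r$ vanishes as $r\to 0$. The only cosmetic difference is that the paper normalises to $R_0>1$ once, whereas you treat $R_0\ge 1$ and $R_0<1$ separately.
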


\begin{proof}
For the first part, assume that $q \in \lSo$. 
We may assume that $R_0>1$.
If $1 \le r < R_0$, then 
\[
           \mu(B_r) \le \mu(B_{R_0}) \le \mu(B_{R_0}) r^q,
\]
i.e.~\eqref{eq-muBr-R_0} holds with $C:=\max\{C_q,\mu(B_{R_0})\}$.
The converse implication is proved similarly.

For the last part, after taking logarithms 
we see that $q \in \lSo$ if and only if there is $C_q$ such 
that
\[
            q \le \frac{\log \mu(B_r)}{\log r} - \frac{\log C_q}{\log r}
            \quad \text{for } 0 <r < 1,
\]
which is easily seen to be possible if $q < q_0$, and impossible
if $q> q_0$.
The proofs for $\uSo$ are similar.
\end{proof}

\begin{lem} \label{lem-2}
Let $q,R_0>0$.
Then $q \in \lQo$ if and only if there 
is a constant $C>0$ such that
\begin{equation}   \label{eq-Q-R_0}
        \frac{\mu(B_r)}{\mu(B_R)}  \le C \Bigl(\frac{r}{R}\Bigr)^q 
        \text{ for } 0 < r < R \le R_0.
\end{equation}
The corresponding statement for $\uQo$ is also true.

Assume furthermore that $f(r):=\mu(B_r)$ is locally absolutely continuous 
on\/ $(0,\infty)$ and let
\[ 
\loq = \essliminf_{r \to 0} \frac{rf'(r)}{f(r)}
\quad \text{and} \quad
\uq=\esslimsup_{r \to 0} \frac{rf'(r)}{f(r)}.
\] 
Then
\[
    (0,\loq) \subset \lQo \subset (0,\uq]
\quad \text{and} \quad
    (\uq,\infty) \subset \uQo \subset [\loq,\infty).
\]
\end{lem}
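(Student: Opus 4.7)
The proof splits into the easier first statement---that the radius bound $1$ in the definition of $\lQo$ (and $\uQo$) can be replaced by an arbitrary $R_0>0$---and the relation of the $Q$-sets to $\loq$ and $\uq$. For the first part I plan to show, more generally, that the validity of the inequality $\mu(B_r)/\mu(B_R) \le C(r/R)^q$ for all $0 < r < R \le R_1$ is equivalent to its validity for all $0 < r < R \le R_2$, for any $R_1, R_2 > 0$ (with a possibly different constant). It suffices to treat the case $R_2 > R_1$. Given $0 < r < R \le R_2$ with $R > R_1$, I split into two subcases: if $r \le R_1$, applying the hypothesis with upper radius $R_1$ gives $\mu(B_r)/\mu(B_{R_1}) \le C(r/R_1)^q$, and the factor $(R/R_1)^q \le (R_2/R_1)^q$ can be absorbed into a new constant; if $r > R_1$, the monotonicity $\mu(B_r) \le \mu(B_R)$ together with $R/r \le R_2/R_1$ gives the bound directly. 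The $\uQo$ case is symmetric, using $\mu(B_R)/\mu(B_r) \ge 1$ when $r > R_1$.

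For the second part, the local absolute continuity and strict positivity of $f(r):=\mu(B_r)$ give the identity
\[
\log\frac{f(R)}{f(r)} = \int_r^R \frac{f'(s)}{f(s)}\,ds = \int_r^R g(s)\,\frac{ds}{s}, \qquad g(s):=\frac{sf'(s)}{f(s)},
\]
for all $0 < r < R$, while $\int_r^R ds/s = \log(R/r)$. Consequently, pointwise a.e.\ bounds on $g$ near $0$ translate directly into two-sided power bounds for $f(R)/f(r)$.

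To prove $(0,\loq) \subset \lQo$, I fix $q < \loq$ and use the definition of $\essliminf$ to obtain $\delta > 0$ with $g(s) \ge q$ for a.e.\ $s \in (0,\delta)$; integration then yields $\mu(B_r)/\mu(B_R) \le (r/R)^q$ whenever $0 < r < R \le \delta$, and the first part of the lemma extends this to $R \le 1$. Symmetrically, $(\uq,\infty) \subset \uQo$ using $g(s) \le q$ for a.e.\ small $s$. For the reverse inclusion $\lQo \subset (0,\uq]$, I argue by contradiction: if $q \in \lQo$ and $q > \uq$, choose $\eps > 0$ with $\uq + \eps < q$; then $g(s) \le q - \eps$ for a.e.\ $s$ in some $(0,\delta)$, whence $\mu(B_r)/\mu(B_R) \ge (r/R)^{q-\eps}$ on this range. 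Combined with $\mu(B_r)/\mu(B_R) \le C(r/R)^q$, this forces $(R/r)^\eps \le C$, which fails upon letting $r \to 0$ with $R$ fixed. The dual inclusion $\uQo \subset [\loq,\infty)$ follows by an analogous contradiction. The only mildly delicate point is the local integrability of $g$ near any fixed $R$, which is automatic from the positivity and local absolute continuity of $f$ on $(0,\infty)$.
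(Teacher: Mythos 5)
Your proof is correct and closely parallels the paper's for the first part (replacing the radius bound $1$ by $R_0$ via a case split on whether $r$ falls below or above the pivot) and for the forward inclusions $(0,\loq)\subset\lQo$ and $(\uq,\infty)\subset\uQo$ (integrating $g(s)=sf'(s)/f(s)$ with respect to $ds/s$, which is the same as integrating $h'=f'/f$ for $h=\log f$ as the paper does). The genuine difference is in the reverse inclusions $\lQo\subset(0,\uq]$ and $\uQo\subset[\loq,\infty)$: you prove these by a direct contradiction, combining the two-sided power bounds obtained from the integral identity with the assumed membership in $\lQo$ (resp.\ $\uQo$) and letting $r\to0$. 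The paper instead derives these as a corollary of the already-established forward inclusions together with Lemma~\ref{lem-1} ($\lQo\cap\uQo$ contains at most one point, and the $Q$-sets are intervals). Your route is more self-contained and does not depend on the earlier lemma; the paper's is shorter given that the auxiliary fact is already available. One small slip: for the $\uQo$ version of the first part, the helpful observation in the regime $R_1<r<R\le R_2$ is not ``$\mu(B_R)/\mu(B_r)\ge 1$'' (which gives an \emph{upper} bound on $\mu(B_r)/\mu(B_R)$, the wrong direction), but rather $\mu(B_r)/\mu(B_R)\ge\mu(B_{R_1})/\mu(B_{R_2})$, which together with $(r/R)^q\le1$ yields the lower bound; this is a trivial fix and does not affect the argument.
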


The following example shows that the assumption that $f$ is locally absolutely
continuous in Lemma~\ref{lem-2} is not redundant.

\begin{example}  \label{ex-Cantor-staircase} 
Let $X$ be the usual Cantor ternary set, defined as a subset of $[0,1]$ and
equipped with the normalized $d$-dimensional Hausdorff measure $\mu$ with 
$d=\log 2 /{\log 3}$. 
Let $x=0$.
Then $f(r)=\mu(B_r)$ will be the Cantor staircase function
which is not absolutely continuous.
(See Dovgoshey--Martio--Ryazanov--Vuorinen~\cite{CantorFn} 
for the history of the Cantor staircase function.)
At the same time, $\mu$ is Ahlfors $d$-regular and hence $\lSo=\lQo=(0,d]$
and $\uSo=\uQo=[d,\infty)$, while $\loq=\uq=0$.

On the other hand if $X=\R^n$ is equipped with a weight $w$ and 
$d\mu=w\, dx$, then
$f$ automatically is locally absolutely continuous.
In particular, this is true if $w$ is a \p-admissible weight.
We do not know if $f$ is always locally absolutely continuous whenever
$\mu$ is both globally doubling and 
supports a global Poincar\'e inequality.
\end{example}

\begin{proof}[Proof of Lemma~\ref{lem-2}]
We prove that $q\in\lQo$ implies~\eqref{eq-Q-R_0}.
The proofs of the converse implication and for $\uQo$ are similar.
We may assume that $R_0>1$.
If $1\le r< R\le R_0$, then
\[
\frac{\mu(B_r)}{\mu(B_R)} \le 1 = R_0^q \biggl( \frac{1}{R_0} \biggr)^q  
\le R_0^q \Bigl(\frac{r}{R}\Bigr)^q.
\]
For $r\le 1\le R\le R_0$ we instead have
\[
\frac{\mu(B_r)}{\mu(B_R)} \le \frac{\mu(B_r)}{\mu(B_1)} \le C_q r^q 
\le C_q R_0^q \Bigl(\frac{r}{R}\Bigr)^q.
\]
Thus,~\eqref{eq-Q-R_0} holds whenever $R\ge1$. 
For $R\le1$ the claim follows directly from
the assumption $q\in\lQo$.

Next assume that $f$ is locally absolutely continuous and let $q \in(0,\loq)$.
Then $h(r)=\log f(r)$
is also locally absolutely continuous and $h'(r)=f'(r)/f(r)$.
By assumption there is $\Rt$ such that $\rho h'(\rho)>q$ for a.e.\ $0<\rho \le \Rt$.
Since $h$ is locally absolutely continuous, we have
for $0<r<R \le \Rt$ that
\[
    \log \frac{f(R)}{f(r)} 
     = h(R)- h(r) = \int_r^R h'(\rho) \,d\rho
    \ge \int_r^R \frac{q}{\rho}\,d \rho
    = \log  \biggl(\frac{R}{r}\biggr)^q,
\]
and thus
\[
     \frac{\mu(B_r)}{\mu(B_R)} \le \Bigl(\frac{r}{R}\Bigr)^q.
\]
By the first part, with $R_0=\Rt$, we get that 
 $q \in \lQ_0$.
Hence $(0,\loq) \subset \lQo$.
The proof that    $ (\uq,\infty) \subset \uQo$ is analogous.
The remaining inclusions follow from these inclusions  together with
the fact that $\lQo \cap \uQo$ contains at most one point (by
Lemma~\ref{lem-1}).
\end{proof}

The following example shows that $\loq$ and $\uq$ (from Lemma~\ref{lem-2}) 
need not be the end points of $\lQ_0$ and $\uQ_0$.

\begin{example}  \label{ex-q-not-end-points}
Let $f$ be given for $r\in(0,\infty)$ by
\[
f(r)=\begin{cases}
    a_k r^{n-1}, & \text{if } 4^{-k} \le r \le 2\cdot 4^{-k}, 
                     \ k\in\Z,\\
    \displaystyle \frac {r^{n+1}}{a_k}, 
                  & \text{if } 2\cdot 4^{-k} \le r \le 4\cdot 4^{-k}, 
                     \ k\in\Z,\\
        \end{cases}
\] 
where $a_k = 2\cdot 4^{-k}$ and $n\ge1$.
Note that $f$ is increasing and locally Lipschitz.
For a.e.\ $x\in\R^n$ set
\[
w(x) =  \frac{f'(|x|)}{\omega_{n-1} |x|^{n-1}},
\]
where $\omega_{n-1}$ is the surface area of the $(n-1)$-dimensional sphere in $\R^n$.
With this choice of $w$ we have 
\[
f(r)=\omega_{n-1} \int_0^r w(\rho)\rho^{n-1}\,d\rho = \mu(B_r),
\]
where $d\mu = w\,dx$.
Since
\[
f'(r)=\begin{cases}
    (n-1)a_k r^{n-2}, & \text{if } 4^{-k}<r<2\cdot 4^{-k}, 
                     \ k\in\Z,\\
    \displaystyle \frac {n+1}{a_k} r^{n}, 
              & \text{if } 2\cdot 4^{-k}<r<4\cdot 4^{-k}, \ k\in\Z,\\
        \end{cases}
\] 
and $r\simeq a_k$ on $(4^{-k}, 4\cdot 4^{-k})$, we see that $w\simeq 1$
on $\R^n$, i.e.\ $\mu$ is comparable to the Lebesgue measure.
In particular, $\mu$ is Ahlfors $n$-regular and supports a 
global $1$-Poincar\'e 
inequality, $\lQ_0=(0,n]$ and $\uQ_0=[n,\infty)$. 

At the same time, considering $r \in (4^{-k}, 2\cdot 4^{-k})$ and
$r \in (2\cdot 4^{-k}, 4\cdot 4^{-k})$, respectively, gives
\[
{\essliminf_{r\to0}} \frac{r f'(r)}{f(r)} = n-1
\quad \text{and} \quad
{\esslimsup_{r\to0}} \frac{r f'(r)}{f(r)} = n+1.
\]
It is easy to construct a similar example with a continuous weight $w$.
\end{example}

If $X$ is unbounded, we will consider the following 
exponent sets at $\infty$  
for results in large balls and with respect to the whole space:
\begin{align*}
  \lQi(x)
        & :=\Bigr\{q>0 : \text{there is $C_q$ so that } 
        \frac{\mu(B(x,r))}{\mu(B(x,R))}  \le C_q \Bigl(\frac{r}{R}\Bigr)^q 
        \text{ for } 1 \le r < R
        \Bigl\}, \\
  \lSi(x)
          & :=\{q>0 : \text{there is $C_q$ so that } 
        \mu(B(x,r))  \ge C_q r^q 
        \text{ for } r\ge1 
        \}, \\
  \uSi(x)
          & :=\{q>0 : \text{there is $C_q$ so that } 
        \mu(B(x,r))  \le C_q r^q 
        \text{ for }  r\ge1 
        \}, \\
  \uQi(x)
        & :=\Bigr\{q>0 : \text{there is $C_q$ so that } 
        \frac{\mu(B(x,r))}{\mu(B(x,R))}  \ge C_q \Bigl(\frac{r}{R}\Bigr)^q 
        \text{ for } 1 \le r < R  
        \Bigl\}.
\end{align*}
Note that the inequality in $\lSi(x)$ is reversed from the one in $\lSo(x)$,
and similarly for $\uSi(x)$.
This guarantees that $\lSi=(0,q)$ or $\lSi=(0,q]$ and $\uSi=(q,\infty)$ or 
$\uSi=[q,\infty)$, rather than the other way round,
and also that $\lQi \subset \lSi$ and $\uQi \subset \uSi$.

Lemmas~\ref{lem-1}--\ref{lem-2} above have direct counterparts for these 
exponent sets at~$\infty$.
In addition, Lemma~\ref{lem:infty sets invariant} below 
shows that these sets are actually
independent of the
point $x\in X$, and thus the sets $\lQi$, $\lSi$, $\uSi$ and $\uQi$ are
well defined objects for the whole space $X$, not merely a
short-hand notation (with a fixed base point $x\in X$)
as in the case of $\lQo$, $\lSo$, $\uSo$ and $\uQo$.
Note, however, that in general for instance the set $\lSi$ is
different from the set
\[
\{q>0 : \text{there is $C_q$ so that } 
        \mu(B(x,r))  \le C_q r^q 
        \text{ for every } x\in X \text{ and all } r \ge 1 
        \},
\]
since the constant $C_q$ in the definition of $\lSi$ 
is allowed to depend on the point $x$.
This can be seen e.g.\ by letting 
$w(x)=\log(2+|x|)$, 
which is  a $1$-admissible weight on $\R^n$ by 
Proposition~\ref{prop-suff-admiss-w} below.
Recall that a weight $w$ in $\R^n$
is \p-admissible, $p\ge1$, if the measure $d\mu={w}\,dx$
is globally doubling and supports
a global \p-Poincar\'e inequality.

\begin{lem}\label{lem:infty sets invariant}
 Let $X$ be unbounded and fix $x\in X$.
 Then, for every $y\in X$, we have
 $\lQi(x)=\lQi(y)$, $\lSi(x)=\lSi(y)$, $\uSi(x)=\uSi(y)$ and $\uQi(x)=\uQi(y)$.
\end{lem}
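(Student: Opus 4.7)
The plan is to fix $y \in X$, set $D = d(x,y)$, and translate membership in an exponent set at $x$ into membership at $y$ via the inclusions
\[
B(y,r) \subset B(x, r+D) \quad \text{and} \quad B(x,r-D) \subset B(y,r) \text{ when } r>D,
\]
which follow immediately from the triangle inequality. By symmetry, it suffices to prove the four inclusions $Q_\infty^{\phantom.}(x) \subset Q_\infty^{\phantom.}(y)$ for the four variants $\lQi,\lSi,\uSi,\uQi$; swapping $x$ and $y$ yields the reverse inclusions.

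For the two $S$-sets the argument is direct. Suppose, for instance, that $q\in\lSi(x)$, so that $\mu(B(x,\rho))\ge C_q \rho^q$ for $\rho\ge 1$. For $r\ge 2D+1$ we have $r-D\ge r/2\ge 1$, hence
\[
\mu(B(y,r)) \ge \mu(B(x,r-D)) \ge C_q (r-D)^q \ge 2^{-q} C_q r^q.
\]
For $1 \le r \le 2D+1$ monotonicity gives $\mu(B(y,r))\ge \mu(B(y,1))>0$, and since $r^q \le (2D+1)^q$ this yields $\mu(B(y,r))\ge C'_q r^q$ with a possibly different constant. Combining the two ranges shows $q\in\lSi(y)$. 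The argument for $\uSi$ is essentially identical, using the other inclusion and the estimate $r+D\le 2r$ for $r\ge D$.

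For the $Q$-sets the same idea works, but one has to be careful with the ratio of radii. Assume $q\in\lQi(x)$ and let $1\le r<R$. First consider the case $R\ge 2D$. Then $R-D\ge R/2>0$ and $r+D\le (1+D)r$ (since $r\ge 1$), so
\[
\frac{\mu(B(y,r))}{\mu(B(y,R))} \le \frac{\mu(B(x,r+D))}{\mu(B(x,R-D))} \le C_q \Bigl(\frac{r+D}{R-D}\Bigr)^q \le C_q \bigl(2(1+D)\bigr)^q \Bigl(\frac{r}{R}\Bigr)^q,
\]
provided also $r+D<R-D$, i.e.\ $R>r+2D$; if instead $R\le r+2D$ we have $R/r$ bounded and the trivial bound $\mu(B(y,r))/\mu(B(y,R))\le 1$ suffices. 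In the remaining range $1\le r<R<2D$, the ratio $r/R$ is bounded below by $1/(2D)$, so the trivial bound $\mu(B(y,r))/\mu(B(y,R))\le 1\le (2D)^q (r/R)^q$ works. Thus $q\in\lQi(y)$, and the argument for $\uQi$ is symmetric, swapping the roles of the inclusions.

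The main nuisance (rather than a real obstacle) is bookkeeping: since no doubling is assumed, one cannot absorb shifts $r\mapsto r\pm D$ into constants uniformly, and one must split into a large-radius regime where the shift is a bounded perturbation and a bounded-radius regime where one uses only the a priori positivity and monotonicity of $\mu$ on balls. Once the two regimes are handled separately, all four inclusions follow from the same template.
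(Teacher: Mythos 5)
Your strategy is sound and differs from the paper's: you work directly with the inclusions $B(x,r-D)\subset B(y,r)\subset B(x,r+D)$ and split by hand into a large-radius regime and a bounded-radius regime, whereas the paper first invokes the $\infty$-versions of Lemmas~\ref{lem-3} and~\ref{lem-2} to reduce at once to radii $r\ge 2d(x,y)$, and then uses the cleaner inclusions $B(x,r/2)\subset B(y,r)\subset B(x,2r)$, so that all four comparisons follow from a single two-sided display. Your verifications for $\lSi$, $\uSi$ and $\lQi$ are correct. The one-line dismissal of $\uQi$ as ``symmetric'', however, hides a genuine gap.

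In your $\lQi$ argument the bad range, where the $\lQi(x)$ inequality cannot be applied to the pair $(r+D,R-D)$, is $R\le r+2D$; this forces $R/r\le 1+2D$, so the trivial upper bound $\mu(B(y,r))/\mu(B(y,R))\le 1$ finishes the job. For $\uQi$ one needs a \emph{lower} bound on this ratio, and the bad range, where the $\uQi(x)$ inequality cannot be applied to $(r-D,R+D)$ because $r-D<1$, is $1\le r<D+1$ with $R$ completely unconstrained. When $r$ sits in this bounded range and $R$ is large, the ratio $\mu(B(y,r))/\mu(B(y,R))$ can tend to zero, and there is no trivial lower bound; the two case splits are not mirror images. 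To fill the gap, for $1\le r<2D+1\le R$ (the regime with both radii bounded being trivial as before) combine $\mu(B(y,r))\ge\mu(B(y,1))$ with $\mu(B(y,R))\le\mu(B(x,R+D))$ and apply the $\uQi(x)$ inequality to the pair $(1,R+D)$: this gives $\mu(B(x,R+D))\le C_q^{-1}\mu(B(x,1))(R+D)^q\simle R^q$, hence $\mu(B(y,r))/\mu(B(y,R))\simge R^{-q}\ge(2D+1)^{-q}(r/R)^q$. Alternatively, one can reduce at the outset to $r\ge 2d(x,y)$ via the $\infty$-version of Lemma~\ref{lem-2}, as the paper does, which sidesteps the issue entirely.
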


\begin{proof}
Let $y\in X$. 
By (the $\infty$-versions of) Lemmas~\ref{lem-3} and~\ref{lem-2} it is enough to
verify the definitions of the exponent sets for $R>r\ge 2d(x,y)$.
In this case we have
$B(x,r/2)\subset B(y,r)\subset B(x,2r)$ and similarly for $B(y,R)$.
Hence 
\[
\frac{\mu(B(x,r/2))}{\mu(B(x,2R))} \le \frac{\mu(B(y,r))}{\mu(B(y,R))} 
\le \frac{\mu(B(x,2r))}{\mu(B(x,R/2))},
\]
which shows that the inequalities in the definitions of the exponent sets at $\infty$
hold for $y$ if and only if they hold for $x$.
\end{proof}

Finally, when we want to be able to treat both large and small balls uniformly
we need to use the sets
\begin{align*}
   \lQ(x):=\lQo(x) \cap \lQi 
   \quad 
   \text{and} 
   \quad
   \uQ(x):=\uQo(x) \cap \uQi.
\end{align*}
If $X$ is bounded, we simply set  $\lQ:=\lQo$
and $\uQ:=\uQo$.

\begin{remark}
Let $k(t)=\log \mu(B_{e^t})$. Then it is easy to show that
$q \in \lQo$ and $q' \in \uQo$ if and only if there is a constant $C$
such that
\[
          q (T-t) - C \le      k(T) - k(t) \le q'(T-t) + C,
          \quad \text{if } t < T < 0,
\]
or in other terms
\[
          q |T-t| - C \le      |k(T) - k(t)| \le q'|T-t| + C,
          \quad \text{if } t, T < 0,
\]
i.e.\ $k$ is a $(q,q',C)$-rough quasiisometry on $(-\infty,0)$ for some $C$.
Similarly, if $X$ is unbounded, 
then $k$ is 
a $(q,q',C)$-rough quasiisometry on $(0,\infty)$ 
(resp.\ on $\R$) for some $C$
if and only if $q \in \lQi$ and $q' \in \uQi$
(resp.\ $q \in \lQ$ and $q' \in \uQ$).
Much of the current literature on rough 
quasiisometries call such maps quasiisometries,
but we have chosen to follow the terminology of
Bonk--Heinonen--Koskela~\cite{BHK}  to avoid confusion with biLipschitz maps.
\end{remark}

\section{Examples of exponent sets}
\label{sect-ex}

In this section we give various examples of the exponent sets.
In particular, we shall see that the end points of the four exponent sets can 
all be different (Examples~\ref{ex1} and~\ref{ex-abcd-alt}) 
and that the borderline
exponents may or may not belong to the sets (Examples~\ref{ex-log-weight-Q} 
and~\ref{ex-S-touch}). 

Our examples are based on radial weights in $\R^n$, and all the weights 
we consider
are in fact 1-admissible, i.e.\ they are globally doubling
and support a global 1-Poincar\'e inequality on $\R^n$.
Later in Section~\ref{sect-sharpness} these 
weights 
will be used to demonstrate 
the sharpness of several of our capacity estimates.
In Section~\ref{sect:weights} we give
a general sufficient condition for 1-admissibility of radial weights. 

For simplicity, we write e.g.\ $\log^\be r:=(\log r)^\be$.

\begin{example}   \label{ex-log-weight-Q}
Consider $\R^n$, $n\ge2$, equipped with the measure $d\mu=w(|y|)\,dy$, where
\[
      w(\rho)=\begin{cases}
          \rho^{p-n} \log^\be (1/\rho), & \text{if } 0<\rho\le1/e, \\
          \rho^{p-n} , & \text{otherwise}.
        \end{cases}
\]
Here $p\ge1$ and $\be\in\R$ is arbitrary.
Fix $x=0$ and write $B_r=B(0,r)$.
Then it is easily verified that for $r\le1/e$ we have 
$\mu(B_r)\simeq r^p\log^\be (1/r)$.
Letting $r\to0$ in the definition of the 
exponent sets
shows that 
\[
    \lSo=\lQo=\lQ = \begin{cases}
      (0,p], & \text{if } \be \le 0, \\
      (0,p), & \text{if } \be > 0, 
    \end{cases}
\quad \text{and} \quad
    \uSo=\uQo=\uQ = \begin{cases}
      (p,\infty), & \text{if } \be <0, \\
      [p,\infty), & \text{if } \be \ge 0. 
    \end{cases}
\]
In both cases $\sup\lQ=\inf\uQ=p$, 
but only one of these is 
attained (when $\be\ne0$).
Letting instead 
\[  
    w(\rho)=\begin{cases}
   \rho^{p-n} \log^\be \rho & \text{for } \rho\ge e, \\
   \rho^{p-n}, & \text{otherwise},
   \end{cases}
\]
gives again $\sup\lQ=\inf\uQ=p$, but if $\be>0$ it is now $\sup\lQ$ that is attained, 
while for $\be<0$ only $\inf\uQ$ is attained.
\end{example}

\begin{example} \label{ex1}
We are now going to create an example of a $1$-admissible weight 
in $\R^2$ with
\begin{equation} \label{eq-ex1}
\lQ=\lQo=(0,2], \quad
\lSo=(0,3], \quad
\uSo=\bigl[\tfrac{10}{3},\infty\bigr)
    \quad \text{and} \quad
\uQ=\uQo=[4,\infty),
\end{equation}
showing that the four end points can all be different.

Let $\alp_k=2^{-2^k}$ and
 $\be_k= \alp_k^{3/2}=2^{-3\cdot 2^{k-1}}$, $k=0,1,2,\ldots$\,.
Note that $\alp_{k+1}= \alp_k^2$. 
In $\R^2$ we fix $x=0$ and consider the measure $d\mu=w(|y|)\,dy$, where
\[
      w(\rho)=\begin{cases}
                     \alp_{k+1}, & \text{if } \alp_{k+1} \le \rho \le \be_k, 
                     \ k=0,1,2,\ldots,\\
                     \rho^2/ \alp_{k}, & \text{if } \be_k \le \rho \le \alp_{k}, 
                     \ k=0,1,2,\ldots,\\
                     \rho, & \text{if } \rho \ge \tfrac{1}{2}.
        \end{cases}
\] 
Then
\[
          \frac{\rho w'(\rho)}{w(\rho)}
          = \begin{cases}
                     0 , & \text{if } \alp_{k+1} < \rho < \be_k, 
                     \ k=0,1,2,\ldots,\\
                     2, & \text{if } \be_k < \rho < \alp_{k}, 
                     \ k=0,1,2,\ldots,\\
                     1, & \text{if } \rho > \tfrac{1}{2},
        \end{cases}
\]
and thus $w$ is $1$-admissible by Proposition~\ref{prop-suff-admiss-w}.
We next have that 
\begin{equation} \label{eq-ex1-2}
         \mu(B_{r} \setm B_{\alp_{k+1}}) 
          \simeq \int_{\alp_{k+1}}^{r}  w(\rho)\rho \, d\rho 
          = \frac{\alp_{k+1}}{2} (r^2-\alp_{k+1}^2),
          \quad \text{if } \alp_{k+1} \le r \le \be_k.
\end{equation}
In particular,
\[
         \mu(B_{\be_k} \setm B_{\alp_{k+1}}) 
          \simeq 
           \frac{\alp_{k+1}}{2} (\beta_k^2-\alp_{k+1}^2)
          = \frac{\alp_{k}^5(1-\alp_k)}{2} 
          \simeq \alp_{k}^5.
\]
For $\be_k \le r \le \alp_k$ we instead have 
\begin{equation} \label{eq-ex1-2b}
         \mu(B_{r} \setm B_{\be_{k}}) 
          \simeq \int_{\be_k}^{r}  w(\rho)\rho \, d\rho 
          = \frac{r^4-\be_k^4}{4\alp_k},
\end{equation}
and thus
\[
         \mu(B_{\alp_k} \setm B_{\be_{k}}) 
          \simeq  \frac{\alp_{k}^4-\be_k^4}{4\alp_k} 
          \simeq \alp_{k}^3.
\]
It follows that
\begin{equation} \label{eq-ex1-3b}
    \mu(B_{\be_k}) \simeq \alp_k^5 +\alp_k^6 + \alp_k^{10} + \alp_k^{12} + \ldots 
      \simeq   \alp_k^5=\be_k^{10/3}
\end{equation}
and 
\begin{equation} \label{eq-ex1-3}
    \mu(B_{\alp_k}) \simeq \alp_k^3 + \alp_k^5 \simeq \alp_k^3.
\end{equation}
Since $w(\rho) \le \rho$ for all $\rho$, we have that $\mu(B_r) \simle r^3$ for
all $r$, which together with \eqref{eq-ex1-3} shows
that $\lSo=(0,3]$.

From the estimates \eqref{eq-ex1-3} and 
\eqref{eq-ex1-2} we obtain
\begin{equation} \label{eq-ex1-4}
      \mu(B_r) \simeq \alp_{k+1} r^2, \quad
      \text{if } \alp_{k+1}  \le r \le \be_k.
\end{equation}
Indeed, when $\alp_{k+1} \le r \le 2 \alp_{k+1}$ this
follows directly from \eqref{eq-ex1-3}, and for 
$2 \alp_{k+1} \le r \le \be_k$ we use 
\eqref{eq-ex1-2} to 
get a lower bound, while the upper bound follows from \eqref{eq-ex1-2}
together with \eqref{eq-ex1-3}.
In particular, we get that
\begin{equation} \label{eq-ex1-4b}
      \mu(B_r) \simeq \alp_{k+1} r^2 
      = \be_k^{4/3} r^2 \ge r^{10/3}, \quad
      \text{if } \alp_{k+1}  \le r \le \be_k.
\end{equation}
Estimating similarly, using instead \eqref{eq-ex1-2b}
and \eqref{eq-ex1-3b}, shows that
\begin{equation}    \label{eq-Br-le-al}
      \mu(B_r) \simeq \frac{r^4}{\alp_k}  
      = \frac{r^4}{\be_k^{2/3}}  \ge r^{10/3}, \quad
      \text{if } \be_{k}  \le r \le \alp_k.
\end{equation}
We conclude from the 
last two estimates 
and from \eqref{eq-ex1-3b} 
that $\uSo=\bigl[\tfrac{10}{3},\infty\bigr)$.

Next, we see from \eqref{eq-ex1-4} and \eqref{eq-Br-le-al} 
that
\begin{equation} \label{eq-ex1-7}
          \frac{\mu(B_r)}{\mu(B_R)} \simeq 
          \begin{cases}
\displaystyle \Bigl(\frac{r}{R}\Bigr)^2,
     &       \text{if } \alp_{k+1}  \le r \le R \le \be_k, \\[2mm]
\displaystyle \Bigl(\frac{r}{R}\Bigr)^4,
     &       \text{if } \be_{k}  \le r \le R \le \alp_k.
     \end{cases}
\end{equation}
Hence, if
$\alp_{k+1} \le r \le \be_k \le R \le \alp_k$, then
\[
  \frac{\mu(B_r)}{\mu(B_R)} = \frac{\mu(B_r)}{\mu(B_{\be_k})}\frac{\mu(B_{\be_k})}{\mu(B_R)}
  \simeq \biggl(\frac{r}{\be_k}\biggr)^2\biggl(\frac{\be_k}{R}\biggr)^4
  = \frac{r^2 \be_k^2}{R^4} 
\]
and thus
\[
  \Bigl(\frac{r}{R}\Bigr)^4 \simle \frac{\mu(B_r)}{\mu(B_R)} 
   \simle \Bigl(\frac{r}{R}\Bigr)^2.
\]
It follows from \eqref{eq-ex1-7} that this estimate holds also 
in the remaining cases when $\alp_{k+1} \le r  \le R \le \alp_k$.
Finally, if $\alp_{j+1} \le r \le \alp_j \le \alp_{k+1} \le R \le \alp_k$,
then
\[
  \frac{\mu(B_r)}{\mu(B_R)} 
    = \frac{\mu(B_r)}{\mu(B_{\alp_j})}\frac{\mu(B_{\alp_j})}{\mu(B_{\alp_{k+1}})}
     \frac{\mu(B_{\alp_{k+1}})}{\mu(B_R)}
     \simle \biggl(\frac{r}{\alp_j}\biggr)^2 \biggl(\frac{\alp_j}{\alp_{k+1}}\biggr)^2
\biggl(\frac{\alp_{k+1}}{R}\biggr)^2 = \Bigl(\frac{r}{R}\Bigr)^2
\]
and 
\[
  \frac{\mu(B_r)}{\mu(B_R)} 
    = \frac{\mu(B_r)}{\mu(B_{\alp_j})}\frac{\mu(B_{\alp_j})}{\mu(B_{\alp_{k+1}})}
     \frac{\mu(B_{\alp_{k+1}})}{\mu(B_R)}
     \simge \biggl(\frac{r}{\alp_j}\biggr)^4 \biggl(\frac{\alp_j}{\alp_{k+1}}\biggr)^4
\biggl(\frac{\alp_{k+1}}{R}\biggr)^4 = \Bigl(\frac{r}{R}\Bigr)^4,
\]
which together with~\eqref{eq-ex1-7}
show that 
\[
\lQ=\lQo=(0,2]
    \quad \text{and} \quad
\uQ=\uQo=[4,\infty).
\]
(The estimates for balls with radius larger than $\al_0=\frac{1}{2}$ are
easier.)
\end{example}

The following example is a modification of Example~\ref{ex1}. It 
shows that we can have
$\sup\lSo=\inf\uSo$ while $\lSo\ne\lQo$ and $\uSo\ne\uQo$.
In this case the common borderline exponent 
of the $S$-sets
belongs to $\lSo$ but not to $\uSo$, thus 
demonstrating the sharpness of Lemma~\ref{lem-1}.

\begin{example}   \label{ex-S-touch}
Consider $\R^2$ and $x=0$.
Let $\alp_k$ and $w$ be as in Example~\ref{ex1}.
Also let $\ga_k=\alp_{k+1} \log k$
and $\de_k=\alp_{k+1} \log^2 k$, $k=3,4,\ldots$\,,
so that $\alp_{k+1} < \ga_k < \de_k < \alp_k$,
and let
\[
      w_2(\rho)=\begin{cases}
                     \alp_{k+1}, & \text{if } \alp_{k+1} \le \rho \le \ga_k, 
                     \ k=3,4,\ldots,\\
                  \rho^2/\de_k,
              & \text{if } \ga_k \le \rho \le \de_{k}, 
                     \ k=3,4,\ldots,\\
                     \rho, & \text{otherwise},
        \end{cases}
\] 
and $d\mu(y) =w_2(|y|)\,dx$.
It follows from Proposition~\ref{prop-suff-admiss-w}
that $w_2$ is $1$-admissible,
as
\[  
     0 \le \frac{\rho w_2'(\rho)}{w_2(\rho)} \le 2
     \quad \text{a.e.}
\]
Since $w(\rho) \le w_2(\rho) \le \rho$ for $\rho \le \alp_2$ we see that
$\mu(B_{\alp_k}) \simeq \alp_k^3$ and $\lSo=(0,3]$.
Moreover,
\[
         \mu(B_{\ga_k} \setm B_{\alp_{k+1}}) 
          \simeq \int_{\alp_{k+1}}^{\ga_k} w(\rho) \rho \, d\rho 
          = \frac{\alp_{k+1}}{2} (\ga_k^2-\alp_{k+1}^2)
          \simeq \alp_k^2 \ga_k^2
          = \alp_{k}^6 \log^2 k 
\]
and
\[
        \mu(B_{\de_k} \setm B_{\ga_k})
        \simeq \int_{\ga_{k}}^{\de_k} \frac{\rho^2}{\de_k}  \rho \,d\rho 
          = \frac{\de_k^4 - \ga_k^4}{4\de_k}
          \simeq \de_k^3.
\]
It follows that 
\[ 
       \mu(B_{\ga_k} ) \simeq \alp_{k}^6 \log^2 k = \frac{\ga_k^3}{\log k}
       \quad \text{and} \quad
       \mu(B_{\de_k} ) \simeq \de_{k}^3.
\]
As in Example~\ref{ex1} one can show that these are the extreme cases, and thus
letting $k\to\infty$ shows that $\uSo=(3,\infty)$.
Moreover,
\[
     \frac{\mu(B_{\alp_{k+1}})}{\mu(B_{\ga_k})}
     \simeq \frac{1}{\log^2 k}
     = \biggl(\frac{\alp_{k+1}}{\ga_k}\biggr)^2.
\]
Since $\alp_{k+1} / \ga_k = 1/{\log k} \to 0$, as $k \to \infty$,
this shows that $p \notin \lQo$ if $p>2$.
As this is the extreme case, we see that $\lQ=\lQo=(0,2]$.
Finally, 
\[
  \frac{\mu(B_{\ga_k})}{\mu(B_{\de_{k}})}
     \simeq \biggl(\frac{\ga_k}{\de_{k}}\biggr)^3 \frac{1}{\log k}
     = \biggl(\frac{\ga_k}{\de_{k}}\biggr)^4,
\]   
which shows that $\uQ=\uQo=[4,\infty)$.
\end{example}

There is nothing special about the end points $2$, $3$, $\frac{10}{3}$ and $4$ 
(or the plane $\R^2$) in 
Example~\ref{ex1}. Indeed, in
the following example we  
indicate how one can construct 
a $1$-admissible weight $w$ in $\R^n$, $n\ge2$, such that
\begin{equation} \label{eq-ex-abcd-alt}
\lQo=(0,a], \quad
\lSo=(0,b], \quad
\uSo=[c,\infty)
    \quad \text{and} \quad
\uQo=[d,\infty),
\end{equation}
where $1<a<b<c<d$.
The reason for the condition $a>1$ 
is that we want
to obtain the $1$-admissibility of $w$ using Proposition~\ref{prop-suff-admiss-w},
see Remark~\ref{rmk-cor-w}.

\begin{example} \label{ex-abcd-alt}
For $1<a<b<c<d$ let
\[
    \la =\frac{(c-a)(d-b)}{(b-a)(d-c)}
\]
and 
\[
   \alp_k = 2^{-\la^k} \text{ and } 
\be_k = \al_k^{(d-b)/(d-c)} = \al_{k+1}^{(c-a)/(b-a)},
 \quad  k=0,1,2,\ldots.
\]
Note that $\la>1$ and thus $\al_k\to0$ as $k\to\infty$.
Also, $\al_{k+1}\ll\be_k\ll\al_k$.
Then the weight
\[
      w(\rho)=\begin{cases}
             \be_{k}^{c-a}\rho^{a-n} = \alp_{k+1}^{b-a}\rho^{a-n}, 
  & \text{if } \alp_{k+1} \le \rho \le \be_k, 
                     \ k=0,1,2,\ldots,\\
             \be_{k}^{c-d}\rho^{d-n} = \alp_{k}^{b-d}\rho^{d-n}, 
  & \text{if } \be_k \le \rho \le \alp_{k}, 
                     \ k=0,1,2,\ldots,\\
                     \alp_0, & \text{if } \rho \ge \alp_0,
        \end{cases}
\] 
is continuous and $1$-admissible on $\R^n$.
Without going into details, 
one then argues similarly to Example~\ref{ex1} to show that 
\eqref{eq-ex-abcd-alt} holds.
\end{example}

\section{Background results on metric spaces}
\label{sect-metric}

In this section we are going to introduce the
necessary background on Sobolev spaces and capacities in metric spaces.
Proofs of most of the results mentioned in 
the first half of this section can be found in the monographs
Bj\"orn--Bj\"orn~\cite{BBbook} and
Heinonen--Koskela--Shanmugalingam--Tyson~\cite{HKSTbook}.
Towards the end
of this section we obtain some new results.

We begin with the notion of upper gradients 
as defined by Heinonen and Kos\-ke\-la~\cite{HeKo98}
(who called them very weak gradients).

\begin{deff} \label{deff-ug}
A nonnegative Borel function $g$ on $X$ is an \emph{upper gradient} 
of an extended real-valued function $f$
on $X$ if for all (nonconstant, compact
and rectifiable) curves  
$\gamma \colon [0,l_{\gamma}] \to X$,
\begin{equation} \label{ug-cond}
        |f(\gamma(0)) - f(\gamma(l_{\gamma}))| \le \int_{\gamma} g\,ds,
\end{equation}
where we follow the convention that the left-hand side is $\infty$ 
whenever at least one of the 
terms therein is infinite.
If $g$ is a nonnegative measurable function on $X$
and if (\ref{ug-cond}) holds for \p-almost every curve (see below), 
then $g$ is a \emph{\p-weak upper gradient} of~$f$. 
\end{deff}

A \emph{curve} is a continuous mapping from an interval,
and a \emph{rectifiable} curve is a curve with finite length.
We will only consider curves which are nonconstant, compact
and 
rectifiable, and thus each curve can 
be parameterized by its arc length $ds$. 
A property is said to hold for \emph{\p-almost every curve}
if it fails only for a curve family $\Ga$ with zero \p-modulus, 
i.e.\ there exists $0\le\rho\in L^p(X)$ such that 
$\int_\ga \rho\,ds=\infty$ for every curve $\ga\in\Ga$.
Note that a \p-weak upper gradient \emph{need not} be a Borel function,
it is only required to be measurable.
On the other hand,
every measurable function $g$ can be modified on a set of measure zero
to obtain a Borel function, from which it follows that 
$\int_{\gamma} g\,ds$ is defined (with a value in $[0,\infty]$) for \p-almost every  
curve $\ga$. 

The \p-weak upper gradients were introduced in
Koskela--MacManus~\cite{KoMc}. It was also shown there
that if $g \in \Lp(X)$ is a \p-weak upper gradient of $f$,
then one can find a sequence $\{g_j\}_{j=1}^\infty$
of upper gradients of $f$ such that $g_j \to g$ in $L^p(X)$.
If $f$ has an upper gradient in $\Lp(X)$, then
it has a \emph{minimal \p-weak upper gradient} $g_f \in \Lp(X)$
in the sense that for every \p-weak upper gradient $g \in \Lp(X)$ of $f$ we have
$g_f \le g$ a.e., see Shan\-mu\-ga\-lin\-gam~\cite{Sh-harm}
and Haj\l asz~\cite{Haj03}. The minimal \p-weak upper gradient is well defined
up to a set of measure zero in the cone of nonnegative functions in $\Lp(X)$.
Following Shanmugalingam~\cite{Sh-rev}, 
we define a version of Sobolev spaces on the metric measure space $X$.

\begin{deff} \label{deff-Np}
For a measurable function $f:X\to\R^n$, let 
\[
        \|f\|_{\Np(X)} = \biggl( \int_X |f|^p \, \dmu 
                + \inf_g  \int_X g^p \, \dmu \biggr)^{1/p},
\]
where the infimum is taken over all upper gradients of $f$.
The \emph{Newtonian space} on $X$ is 
\[
        \Np (X) = \{f: \|f\|_{\Np(X)} <\infty \}.
\]
\end{deff}
% \medskip needed as def ends with display
\medskip

The space $\Np(X)/{\sim}$, where  $f \sim h$ if and only if $\|f-h\|_{\Np(X)}=0$,
is a Banach space and a lattice, see Shan\-mu\-ga\-lin\-gam~\cite{Sh-rev}.
In this paper we assume that functions in $\Np(X)$ are defined everywhere,
not just up to an equivalence class in the corresponding function space.
This is needed for the definition of upper gradients to make sense.
For a measurable set $E\subset X$, the Newtonian space $\Np(E)$ is defined by
considering $(E,d|_E,\mu|_E)$ as a metric space on its own. 
If $f,h \in \Nploc(X)$, then $g_f=g_h$ a.e.\ in $\{x \in X : f(x)=h(x)\}$,
in particular $g_{\min\{f,c\}}=g_f \chi_{f < c}$ for $c \in \R$.

\begin{deff}
The \emph{Sobolev \p-capacity} of an arbitrary set $E\subset X$ is
\[
\Cp(E) = \inf_u\|u\|_{\Np(X)}^p,
\]
where the infimum is taken over all $u \in \Np(X)$ such that
$u\geq 1$ on $E$.
\end{deff}

The Sobolev capacity is countably subadditive. 
We say that a property holds \emph{quasieverywhere} (q.e.)\ 
if the set of points  for which it fails 
has Sobolev capacity zero. 
The Sobolev  capacity is the correct gauge 
for distinguishing between two Newtonian functions. 
If $u \in \Np(X)$, then $u \sim v$ if and only if $u=v$ q.e.
Moreover, Corollary~3.3 in Shan\-mu\-ga\-lin\-gam~\cite{Sh-rev} shows that 
if $u,v \in \Np(X)$ and $u= v$ a.e., then $u=v$ q.e.
This is the main reason why, unlike in the classical Euclidean setting, 
we do not need to 
require the functions admissible in the definition of capacity to be $1$ in a 
neighbourhood of $E$. 
Theorem~4.5 in~\cite{Sh-rev} shows that for open $\Om\subset\R^n$, 
the quotient space $\Np(\Om)/{\sim}$ 
coincides with the usual Sobolev space $W^{1,p}(\Om)$. 
For weighted $\R^n$, the corresponding results can be found in 
Bj\"orn--Bj\"orn~\cite[Appendix~A.2]{BBbook}.
It can also be shown that in this case
$\Cp$ is the usual Sobolev capacity in (weighted
or unweighted) $\R^n$.

\begin{deff} \label{def-PI}
We say that $X$ supports a \emph{\p-Poincar\'e inequality at $x$} if
there exist constants $C>0$ and $\lambda \ge 1$
such that for all balls $B=B(x,r)$, 
all integrable functions $f$ on $X$, and all upper gradients $g$ of $f$, 
\[ 
        \vint_{B} |f-f_B| \,\dmu
        \le C r \biggl( \vint_{\lambda B} g^{p} \,\dmu \biggr)^{1/p},
\] 
where $ f_B 
 :=\vint_B f \,\dmu 
:= \int_B f\, d\mu/\mu(B)$.
If $C$ and $\lambda$ are independent of $x$, we say that
$X$ supports a \emph{\textup{(}global\/\textup{)} \p-Poincar\'e inequality}.
\end{deff}

In the definition of Poincar\'e inequality we can equivalently assume
that $g$ is a \p-weak upper gradient---see the comments above.
It was shown by Keith--Zhong~\cite{KeZh} that if $X$ is complete and $\mu$ 
is globally
doubling and supports a global \p-Poincar\'e inequality 
with $p>1$,  then $\mu$ actually
supports a global $p_0$-Poincar\'e inequality for some $p_0<p$. 
The completeness of $X$ is needed for Keith--Zhong's result, as shown by
Koskela~\cite{Koskela}.
In some of our  estimates we will need such a better $p_0$-Poincar\'e inequality
at~$x$, which (by Koskela's example) does not follow from the \p-Poincar\'e inequality
at~$x$.

If $X$ is complete and  $\mu$ is globally doubling
and supports a global \p-Poincar\'e inequality,
then 
the functions in $\Np(X)$
and those in $\Np(\Om)$, for open 
$\Om \subset X$, 
are \emph{quasicontinuous},
see Bj\"orn--Bj\"orn--Shan\-mu\-ga\-lin\-gam~\cite{BBS5}.
This means that in the Euclidean setting 
$\Np(\R^n)$ and $\Np(\Om)$ are the 
refined Sobolev spaces as defined in
Heinonen--Kilpel\"ainen--Martio~\cite[p.~96]{HeKiMa},
see Bj\"orn--Bj\"orn~\cite[Appendix~A.2]{BBbook} 
for a proof of this fact valid in weighted $\R^n$.
 
To be able to define the variational capacity we first
need a Newtonian space with zero boundary values.
We let, for an open set $\Om \subset X$,
\[
\Np_0(\Om)=\{f|_{\Om} : f \in \Np(X) \text{ and }
        f=0 \text{ on } X \setm \Om\}.
\]

\begin{deff}
Let $\Om\subset X$ be open. The \emph{variational 
\p-capacity} of $E\subset \Om$ with respect to $\Om$ is
\[
\cp(E,\Om) = \inf_u\int_{\Om} g_u^p\, d\mu,
\]
where the infimum is taken over all $u \in \Np_0(\Om)$
such that
$u\geq 1$ on $E$.
\end{deff}

Also the 
variational capacity 
is countably subadditive and coincides with the usual variational
capacity in the case when $\Om\subset\R^n$ is open
(see Bj\"orn--Bj\"orn~\cite[Theorem~5.1]{BBvarcap} for a proof
valid in weighted $\R^n$). 
We are next going to establish three new results
concerning the variational capacity.
Propositions~\ref{prop-Cp=0<=>cp=0} and~\ref{prop-zero-cap-capacitable}
will only be used in Proposition~\ref{prop:zero cap}
(and Example~\ref{ex-log-weights})
to prove a condition for a point to have positive capacity, 
while Proposition~\ref{lem-cpX} will only be used for proving
Propositions~\ref{prop:S-infty}
and~\ref{prop-cap-formula-radial}
(and in Example~\ref{ex-log-weights}), which 
deal with the
variational capacity taken with respect to the whole space. 
These results may also 
be of independent interest.

It is well known that if $X$ supports a global
$(p,p)$-Poincar\'e inequality
(i.e.\
a Poincar\'e inequality with an $L^p$ norm  instead of an $L^1$ norm
in the left-hand side),
then the variational and Sobolev capacities have
the same zero sets (if $\Om$ is bounded and $\Cp(X \setm \Om)>0$).
We will need the following generalization of this fact.
Since we do not have the same tools available, our proof
is different and more direct than those in the literature.
Note also that we only require a \p-Poincar\'e inequality (at $x$),
not a $(p,p)$-Poincar\'e inequality.

\begin{prop} \label{prop-Cp=0<=>cp=0}
Assume that $X$ supports a \p-Poincar\'e inequality at some $x\in X$,
that\/ $\Om$ is a bounded open set,
and that $E \subset \Om$.
Then\/ $\cp(E,\Om)=0$ if and only if $\Cp(E)=0$ or $\Cp(X\setm \Om)=0$.
\end{prop}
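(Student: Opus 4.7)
The plan is to prove the two implications separately; direction $(\Leftarrow)$ is straightforward, whereas $(\Rightarrow)$ is where the \p-Poincar\'e inequality at $x$ plays the essential role.

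For $(\Leftarrow)$, there are two subcases. If $\Cp(X\setm\Om)=0$, then \p-almost every curve avoids $X\setm\Om$, so the constant $g\equiv 0$ is a \p-weak upper gradient of $\chi_\Om$; since $\Om$ is bounded and hence $\mu(\Om)<\infty$, this function lies in $\Np_0(\Om)$, is admissible for $\cp(E,\Om)$, and has zero energy, yielding $\cp(E,\Om)=0$. If instead $\Cp(E)=0$, I exhaust $E$ by the sets $E_k:=\{y\in E:\dist(y,X\setm\Om)>1/k\}$, which cover $E$ as $\Om$ is open. For each $k$ I pick admissible $u_j\in\Np(X)$ for $\Cp(E_k)$ with $0\le u_j\le 1$ and $\|u_j\|_{\Np(X)}\to 0$, and multiply by the Lipschitz cutoff $\eta_k(y):=\min\{1,k\dist(y,X\setm\Om)\}$. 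The product rule for weak upper gradients gives $g_{\eta_k u_j}\le g_{u_j}+k u_j$, so $\eta_k u_j\in\Np_0(\Om)$ is admissible for $\cp(E_k,\Om)$ with vanishing energy, and countable subadditivity of $\cp$ yields $\cp(E,\Om)=0$.

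For $(\Rightarrow)$, assume $\cp(E,\Om)=0$ and $\Cp(X\setm\Om)>0$; we must show $\Cp(E)=0$. Pick a sequence $u_j\in\Np_0(\Om)$ with $0\le u_j\le 1$, $u_j\ge 1$ on $E$, and $\int g_{u_j}^p\,d\mu\to 0$ (truncation is harmless). Since $\Om$ is bounded, fix $R$ with $\Om\subset B:=B(x,R)$; the \p-Poincar\'e inequality at $x$ yields
\[
\vint_B|u_j-(u_j)_B|\,d\mu\le CR\Bigl(\vint_{\la B}g_{u_j}^p\,d\mu\Bigr)^{1/p}\to 0.
\]
The key is to deduce $(u_j)_B\to 0$: that gives $u_j\to 0$ in $L^1(B)$, the bounds $u_j\le 1$ and $\mu(\Om)<\infty$ upgrade this to $u_j\to 0$ in $\Lp(X)$, hence $\|u_j\|_{\Np(X)}\to 0$ and so $\Cp(E)=0$ since $u_j$ is admissible for $\Cp(E)$.

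The main obstacle is proving $(u_j)_B\to 0$ from the hypothesis $\Cp(X\setm\Om)>0$. If $\mu((X\setm\Om)\cap B)>0$, then $u_j=0$ on a set of positive measure in $B$ and the Poincar\'e oscillation estimate directly forces the average to vanish. In general, however, $\Cp>0$ need not imply $\mu>0$, so I would argue by contradiction: if $(u_j)_B\ge\delta>0$ along a subsequence, then $\min\{1,\delta^{-1}(1-u_j)\}$, multiplied by a Lipschitz cutoff supported in a slightly larger ball centered at $x$, should produce admissible functions for $\Cp(X\setm\Om)$ whose $\Np$-norms tend to $0$ by the Poincar\'e estimate together with the truncation, contradicting $\Cp(X\setm\Om)>0$. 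Making this construction quantitative---so that the $\Lp$-mass of the constructed admissible functions outside $\Om$ can be controlled even when $\mu(X\setm\Om)$ may be zero---is the principal technical hurdle, and forces the argument to use the Poincar\'e inequality at $x$ in full, rather than a mere comparison of measures.
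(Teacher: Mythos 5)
Your easy direction $(\Leftarrow)$ is correct, though more roundabout than needed: if $\Cp(E)=0$ then $\chi_E$ has $0$ as a \p-weak upper gradient and lies in $\Np_0(\Om)$ directly, so no exhaustion of $E$ is needed; similarly $\chi_\Om\in\Np_0(\Om)$ when $\Cp(X\setm\Om)=0$.

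The hard direction $(\Rightarrow)$ has a genuine gap, which you yourself partly acknowledge. Your strategy---show $(u_j)_B\to 0$, hence $u_j\to 0$ in $L^1(B)$, upgrade to $\Lp(X)$, conclude $\Cp(E)=0$---is sound in outline, but your contradiction argument does not close. If $(u_j)_B\ge\delta$ and you set $w_j=\min\{1,\delta^{-1}(1-u_j)\}$, then $w_j=1$ on the whole set $\{u_j\le 1-\delta\}$, which can carry mass comparable to $\mu(B)$: for instance if $u_j\equiv\delta$ on $B$ with $\delta$ small, then $w_j\equiv 1$ and $\int_B w_j^p\simeq\mu(B)$, so $\|w_j\|_{\Np}$ is not small even though the gradient term vanishes. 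So the $L^p$ mass of the constructed test function is not controlled, and the contradiction fails. The paper circumvents exactly this with a dichotomy that does not fix $\delta$ in advance: for each admissible $0\le u\le 1$, either $\mu(\{y\in B:u(y)\le\tfrac12\})\ge\tfrac12\mu(B)$ or $\mu(\{y\in B:u(y)\ge\tfrac12\})\ge\tfrac12\mu(B)$. In the first case one sets $v=(2u-1)_\limplus$ and in the second $v=(1-2u)_\limplus$; in either case $v$ vanishes on at least half of $B$ (so $v_B=|v-v_B|$ there and the Poincar\'e inequality for $B$ controls $v_B$), $g_v\le 2g_u$, and $\{v=1\}$ contains $E$ in the first case and $B\setm\Om$ in the second. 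Using $0\le v\le 1$ one then estimates $\CpB(\{v=1\})\le\int_B v\,d\mu+\int_B g_v^p\,d\mu=\mu(B)v_B+\int_B g_v^p\,d\mu\simle(\int_B g_u^p\,d\mu)^{1/p}+\int_B g_u^p\,d\mu$. Taking the infimum over admissible $u$ shows that at least one of $\CpB(E)$ and $\CpB(B\setm\Om)$ is zero; the latter is excluded by choosing $B\ni x$ large enough that $\Cp(B\setm\Om)>0$ (possible by countable subadditivity since $\Cp(X\setm\Om)>0$) together with Lemma~2.24 of Bj\"orn--Bj\"orn. This is the idea your sketch is missing.
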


The Poincar\'e assumption cannot be completely omitted, 
as is easily seen
by considering a nonconnected example, or a bounded ``bow-tie''
as in Example~5.5 in Bj\"orn--Bj\"orn~\cite{BBnonopen}.
However,
we actually do not need the full \p-Poincar\'e inequality
at $x$, since it is enough to have a \p-Poincar\'e inequality
for some large enough ball $B$ 
(i.e.\ such that $\Om \subset B$ and $\Cp(B \setm \Om)>0$).
This somewhat resembles 
the situation concerning
Friedrichs' inequality (also called Poincar\'e inequality for $\Np_0$)
and its role in the uniqueness of minimizers,
see the discussion in Section~5 in Bj\"orn--Bj\"orn~\cite{BBnonopen}.
For an easy example of a space which supports a Poincar\'e inequality for 
large balls but not for small balls, see Example~5.9 in \cite{BBnonopen}.

\begin{proof}
If $\Cp(E)=0$, then $u:=\chi_E \in \Np_0(\Om)$,
while if $\Cp(X \setm \Om)=0$, then $u:=\chi_\Om \in \Np_0(\Om)$. 
In both cases this yields that $\cp(E,\Om) \le \int_\Om g_u^p\,d\mu=0$.

Conversely, assume that $\cp(E,\Om)=0$ and that $\Cp(X \setm \Om)>0$.
We need to show that $\Cp(E)=0$.
Choose a ball $B$ centred at $x$ and containing $\Om$ such that $\Cp(B \setm \Om)>0$.
By Lemma~2.24 in Bj\"orn--Bj\"orn~\cite{BBbook}, also $\CpB(B \setm \Om)>0$,
where $\CpB$ is the Sobolev capacity 
with respect to the ambient space $B$.
Let $0 \le u \le 1$ be admissible for $\cp(E,\Om)$.
Then 
\[
\mu\bigl(\bigl\{y\in B:u(y)\le\tfrac12\bigr\}\bigr)\ge\tfrac12 \mu(B)
\quad \text{or} \quad
\mu\bigl(\bigl\{y\in B:u(y)\ge\tfrac12\bigr\}\bigr)\ge\tfrac12 \mu(B).
\]
In the former case we let $v=(2u-1)_\limplus:=\max\{2u-1,0\}$, 
while in the latter we let
$v=(1-2u)_\limplus$.
In both cases  $g_v\le2g_u$ and $\mu(A)\ge\tfrac12\mu(B)$, where
$A=\{y\in B:v(y)=0\}$.
Since $v_B=|v-v_B|$ 
in $A$, we have by the \p-Poincar\'e inequality for $B$ that
\[
    v_B = \vint_A |v-v_B|\,d\mu \le 2 \vint_B |v-v_B|\,d\mu 
       \simle \biggl(\int_B g_v^p \, d\mu\biggr)^{1/p}.
\]
Hence, as $0\le v\le1$   and $g_v\le2g_u$, we have
\begin{align*}
\CpB(\{y\in B:v(y)=1\}) &\le \int_B( v^p + g_v^p)\,d\mu
\le \int_B v \, d\mu + \int_B g_v^p \, d\mu \\
&= \mu(B) v_B + \int_B g_v^p \, d\mu 
\simle \biggl(\int_B g_u^p \, d\mu\biggr)^{1/p} + \int_B g_u^p \, d\mu,
\end{align*}
where the implicit constant in $\simle$ depends on $B$ but is independent of 
$u$. 
Taking infimum over all admissible $u$ shows that, 
depending on the choices of $v$, 
we have at least one of 
$\CpB(E)=0$ and
 $\CpB(B\setm\Om)=0$, the latter being impossible by the choice of $B$.
Thus $\CpB(E)=0$ and Lemma~2.24 in \cite{BBbook} completes the proof.
\end{proof}

If $X$ is  complete and  $\mu$ is globally doubling
and supports a global \p-Poincar\'e inequality,
then it is known that the variational capacity is an outer capacity,
i.e.\ if $E$ is a compact subset of $\Omega$
 then 
\[ 
\cp(E,\Om)=\inf_{\substack{G \text{ open} \\  E\subset G \subset \Om}} \cp(G,\Om),
\] 
see Bj\"orn--Bj\"orn--Shan\-mu\-ga\-lin\-gam~\cite[p.\ 1199]{BBS5}
and Theorem~6.19 in Bj\"orn--Bj\"orn~\cite{BBbook}.
We will need a version of this result for sets of 
zero capacity under our more general assumptions.
For the Sobolev capacity such a result was obtained
in~\cite{BBS5}, Proposition~1.4 
(which can also be found as Proposition~5.27 in \cite{BBbook}), 
under the assumption that $X$ is proper.
(Recall that a metric space $X$ is \emph{proper} if  
all closed bounded
subsets are compact. If $\mu$ is globally doubling,
then $X$ is proper if and only if $X$ is complete.)
A modification of that proof yields the following generalization,
which only requires local compactness 
near 
$E$ and at the same time
also gives the conclusion for the variational capacity.
This generalization
was partly inspired by the discussion of the corresponding
result in
Heinonen--Koskela--Shanmugalingam--Tyson~\cite{HKSTbook}.
In combination with Proposition~\ref{prop-Cp=0<=>cp=0},
Proposition~\ref{prop-zero-cap-capacitable} 
gives the outer capacity property for sets of 
zero variational capacity under very mild assumptions.

\begin{prop}\label{prop-zero-cap-capacitable}
Let\/ $\Om$ be an open set,
and let $E\subset \Om$ with $\Cp(E)=0$.
Assume that there is a locally compact open set $G \supset E$.
Then, for every $\eps>0$, there is  an open set $U\supset E$ with
\[
   \cp(U,\Om)<\eps \quad \text{and} \quad \Cp(U) < \eps.
\]
\end{prop}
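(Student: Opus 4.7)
The plan is to adapt the proof of Proposition~1.4 in~\cite{BBS5} (= Proposition~5.27 in~\cite{BBbook}), replacing global properness of $X$ by the local compactness of $G$ near $E$ and doing the construction with functions supported in $\Om$, so as to control $\Cp$ and $\cp(\cdot,\Om)$ simultaneously. First I would reduce the setup. Replacing $G$ by $G\cap\Om$---which is still locally compact, since any $x\in G\cap\Om$ has a precompact neighbourhood $W\subset G$, and intersecting $W$ with $\{d(\cdot,X\setm\Om)>\de\}$ for small $\de$ yields a precompact neighbourhood of $x$ in $G\cap\Om$---I may assume $G\subset\Om$. Since $X$ is separable, $G$ is Lindel\"of, and local compactness then produces a countable cover of $G$ by open sets $V_j$ with $\overline{V_j}$ compact and $\overline{V_j}\subset G$. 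Countable subadditivity of both $\Cp$ and $\cp(\cdot,\Om)$ reduces the claim to the case $E\subset V$ with $\overline V$ compact and $\overline V\subset G\subset\Om$. I then fix a slightly larger open set $V'$ with $\overline V\subset V'\subset\overline{V'}\subset G$, together with a Lipschitz cut-off $\phi$ satisfying $\phi=1$ on $\overline V$, $\phi=0$ outside $V'$, and $0\le\phi\le1$.

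Since $\Cp(E)=0$, for each $k\in\N$ and any given $\eta>0$ (to be chosen at the end) I pick $w_k\in\Np(X)$ with $0\le w_k\le 1$, $w_k\ge1$ on $E$, and $\|w_k\|_{\Np(X)}^p<4^{-k}\eta$. The truncated product $u_k:=\phi w_k$ vanishes outside $\overline{V'}\subset\Om$, so $u_k\in\Np_0(\Om)$, while $u_k\ge1$ on $E$ (as $\phi=1$ on $\overline V\supset E$) and $\|u_k\|_{\Np(X)}^p\le C(\phi)\,4^{-k}\eta$. The series $v:=\sum_k u_k$ then lies in $\Np_0(\Om)$, satisfies $\|v\|_{\Np(X)}^p\le C\eta$ by Minkowski's inequality (together with $g_v\le\sum_k g_{u_k}$ a.e.), and equals $+\infty$ on all of $E$.

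The main obstacle is now to pass from $v$---which need not be lower semicontinuous, since we do \emph{not} assume a Poincar\'e inequality and hence have no quasicontinuity at our disposal---to an \emph{open} set $U\supset E$ contained in $\{v>t\}$ for some fixed $t>0$. Here the local compactness of $\overline{V'}$ is essential, and, following the scheme of~\cite{BBS5} localised to the compact set $\overline{V'}$, I would pass to the lower semicontinuous envelope $\tilde v(x):=\liminf_{y\to x}v(y)$, which is lsc by construction. The crux is to arrange that $\tilde v=\infty$ on $E$; this I would do by an inductive refinement of the choice of the $w_k$, exploiting that $\overline E$ is compact in $V$ and that $\Cp(E)=0$ allows one, at each step, to make the partial sum $\sum_{j\le k}u_j$ exceed any prescribed threshold on a full open neighbourhood of $E$ inside $V$, without breaking the geometric-series bound on the norm.

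Given this, $U:=\{\tilde v>1\}$ is open, contains $E$, and $\tilde v$ (being $\le v$) is admissible both for $\Cp(U)$ and for $\cp(U,\Om)$, so
\[
\Cp(U)\le\|\tilde v\|_{\Np(X)}^p\le\|v\|_{\Np(X)}^p<C\eta
\quad\text{and}\quad
\cp(U,\Om)\le\int_\Om g_{\tilde v}^p\,\dmu<C\eta,
\]
and choosing $\eta<\eps/C$ at the beginning completes the proof.
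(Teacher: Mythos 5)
Your reduction steps (shrinking $G$ to $G\cap\Om$, the Lindel\"of/countable-subadditivity argument) agree with the outline the paper gives. The problem is the compact case, and there the proposal has a genuine gap precisely where the paper's argument does its real work.

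The paper's construction is qualitatively different and avoids the obstacle you run into. Since $\Cp(E)=0$, one may pick a \emph{lower semicontinuous} upper gradient $\rho$ of $\chi_E$ with $\int_V(\rho+1)^p\,d\mu<\eps$ for a suitable bounded open $V$ with $E\subset V\subset G\cap\Om$, and then set
\[
u(x)=\min\Bigl\{1,\inf_\ga\int_\ga(\rho+1)\,ds\Bigr\},
\]
the infimum over curves from $x$ to $X\setm V$ (constant curves included). This $u$ equals $1$ on $E$ (because $\rho$ is an upper gradient of $\chi_E$), vanishes off $V$, has $(\rho+1)\chi_V$ as an upper gradient, and---this is the crucial point---is automatically lower semicontinuous on the locally compact set $G$ by Lemma~3.3 in~\cite{BBS5}. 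The open set $U=\{u>\tfrac12\}$ then works directly; no envelope is needed.

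Your construction $v=\sum_k\phi w_k$ is not of this form, and the key step you describe---``an inductive refinement of the choice of the $w_k$'' that would make the lsc envelope $\tilde v$ infinite on $E$---is not carried out, and I do not see how it could be. Two concrete problems: first, since $0\le u_j\le 1$, the partial sum $\sum_{j\le k}u_j$ is bounded by $k$ everywhere, so ``exceeding any prescribed threshold'' on an open set is not free; more seriously, if $E$ is dense in $V$ (which happens, e.g.\ for countable dense sets with $\Cp(E)=0$), any open neighbourhood of $E$ is open and dense in $V$, and there is no mechanism---absent a Poincar\'e inequality and hence absent quasicontinuity---to force a fixed Newtonian partial sum with small norm to be uniformly large on such a set. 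Second, passing to the lsc envelope is not innocuous in the Newtonian setting: $\tilde v\le v$ does not place $\tilde v$ in $\Np(X)$, nor does it give you a \p-weak upper gradient of $\tilde v$ dominated by one of $v$, so your final norm estimates for $\Cp(U)$ and $\cp(U,\Om)$ are not justified either. The path-integral construction in the paper sidesteps both issues at once: the function is built to be lsc from the start, with an explicit upper gradient whose $L^p$-norm is under control.
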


We outline the main ideas of the proof,
see the above references for more details.

\begin{proof}[Sketch of proof.]
First assume that $\itoverline{G}$ is compact, 
and choose a bounded open set $V\supset E$ such that 
$V \subset G \cap \Om$ and $\int_V(\rho+1)^p\,d\mu<\eps$, where $\rho$ is a
lower semicontinuous upper gradient of $\chi_E\in\Np(X)$.
The function $u(x):=\min\{1,\inf_\ga \int_\ga(\rho+1)\,ds\}$, with the infimum taken 
over all curves connecting $x$ to $X\setm V$
(including constant curves), 
has $(\rho+1)\chi_V$ as an upper gradient,
and $u=1$ in $E$.
Lemma~3.3 in \cite{BBS5} 
shows that $u$ is lower semicontinuous in $G$ and hence everywhere,
since $u=0$ in $X \setm V$ by construction.
This also shows that $u \in \Np_0(\Om)$.
Using $u$ as a test function for the level set $U:=\{x:
u(x)>\tfrac12\}$ 
shows that
$\cp(U,\Om)\simle\eps$ and $\Cp(U)\simle\eps$, and proves the claim 
in this case.

If $G$ is merely locally compact, we use separability
to find a suitable countable cover of $E$, and then
conclude the result using the countable subadditivity of the 
capacities.
\end{proof}

A direct consequence of Proposition~\ref{prop-zero-cap-capacitable}
is that the assumption
that $X$ is proper can be replaced by the assumption that
$\Omega$ is locally compact in
Theorem~5.29 and Propositions~5.28 and~5.33 in
Bj\"orn--Bj\"orn~\cite{BBbook}, see also 
Bj\"orn--Bj\"orn--Shan\-mu\-ga\-lin\-gam~\cite{BBS5} and 
Heinonen--Koskela--Shanmugalingam--Tyson~\cite{HKSTbook}.

We will also need the following result.

\begin{lem} \label{lem-cpX}
Let $E \subset X$ be bounded and let $x\in X$.
Then 
\[ 
    \cp(E,X)=\lim_{r \to \infty} \cp(E,B(x,r)).
\]
\end{lem}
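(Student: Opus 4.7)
The plan is to prove the two inequalities $\cp(E,X)\le\liminf_{r\to\infty}\cp(E,B(x,r))$ and $\limsup_{r\to\infty}\cp(E,B(x,r))\le\cp(E,X)$. The first is elementary monotonicity: since $E$ is bounded, $E\subset B(x,r)$ for all sufficiently large $r$, and any admissible test function $u$ for $\cp(E,B(x,r))$ extends by zero to one admissible for $\cp(E,B(x,R))$ whenever $R>r$ and for $\cp(E,X)$, with unchanged upper-gradient energy (since $g_u=0$ outside the original ball). This makes $r\mapsto\cp(E,B(x,r))$ eventually nonincreasing and bounded below by $\cp(E,X)$, so the limit exists in $[\cp(E,X),\infty]$ and dominates $\cp(E,X)$.

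For the reverse direction I would fix $\varepsilon>0$ and pick $u\in\Np_0(X)=\Np(X)$ (the identification uses $X\setm X=\emptyset$) with $u\ge1$ on $E$ and $\int_X g_u^p\,d\mu\le\cp(E,X)+\varepsilon$. Replacing $u$ by $\min(u_\limplus,1)$ only shrinks the minimal weak upper gradient, so I may assume $0\le u\le1$. To produce a compactly supported replacement I multiply by the radial Lipschitz cutoff
\[
\eta_r(y)=\max\bigl(\min(2-d(x,y)/r,\,1),\,0\bigr),
\]
which equals $1$ on $B(x,r)$, vanishes outside $B(x,2r)$, and is $(1/r)$-Lipschitz. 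The product $v_r:=u\eta_r$ then lies in $\Np_0(B(x,2r))$, coincides with $u\ge1$ on $E$ once $E\subset B(x,r)$, and the product rule for upper gradients (applicable since both factors lie in $[0,1]$) yields $g_{v_r}\le\eta_r g_u+u\,g_{\eta_r}\le g_u+u/r$.

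Plugging $v_r$ into the definition of $\cp(E,B(x,2r))$ and invoking the elementary Young-type inequality $(a+b)^p\le(1+\delta)a^p+C_\delta b^p$ for $\delta>0$ gives
\[
\cp(E,B(x,2r))\le\int_X g_{v_r}^p\,d\mu\le(1+\delta)\int_X g_u^p\,d\mu+\frac{C_\delta}{r^p}\int_X u^p\,d\mu.
\]
Since $u\in L^p(X)$, the last term vanishes as $r\to\infty$; then sending $\delta\to0$ and $\varepsilon\to0$ delivers $\limsup_{r\to\infty}\cp(E,B(x,r))\le\cp(E,X)$, and the lemma follows. No deep obstacle arises; the only points requiring mild attention are the product-rule bound on $g_{v_r}$ (which is the reason for normalizing $u$ to $[0,1]$) and the observation that the $r^{-p}$ error term is controlled precisely because $u\in L^p(X)$.
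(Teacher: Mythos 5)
Your proof is correct and follows essentially the same route as the paper's: choose a nearly optimal $u\in\Np(X)$, multiply by a Lipschitz radial cutoff to land in $\Np_0(B(x,2r))$, and let $r\to\infty$. The paper uses a fixed-width cutoff $\eta_n(y)=(1-\dist(y,B(x,n)))_\limplus$ and argues $u\eta_n\to u$ in $\Np(X)$, whereas you use a width-$r$ cutoff and the Young-type inequality to control the energy; both are equivalent ways of exploiting $u,g_u\in L^p(X)$.
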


\begin{proof}
That $\cp(E,X)\le \lim_{r \to \infty} \cp(E,B(x,r))$ is trivial.
To prove the converse, we may assume that $\cp(E,X) < \infty$.
Let $\eps >0$ and let $u$ be admissible
for $\cp(E,X)$ and such that $\int_X g_u^p \, d\mu < \cp(E,X) + \eps$. 
Then $u_n:=u\eta_n\to u$ in $\Np(X)$, as $n \to \infty$, where 
$\eta_n(y)=(1-\dist(y,B(x,n))_\limplus$.
Hence,
\[
\lim_{n \to \infty} \cp(E,B(x,2n)) 
     \le \lim_{n \to \infty} \int_X g_{u_n}^p\,d\mu
     \le \cp(E,X) + \eps.
\]
Letting $\eps \to 0$ concludes the proof.
\end{proof}

Our lower bound estimates for the capacities are all based on the following
telescoping argument, which is well-known under the assumptions
that $\mu$ is globally doubling and supports a
global \p-Poincar\'e inequality. 
However, it is enough to require the \p-Poincar\'e inequality, as well as 
the doubling and reverse-doubling
conditions, at $x$ only.
We therefore recall the short proof.

\begin{lem}\label{lem:chain estimate}
Assume that $\mu$ is doubling and reverse-doubling at $x$ and 
supports a \p-Poincar\'e inequality at $x$.
Let\/ 
$0<r < R \le \diam X /2\tau$,
where $\tau>1$ is the constant from the reverse-doubling 
condition~\eqref{eq:rev-doubling-x0}. 
Write $r_k=2^k r$ and $B^k=B(x,r_k)$ for $k\in\Z$, and let $k_0$ be such that
$r_{k_0}
\leq R < r_{k_0+1}$. 
Then for any $u\in N_0^{1,p}(B_{R})$
we have
\begin{equation}\label{eq:chain estimte}
|u_{B_r}| \simle \sum_{k=1}^{k_0+1} r_k \biggl(\vint_{\lambda B^k} g_u^p \,d\mu\biggr)^{1/p},
\end{equation}
where $\la$ is the dilation constant in the \p-Poincar\'e inequality at $x$.
\end{lem}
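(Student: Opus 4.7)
The plan is a standard telescoping of means along the dyadic balls $B^k$, with the $N^{1,p}_0$ hypothesis handled by a zero-set argument powered by reverse-doubling. First, I would extend $u$ by zero to all of $X$. For some $K \ge k_0 + 1$ to be chosen, split
$$|u_{B_r}| = |u_{B^0}| \le \sum_{k=0}^{K-1} |u_{B^k} - u_{B^{k+1}}| + |u_{B^K}|.$$
Each difference in the sum is estimated in the classical way: insert $u_{B^{k+1}}$ into the average, bound the mean over $B^k$ by a (doubling-adjusted) mean over $B^{k+1}$, and apply the \p-Poincar\'e inequality at $x$ on $B^{k+1}$, obtaining
$$|u_{B^k} - u_{B^{k+1}}| \le \vint_{B^k} |u - u_{B^{k+1}}|\, d\mu \simle \vint_{B^{k+1}} |u - u_{B^{k+1}}|\, d\mu \simle r_{k+1} \biggl(\vint_{\lambda B^{k+1}} g_u^p\, d\mu\biggr)^{1/p}.$$

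The tail term $|u_{B^K}|$ is handled by exploiting that $u\equiv 0$ on $B^K\setm B_R$. Using the elementary bound
$$|u_B| \le \frac{\mu(B)}{\mu(A)} \vint_B |u - u_B|\, d\mu \qquad \text{whenever $u = 0$ on $A\subset B$},$$
applied with $A = B^K \setm B_R$, together with Poincar\'e on $B^K$, yields $|u_{B^K}| \simle r_K (\vint_{\lambda B^K} g_u^p\, d\mu)^{1/p}$, \emph{provided} $\mu(B^K\setm B_R) \simge \mu(B^K)$. To secure this, I would iterate the reverse-doubling condition \eqref{eq:rev-doubling-x0} starting at scale $R$ until $\mu(B^K) \ge 2\mu(B_R)$; the number of iterations depends only on $\tau$ and $\gamma$, and the hypothesis $R \le \diam X / 2\tau$ provides room for at least the first application (with further iterations staying below $\diam X$ by a fixed factor).

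Finally, the extra summands with $k_0+1 < k \le K$, together with the tail contribution just obtained, must be absorbed into the $k = k_0+1$ term of the claimed sum. For these finitely many indices (with $K - k_0$ bounded in terms of $\tau,\gamma$) one has $r_k \simeq r_{k_0+1}$, and by doubling at $x$ also $\mu(\lambda B^k) \simeq \mu(\lambda B^{k_0+1})$; since $g_u$ is supported in $B_R \subset \lambda B^{k_0+1}$, the numerator integrals coincide. Hence each extra contribution is bounded by a constant multiple of $r_{k_0+1}(\vint_{\lambda B^{k_0+1}} g_u^p\, d\mu)^{1/p}$, producing \eqref{eq:chain estimte}.

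The main obstacle is the last bookkeeping step: ensuring that $K$ can be chosen with $r_K$ still within the permissible range and that the iteration of reverse-doubling introduces only constants depending on $(\tau,\gamma,C_\mu,\lambda,p)$. Apart from this, the argument is the standard Poincar\'e/chain estimate familiar from metric space analysis; the role of reverse-doubling is solely to guarantee that a ball only a bounded number of dyadic levels above $B_R$ already has comparably more mass than $B_R$, which is what makes the zero-set argument for the tail work.
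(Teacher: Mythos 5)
Your proposal follows essentially the same strategy as the paper's proof: telescope the means over dyadic balls, apply the Poincar\'e inequality to each difference, and handle the tail by pairing up a ball above scale $R$ with an annulus of comparable mass where $u$ vanishes. The paper's bookkeeping is slightly different and a bit leaner: rather than prolonging the dyadic chain to some $B^K$, it sets $A=B_{\tau R}\setm B_R$ and $B^*=B_{\tau R}\cup B_{2R}$, inserts the two terms $|u_{B^{k_0+1}}-u_{B^*}|$ and $|u_A-u_{B^*}|$, and uses that $\mu(A)\simeq\mu(B_{\tau R})\simeq\mu(B^*)\simeq\mu(B^{k_0+1})$ to reduce both to $\vint_{B^*}|u-u_{B^*}|\,d\mu$, after which one Poincar\'e estimate and doubling finish the job.

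Concerning the step you flag as the main obstacle: the concern is real as you phrased it, but it is self-inflicted and disappears with a cleaner target. You do \emph{not} need $\mu(B^K)\ge 2\mu(B_R)$, only $\mu(B^K\setm B_R)\simge\mu(B^K)$, and for that a single application of reverse-doubling suffices. Take $K$ to be the smallest index with $r_K\ge\tau R$ (so automatically $K\ge k_0+1$ and $r_K<2\tau R$, hence $K-k_0$ is bounded in terms of $\tau$). Since $R\le\diam X/2\tau$, the condition~\eqref{eq:rev-doubling-x0} applies at radius $R$ and gives $\mu(B_{\tau R})\ge\gamma\mu(B_R)$, whence
\[
\mu(B^K\setm B_R)\ \ge\ \mu(B_{\tau R})-\mu(B_R)\ \ge\ \Bigl(1-\frac1\gamma\Bigr)\mu(B_{\tau R})\ \simeq\ \mu(B^K),
\]
the last comparison by doubling because $r_K$ and $\tau R$ differ by at most a factor $2$. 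No iteration of reverse-doubling, and hence no worry about radii escaping the admissible range, is needed; this is exactly why the hypothesis on $R$ is stated with the factor $2\tau$. With this repair your argument is complete and equivalent to the paper's.
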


\begin{proof}
For $u\in N_0^{1,p}(B_{R})$ we have $u_A=0$, where $A={B_{\tau R}\setm B_R}$.
Let $B^* = B_{\tau R}\cup B_{2R}$. 
Then 
\begin{align*}
|u_{B_r}| & \le |u_{B_r}-u_{B^{k_0+1}}| + |u_{B^{k_0+1}}-u_{A}| \\
  & \le \sum_{k=1}^{k_0+1}|u_{B^{k}}-u_{B^{k-1}}| + |u_{B^{k_0+1}}-u_{B^*}| 
      + |u_{A}-u_{B^*}|.
\end{align*}
Since $\mu$ is doubling and reverse-doubling at $x$, it is easy to verify that 
\[
\mu(A)\simeq\mu(B_{\tau R})\simeq\mu(B^*)\simeq\mu(B^{k_0+1}).
\]
The doubling condition and \p-Poincar\'e inequality at $x$,
together with the fact that $B^{k_0+1}\subset B^*$ and $A\subset B^*$,
then show that
\begin{align*}
|u_{B_r}| 
  & \simle \sum_{k=1}^{k_0+1} \vint_{B^{k}}|u-u_{B^{k}}|\,d\mu  
         +  \vint_{B^*}|u-u_{B^*}|\,d\mu\\
  & \simle \sum_{k=1}^{k_0+1} r_k \biggl(\vint_{\lambda B^k} g_u^p \,d\mu\biggr)^{1/p}
         + R\biggl(\vint_{\lambda B^*} g_u^p \,d\mu\biggr)^{1/p}.
\end{align*}
The claim follows, since the last integral  is comparable to 
$\vint_{\lambda B^{k_0+1}} g_u^p \,d\mu$.  
\end{proof}

\begin{remark} \label{rmk-Q-vs-Q0}
In the forthcoming sections we give several different capacity
estimates involving the exponent sets $\lQ$ and $\uQ$. 
In these results
(and in Lemma~\ref{lem:chain estimate} above),
the implicit constants in $\simeq$, $\simle$
and $\simge$
will always be independent of $r$ and $R$, but they 
may depend on $x$, $X$, $\mu$, $p$
and (the auxiliary exponent) $q$. 
The dependence on $x$, $X$
and $\mu$ will only be through the constants
in the doubling, reverse-doubling
and Poincar\'e assumptions, as well as through
the constants $C_q$ in the definitions of
the $Q$-sets. 
In particular, if these conditions hold in all of $X$
with uniform constants, then we obtain capacity estimates which are independent of $x$ as well.

There are also corresponding 
estimates involving $\lQo$, $\lQi$, 
$\uQo$ and $\uQi$, which are just easy reformulations with appropriate restrictions
on the radii, viz.\ $R\le R_0$ for the $\lQo$- and $\uQo$-sets, and $r\ge R_0$
for the $\lQi$- and $\uQi$-sets, where $0<R_0<\infty$ is 
fixed, cf.\ 
Theorems~\ref{thm:intro} and~\ref{thm:borderline-intro}.
In these restricted estimates, 
as well as in the estimates in Section~\ref{sect-S} 
involving the $S$-sets, the
implicit constants in $\simeq$, $\simle$ and $\simge$ will 
in addition depend on $R_0$.
Observe also that, by e.g.\ Lemmas~\ref{lem-3} and~\ref{lem-2},
the exponent sets are independent of $R_0$, but the
constants $C_q$ do depend on the range of radii.

For these restricted estimates one can also weaken the assumptions a little:
The doubling and reverse-doubling conditions and the Poincar\'e inequality 
are only needed for balls with radii 
in the considered range, i.e.\ for $r\le R_0\max\{2,\tau\}$
or for $r\ge 2R_0$.
Arguing as in Lemma~\ref{lem-2},
it is easily seen that in the case of the doubling condition (but not for 
reverse-doubling and the Poincar\'e inequality) this is equivalent to 
assuming doubling for all $r\le1$ or $r\ge1$, respectively.
For the reverse-doubling and the Poincar\'e inequality, the range of radii for which they hold
is however essential, as can be seen by e.g.\ 
letting $X$ be the union of two disjoint closed balls in $\R^n$.

The factor 2 in the above bounds on radii is only dictated by the dyadic balls in
the proof of Lemma~\ref{lem:chain estimate} and can equivalently be replaced by any 
$\s>1$, upon correspondingly changing the choice of balls therein.
Again, this will be reflected in the implicit constants. 
\end{remark}

\section{Upper bounds for capacity}
\label{sect-5}

\emph{From now on we make the general assumption that
$\mu$ is doubling at $x$.
Recall also that\/ $1\le p<\infty$.}

\medskip

The following simple upper bound for capacity
is valid for any $1\le p < \infty$. 
Note that we do not need any Poincar\'e inequality
(nor reverse-doubling)
to obtain any of our upper bound estimates.

\begin{prop}\label{prop:simple upper bound}
Let\/  $0<2r\leq R$. Then 
\[
\cp(B_r,B_{R}) \simle \min\biggl\{\frac{\mu(B_r)}{r^{p}},\frac{\mu(B_R)}{R^{p}}\biggr\}.
\]
\end{prop}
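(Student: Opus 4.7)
The plan is to construct two explicit Lipschitz test functions, one yielding each side of the minimum, and then take the better of the two bounds.

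For the first estimate $\cp(B_r,B_R)\simle \mu(B_r)/r^p$, I would use
\[
u(y) = \min\biggl\{1,\,\Bigl(2 - \frac{d(x,y)}{r}\Bigr)_\limplus\biggr\},
\]
which equals $1$ on $B_r$, vanishes outside $B_{2r}$ (note $B_{2r}\subset B_R$ since $2r\le R$), and is $(1/r)$-Lipschitz, so $g:=(1/r)\chi_{B_{2r}\setm B_r}$ is an upper gradient of $u$. Thus $u\in\Np_0(B_R)$ is admissible for $\cp(B_r,B_R)$, and
\[
\cp(B_r,B_R)\le \int_{B_R} g^p\,d\mu \le \frac{\mu(B_{2r})}{r^p}\simle \frac{\mu(B_r)}{r^p}
\]
by the doubling condition at $x$.

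For the second estimate $\cp(B_r,B_R)\simle \mu(B_R)/R^p$, I would use
\[
v(y)=\min\biggl\{1,\,2\Bigl(1-\frac{d(x,y)}{R}\Bigr)_\limplus\biggr\},
\]
which equals $1$ on $B_{R/2}\supset B_r$ (since $r\le R/2$), vanishes outside $B_R$, and is $(2/R)$-Lipschitz; hence $g:=(2/R)\chi_{B_R\setm B_{R/2}}$ is an upper gradient and $v\in \Np_0(B_R)$ is admissible. This gives
\[
\cp(B_r,B_R)\le \int_{B_R} g^p\,d\mu\le \frac{2^p\,\mu(B_R)}{R^p}.
\]

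Taking the minimum of the two bounds yields the claim. There is essentially no obstacle here: the only subtlety worth mentioning is the justification that the indicator-type functions $g$ are genuine upper gradients of the truncated Lipschitz functions (standard, and they can be replaced by Borel-measurable versions without changing the integrals), and the invocation of doubling at $x$ to pass from $\mu(B_{2r})$ to $\mu(B_r)$ in the first case. No Poincaré inequality or reverse-doubling is needed.
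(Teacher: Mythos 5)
Your proof is correct and essentially identical to the paper's: the paper also tests with two Lipschitz cut-offs, $u_r(y)=\bigl(1-\dist(y,B_r)/r\bigr)_\limplus$ and $u_R(y)=\bigl(1-\dist(y,B_{R/2})/(R/2)\bigr)_\limplus$, which coincide with your $u$ and $v$ up to cosmetic reformulation, and then invokes doubling at $x$ exactly as you do.
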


For $p\in\lQ$ (resp.\ $p\in\uQ$), the first (resp.\ second)
term in the minimum 
gives the sharper estimate, but for
$p$ in between the $Q$-sets the minimum can vary depending on the 
radii, as can be seen in Example~\ref{ex1-partb}. 
See Section~\ref{sect:interior} for corresponding lower estimates.

It is essential to bound $r$ away from $R$ in Proposition~\ref{prop:simple upper bound} 
 since in general
 $\cp(B_r,B_{R})\to \infty$ as $r\to R$. This is apparent and well-known in
 unweighted $\R^n$ (cf.\ Example~2.12
 in Heinonen--Kilpel\"ainen--Martio~\cite{HeKiMa}), 
 but similar behaviour is present in more general metric spaces as well. 
 (This restriction should thus be taken into account in the upper bounds in~\cite{CaDaGa2} 
 and~\cite{GaMa} as well.)
Capacity of thin annuli (with $R/2<r<R$) in the metric setting is studied
in~\cite{BBL-thin}.

\begin{proof}
 Take 
\[u_r(y)=\biggl(1-\frac{\dist(y,B_r)}{r}\biggr)_\limplus\quad \text{and} \quad 
  u_R(y)=\biggl(1-\frac{\dist(y,B_{R/2})}{R/2}\biggr)_\limplus.\]
  Both of these are admissible for $\cp(B_r,B_{R})$, and clearly
(by doubling),
\[
\int_{B_{R}} g_{u_r}^p \,d\mu \le \frac{\mu(B_{2r})}{r^{p}} 
\simle \frac{\mu(B_r)}{r^{p}}
\quad \text{and} \quad
\int_{B_{R}} g_{u_R}^p \,d\mu \le \frac{\mu(B_{R})}{(R/2)^{p}} 
\simle \frac{\mu(B_{R})}{R^{p}}. 
\qedhere
\]
\end{proof}

The 
following logarithmic upper bounds  
are particularly useful in the borderline cases $p=\max\lQ$ and $p=\min\uQ$. 
These estimates 
are valid also for $p=1$, as well as for $p\in\interior\lQ$ and 
$p\in\interior\uQ$, but in these cases 
 Proposition~\ref{prop:simple upper bound} actually gives better upper bounds 
 for $\cp(B_r,B_R)$.
Note also that even for the borderline cases $p=\max\lQ$ and $p=\min\uQ$,
the estimates in Proposition~\ref{prop:simple upper bound} can be sharp,
and better than those in Proposition~\ref{prop:upper bounds
  borderline} below,
as shown at the end of Example~\ref{ex1-partb}.

\begin{prop}\label{prop:upper bounds borderline}
Let\/ $0<2r\le R$. 

\begin{enumerate}
\item \label{it3-a}
If $p\in \lQ$, 
then
\begin{equation} \label{eq-it3-a}
\cp(B_r,B_{R})\simle \frac{\mu(B_{R})}{R^{p}}\biggl(\log\frac R r\biggr)^{1-p}.
\end{equation}

\item \label{it3-b}
 If $p\in \uQ$, 
then
\begin{equation} \label{eq-it3-b}
\cp(B_r,B_{R})\simle \frac{\mu(B_r)}{r^{p}}\biggl(\log\frac R r\biggr)^{1-p}.
\end{equation}
\end{enumerate}
\end{prop}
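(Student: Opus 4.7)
The plan is to test the capacity $\cp(B_r,B_R)$ against a single \emph{logarithmic} cut-off function rather than the affine cut-off used in Proposition~\ref{prop:simple upper bound}. Set
\[
 \phi(t) = \min\biggl\{1,\, \max\Bigl\{0,\, \frac{\log(R/t)}{\log(R/r)}\Bigr\}\biggr\},\qquad t>0,
\]
and define $u(y)=\phi(d(x,y))$. Then $u=1$ on $B_r$, $u=0$ outside $B_R$, and since $\phi$ is locally Lipschitz with $|\phi'(t)| = (t\log(R/r))^{-1}$ on $(r,R)$ (and $0$ elsewhere), the function
\[
  g(y) = \frac{\chi_{B_R\setm B_r}(y)}{d(x,y)\log(R/r)}
\]
is an upper gradient of $u$. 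Hence $u$ is admissible for $\cp(B_r,B_R)$, and
\[
  \cp(B_r,B_R) \le \int_{B_R\setm B_r} g^p\,d\mu = \frac{1}{(\log(R/r))^p}\int_{B_R\setm B_r} \frac{d\mu(y)}{d(x,y)^p}.
\]

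The key step is to estimate the annular integral using the $Q$-sets. I would decompose $B_R\setm B_r$ dyadically by setting $r_k=2^k r$ and $B^k=B_{r_k}$, and choosing $k_0$ with $r_{k_0}\le R<r_{k_0+1}$, so that $k_0+1\simeq \log(R/r)$. On each shell $B^{k+1}\setm B^k$ we have $d(x,y)\ge r_k \simeq r_{k+1}$, and the doubling assumption at $x$ yields
\[
  \int_{B_R\setm B_r}\frac{d\mu}{d(x,y)^p} \simle \sum_{k=0}^{k_0}\frac{\mu(B^{k+1})}{r_{k+1}^p}.
\]
For part \ref{it3-a}, the hypothesis $p\in\lQ$ (together with doubling to handle the final shell where $r_{k_0+1}$ slightly exceeds $R$) gives $\mu(B^{k+1})/r_{k+1}^p \simle \mu(B_R)/R^p$ uniformly in $k$. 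Summing the $\simeq \log(R/r)$ terms produces the bound $(\mu(B_R)/R^p)\log(R/r)$, and plugging back yields \eqref{eq-it3-a}. For part \ref{it3-b}, the hypothesis $p\in\uQ$ instead gives $\mu(B^{k+1})/r_{k+1}^p \simle \mu(B_r)/r^p$ uniformly in $k$, and the same summation produces \eqref{eq-it3-b}.

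The only subtle point is checking that $g$ genuinely serves as an upper gradient in the metric sense; this follows because $d(x,\cdot)$ is $1$-Lipschitz and $\phi$ is Lipschitz on each interval where it is smooth, so the chain-rule upper gradient bound applies. A second minor point is the $k=k_0$ term, where $r_{k_0+1}>R$; here one applies doubling at $x$ once to replace $\mu(B^{k_0+1})$ by a constant multiple of $\mu(B_R)$ before invoking the $Q$-set inequality. Aside from these bookkeeping issues, the argument is a direct logarithmic-cutoff computation and requires neither a Poincar\'e inequality nor reverse-doubling, consistent with the generality of the statement.
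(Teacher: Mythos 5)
Your proposal is correct and follows essentially the same route as the paper: the identical logarithmic cut-off $u$ with upper gradient $g(y) = \chi_{B_R\setm B_r}(y)/(d(x,y)\log(R/r))$, the same dyadic shell decomposition with $r_k=2^k r$, and the same use of the $Q$-set hypothesis to bound each term $\mu(B^{k})/r_{k}^{p}$ by a constant multiple of $\mu(B_R)/R^p$ (resp.\ $\mu(B_r)/r^p$), followed by multiplying the $\simeq\log(R/r)$ terms against the $\log^{-p}(R/r)$ prefactor. Your remark about the single extra doubling step needed for the outermost shell where $r_{k_0+1}>R$, and your observation that neither a Poincar\'e inequality nor reverse-doubling is required, are both accurate and match the paper's treatment.
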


{Examples~\ref{ex-log-weights}\,(b) and
\ref{ex-log-weights-large}\,(b) show that these estimates are sharp.}

\begin{proof}
Choose 
\[
u(y)=\min\biggl\{1,\frac{\log(R/d(y,x))}{\log(R/r)}\biggr\}_\limplus
\quad  \text{and} \quad
g(y) = \frac{\chi_{B_R \setm B_r}}{\log(R/r)  d(y,x)}.
\]
Then $u$ is admissible for  $\cp(B_r,B_{R})$, 
and $g$ is a \p-weak upper gradient of $u$,
by Theorem~2.16  in Bj\"orn--Bj\"orn~\cite{BBbook}.
Write $r_k=2^k r$ and $B^k=B(x,r_k)$, and let $k_0\in\Z$ be such that 
$r_{k_0}\le R < r_{k_0+1}$.
Then
\begin{equation}\label{eq:up border}\begin{split}
 \cp(B_r,B_{R}) & \leq \int_{B_{R}} g^p\, \dmu \leq 
  \sum_{k=1}^{k_0+1} \int_{B^k\setminus B^{k-1}} g^p\,\dmu
   \simle \frac{1}{\log^p(R/r)} \sum_{k=1}^{k_0+1} \frac{\mu(B^k)}{r_k^{p}}.
\end{split}
\end{equation}
For 
$p\in\lQ$ we have that $r_k^{-p}\mu(B^k)\simle R^{-p}\mu(B_{R})$ 
when $1 \le k \le k_0+1$,
and for $p\in\uQ$ that $r_k^{-p}\mu(B^k)\simle r^{-p}\mu(B_r)$ 
for all $k \ge 1$.
Since $0<r\le R/2$, we have $k_0+1\simle\log(R/r)$, 
and so both claims follow from \eqref{eq:up border}. 
\end{proof}

\section{Lower bounds for capacity}\label{sect:interior}

The results in this section complement the upper
bounds in Section~\ref{sect-5}, and 
for $p$ in the interior of (one of)
the $Q$-sets these together yield 
the sharp estimates announced in Theorem~\ref{thm:intro}. 
For $p$ in between the $Q$-sets, the lower and upper bounds do not meet, but we shall
see in Proposition~\ref{prop:low bounds beyond borderline} that the lower 
bounds indicate the distance from $p$ to the corresponding $Q$-set.
Example~\ref{ex1-partb} shows that in this case both the upper bounds in 
Proposition~\ref{prop:simple upper bound} and the lower 
bounds~\eqref{eq:low bounds beyond borderline} and~\eqref{eq:up bounds beyond borderline}
in Proposition~\ref{prop:low bounds beyond borderline}
are optimal.
See also Proposition~\ref{prop-sharp}, which further 
demonstrates the sharpness of these estimates.

Also note that for the lower bounds without logarithmic terms we do not need the
restriction $2r\le R$, since the capacity of thin annuli is minorized by the capacity
of thick annuli.
In the borderline cases, where $\log(R/r)$ plays a role, the restriction
 $2r\le R$ 
is still needed. 
As in Lemma~\ref{lem:chain estimate}, we however require that $R \le \diam X/2\tau$,
where $\tau>1$ is the constant from the reverse-doubling 
condition~\eqref{eq:rev-doubling-x0}. 
See Remark~\ref{rmk-Q-vs-Q0} for comments on how the choice of the involved parameters
influences the implicit constants in $\simeq$, $\simle$ and $\simge$.

\begin{prop}\label{prop:low bounds}
Assume that $\mu$ is reverse-doubling at $x$ and
supports  a \p-Poincar\'e inequality at $x$.
Let\/ $0<r<R \le  \diam X/2\tau$. 
\begin{enumerate}
\item \label{it-1-interior} If $p\in \interior\lQ$, then
\begin{equation} \label{eq-it-1}
\cp(B_r,B_{R})\simge \frac{\mu(B_r)}{r^{p}}.
\end{equation}

\item \label{it-2-interior}
If $p\in \interior\uQ$, then
\begin{equation} \label{eq-it-2}
\cp(B_r,B_{R})\simge \frac{\mu(B_{R})}{R^{p}}.
\end{equation}
\end{enumerate}
\end{prop}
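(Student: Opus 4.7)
The plan is to apply the chain estimate of Lemma~\ref{lem:chain estimate} to an arbitrary admissible test function for $\cp(B_r,B_R)$ and then exploit the interior hypothesis on $p$ to reduce the resulting telescoping sum to a geometric series. Let $u\in N^{1,p}_0(B_R)$ be admissible for $\cp(B_r,B_R)$; by truncating to $\min\{\max\{u,0\},1\}$ we may assume $0\le u\le 1$ with $u\equiv 1$ on $B_r$, so that $u_{B_r}=1$ while the minimal \p-weak upper gradient does not increase. With $r_k=2^kr$, $B^k=B(x,r_k)$ and $k_0$ as in Lemma~\ref{lem:chain estimate}, the chain estimate gives
\[
1 \;=\; u_{B_r} \;\simle\; \sum_{k=1}^{k_0+1} r_k \biggl(\vint_{\lambda B^k} g_u^p\,d\mu\biggr)^{1/p}.
\]
Setting $J:=\int_X g_u^p\,d\mu$, the trivial bound $\int_{\lambda B^k}g_u^p\,d\mu\le J$ together with doubling at $x$ (which gives $\mu(\lambda B^k)\simeq\mu(B^k)$) yields $1\simle J^{1/p}S$, where $S:=\sum_{k=1}^{k_0+1} r_k\mu(B^k)^{-1/p}$. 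It thus suffices to show $S\simle r/\mu(B_r)^{1/p}$ in part~\ref{it-1-interior} and $S\simle R/\mu(B_R)^{1/p}$ in part~\ref{it-2-interior}, for then taking the infimum over admissible $u$ will give \eqref{eq-it-1} and \eqref{eq-it-2}, respectively.

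For \ref{it-1-interior}, choose $q\in\lQ$ with $q>p$, which is possible since $p\in\interior\lQ$. The $\lQ$-condition gives $\mu(B_r)/\mu(B^k)\simle 2^{-(k-1)q}$, hence
\[
\frac{r_k}{\mu(B^k)^{1/p}} \;\simle\; \frac{r}{\mu(B_r)^{1/p}}\,2^{k(1-q/p)}.
\]
Since $1-q/p<0$, summing over $k\ge 1$ gives a convergent geometric series bounded by a constant independent of $k_0$, and thus $S\simle r/\mu(B_r)^{1/p}$ as desired.

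For \ref{it-2-interior}, pick $q\in\uQ$ with $q<p$. Now the $\uQ$-condition gives $\mu(B^k)\simge (r_k/R)^q\mu(B_R)$, whence
\[
\frac{r_k}{\mu(B^k)^{1/p}} \;\simle\; \frac{R^{q/p}}{\mu(B_R)^{1/p}}\,r_k^{1-q/p}.
\]
Since $1-q/p>0$, the factors $r_k^{1-q/p}=r^{1-q/p}\,2^{k(1-q/p)}$ increase geometrically, so the sum is dominated by its last term $r_{k_0+1}^{1-q/p}\simeq R^{1-q/p}$, yielding $S\simle R/\mu(B_R)^{1/p}$. The only real obstacle in this argument is the need for a \emph{strict} inequality between $p$ and the auxiliary exponent $q$; this is precisely what converts the chain sum into a controllable geometric series and is why the hypothesis must be the interior of the relevant exponent set. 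In the borderline cases $p=\max\lQ$ and $p=\min\uQ$ the geometric series degenerates into a sum of $k_0+1\simeq\log(R/r)$ comparable terms, explaining the extra factor $\log(R/r)$ appearing in Theorem~\ref{thm:borderline-intro}.
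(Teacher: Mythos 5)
Your proof is correct and follows essentially the same route as the paper: apply Lemma~\ref{lem:chain estimate}, pull out $\int g_u^p\,d\mu$, use the $Q$-set hypothesis to convert $r_k/\mu(B^k)^{1/p}$ into a geometric factor $2^{k(1-q/p)}$, and sum the geometric series. (Two tiny remarks: the truncation of $u$ and the appeal to doubling for $\mu(\lambda B^k)\simeq\mu(B^k)$ are both harmless but unnecessary — admissibility already gives $u_{B_r}\ge 1$, and the trivial inclusion $\mu(\lambda B^k)\ge\mu(B^k)$ suffices.)
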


With this we can now prove Theorem~\ref{thm:intro},
which also shows that the estimates in Proposition~\ref{prop:low bounds}
are sharp.

\begin{proof}[Proof of Theorem~\ref{thm:intro}]
Combining Propositions~\ref{prop:simple upper bound} and~\ref{prop:low bounds}
and appealing to Remark~\ref{rmk-Q-vs-Q0}
yield \ref{it-intro-1} and \ref{it-intro-2}.
The last part follows from Proposition~\ref{prop-sharp} below.
\end{proof}

The comparison constants in~\eqref{eq-it-1} and~\eqref{eq-it-2} depend on $p$. In particular,
the constants in our proof tend to zero as $p\nearrow\sup\lQ$ in~\ref{it-1-interior} and
as $p\searrow\inf\uQ$ in~\ref{it-2-interior}. 
This is quite natural, since already unweighted $\R^n$ shows
that these estimates do not always hold when $p=\max \lQ$ and $p=\min \uQ$, 
respectively.
In fact, if $X$ is Ahlfors \p-regular, and thus 
$\lQ=\lSo=\lSi=(0,p]$ and $\uQ=\uSo=\uSi=[p,\infty)$,
Proposition~\ref{prop:upper with S}\,\ref{it-3-upS} 
shows that \eqref{eq-it-1} and \eqref{eq-it-2} fail.
Moreover, Proposition~\ref{prop-sharp}
shows that the estimates 
in Proposition~\ref{prop:low bounds} can never hold 
for all $r$ and $R$  when $p$ is outside of the $Q$-sets.

\begin{proof}[Proof of Proposition~\ref{prop:low bounds}]
Let $u$ be admissible for $\cp(B_r,B_{R})$,
and let $B^k$ be a chain of balls, with radii $r_k$,
 as in Lemma~\ref{lem:chain estimate}.
From Lemma~\ref{lem:chain estimate}
we obtain, for any $0<q <\infty$, that
\begin{align}\label{eq:common}
1 & \simle \sum_{k=1}^{k_0+1} r_k \biggl(\vint_{\lambda B^k} g_u^p \,d\mu\biggr)^{1/p} 
    \le \sum_{k=1}^{k_0+1} \frac {r_k}{\mu(B^k)^{1/p}}
       \biggl(\int_{\lambda B^k} g_u^p \,d\mu\biggr)^{1/p} \nonumber \\
  & \le \biggl(\int_{B_{R}} g_u^p \,d\mu\biggr)^{1/p} 
       \sum_{k=1}^{k_0+1} {\biggl( \frac{r_k^{q}}{\mu(B^k)} \biggr)^{1/p}} r_k^{1-q/p}.
\end{align}
In \ref{it-1-interior} we choose $q>p$ such that $q\in \lQ$, and so we
have for all $1\le k \le k_0+1$ that
\begin{equation}  \label{eq-est-Bk-Br}
\frac{r_k^q}{\mu(B^k)} \simle \frac{r^q}{\mu(B_r)}.
\end{equation}
Since $1-q/p<0$, the sum in the last line of
\eqref{eq:common}
can thus be estimated as 
\begin{align*}
 \sum_{k=1}^{k_0+1} {\biggl( \frac{r_k^{q}}{\mu(B^k)} \biggr)^{1/p}} r_k^{1-q/p}
  & \simle \biggl( \frac{r^{q}}{\mu(B_r)} \biggr)^{1/p} \sum_{k=1}^{k_0+1} r_k^{1-q/p}\\
   & \simle \biggl( \frac{r^{q}}{\mu(B_r)} \biggr)^{1/p} r^{1-q/p}
   = \biggl(\frac{r^p}{\mu(B_r)}\biggr)^{1/p},
\end{align*}
giving 
\[
 \int_{B_{R}} g_u^p \,d\mu \simge \frac{\mu(B_r)}{r^{p}}.
\]
Taking infimum over all admissible $u$
finishes the proof of part \ref{it-1-interior}.

In \ref{it-2-interior} we instead choose $q\in \uQ$ such that $q<p$, 
and so we have for all $1\le k \le k_0+1$ that
\begin{equation}  \label{eq-est-Bk-BR}
\frac{r_k^q}{\mu(B^k)} \simle \frac{R^q}{\mu(B_R)}.
\end{equation}
Now $1-q/p>0$, and thus the sum in the last line of \eqref{eq:common}
can be estimated as 
\begin{align*}
 \sum_{k=1}^{k_0+1} {\biggl( \frac{r_k^{q}}{\mu(B^k)}\biggr)^{1/p}} r_k^{1-q/p}
  & \simle \biggl( \frac{R^{q}}{\mu(B_R)} \biggr)^{1/p} \sum_{k=1}^{k_0+1} r_k^{1-q/p}\\
  & \simle \biggl( \frac{R^{q}}{\mu(B_R)} \biggr)^{1/p} R^{1-q/p}
    = \biggl(\frac{R^p}{\mu(B_R)}\biggr)^{1/p},
\end{align*}
giving 
\[
 \int_{B_{R}} g_u^p \,d\mu \gtrsim \frac{\mu(B_R)}{R^{p}},
\]
and the claim follows by taking infimum over all admissible $u$.
\end{proof}

A modification of the above proof gives the following result, 
which is interesting mainly in the case 
when $p$ is in between the $Q$-sets, i.e.\ $p\notin\lQ\cup\uQ$.

\begin{prop} \label{prop:low bounds beyond borderline}  
Assume that $\mu$ is reverse-doubling at $x$ and
supports  a \p-Poincar\'e inequality at $x$.
Let\/ 
$0<r<R \le \diam X/2\tau$.
\begin{enumerate}
\item \label{it-1-beyond} 
 If\/ $0 < q < p$ and $q\in\lQ$, then
  \begin{equation}\label{eq:low bounds beyond borderline} 
  \cp(B_r,B_R)\simge \frac{\mu(B_r)}{r^{q}}R^{q-p}
    = \frac{\mu(B_r)}{r^{p}}   \Bigl(\frac r R\Bigr)^{p-q}.
 \end{equation}
\item \label{it-2-beyond} 
 If $q > p$ and $q\in\uQ$, then
  \begin{equation}\label{eq:up bounds beyond borderline} 
  \cp(B_r,B_R) \simge \frac{\mu(B_R)}{R^{q}} r^{q-p}  
    = \frac{\mu(B_R)}{R^{p}} \Bigl(\frac r R\Bigr)^{q-p}.
 \end{equation}
\end{enumerate}
\end{prop}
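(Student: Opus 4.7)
The plan is to mirror the proof of Proposition~\ref{prop:low bounds} almost verbatim, starting from the chain estimate \eqref{eq:common},
\[
1 \simle \biggl(\int_{B_R} g_u^p\,d\mu\biggr)^{1/p} \sum_{k=1}^{k_0+1} \biggl(\frac{r_k^q}{\mu(B^k)}\biggr)^{1/p} r_k^{1-q/p},
\]
valid for any $u\in\Np_0(B_R)$ admissible for $\cp(B_r,B_R)$ and any $q>0$. What changes compared to Proposition~\ref{prop:low bounds} is only the sign of the exponent $1-q/p$ in the geometric-type sum $\sum r_k^{1-q/p}$, and correspondingly which end of the chain of balls dominates.

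For part~\ref{it-1-beyond}, I would use the defining property of $\lQ$ to retain the bound \eqref{eq-est-Bk-Br}, namely $r_k^q/\mu(B^k) \simle r^q/\mu(B_r)$ for all $1\le k \le k_0+1$, since this only uses $q\in\lQ$ together with $r_k\ge r$. The novelty is that now $q<p$, so $1-q/p>0$ and the geometric sum is dominated by its \emph{largest} term:
\[
\sum_{k=1}^{k_0+1} r_k^{1-q/p} \simeq r_{k_0+1}^{1-q/p} \simeq R^{1-q/p}.
\]
Substituting into the chain estimate yields $\int_{B_R} g_u^p\,d\mu \simge \mu(B_r)r^{-q}R^{q-p}$, and taking infimum over all admissible $u$ gives \eqref{eq:low bounds beyond borderline}.

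Part~\ref{it-2-beyond} is symmetric. I would now use the definition of $\uQ$ to keep \eqref{eq-est-Bk-BR}, i.e.\ $r_k^q/\mu(B^k) \simle R^q/\mu(B_R)$, and observe that $q>p$ makes $1-q/p<0$, so the sum is dominated by its \emph{smallest} term, $r_1^{1-q/p}\simeq r^{1-q/p}$. This produces the lower bound $\int_{B_R} g_u^p\,d\mu \simge \mu(B_R)R^{-q}r^{q-p}$, whence \eqref{eq:up bounds beyond borderline} follows by taking infimum.

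There is no real obstacle here; the argument is essentially a sign-flip bookkeeping exercise on top of the proof of Proposition~\ref{prop:low bounds}, reusing the two one-sided ratio estimates \eqref{eq-est-Bk-Br} and~\eqref{eq-est-Bk-BR} without modification. The only point worth flagging is that the implicit constants necessarily degenerate as $q\nearrow p$ in~\ref{it-1-beyond} and as $q\searrow p$ in~\ref{it-2-beyond}, reflecting the fact that the logarithmic borderline estimates of Theorem~\ref{thm:borderline-intro} take over precisely at $q=p$, with Example~\ref{ex1-partb} confirming that no better power-type bound can be expected for $p$ strictly between the $Q$-sets.
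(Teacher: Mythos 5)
Your proof is correct and follows exactly the same route as the paper's own proof: both start from the chain estimate \eqref{eq:common}, plug in the one-sided ratio bounds \eqref{eq-est-Bk-Br} and \eqref{eq-est-Bk-BR}, and then observe that the sign of $1-q/p$ determines which end of the geometric sum $\sum_{k=1}^{k_0+1} r_k^{1-q/p}$ dominates. The closing remark about constants degenerating as $q\to p$ is a useful observation but not part of what needs to be proved.
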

\medskip
% \medskip needed since enumerate+prop ends with a display.

Proposition~\ref{prop-sharp} and
Example~\ref{ex1-partb} show that this result is sharp,
while unweighted $\R^n$, with $p=n$, 
shows that we cannot allow for $q=p$ in general. 
Also note that if 
$q\in\interior\lQ$ (resp.\ $q\in\interior\uQ$),
then \eqref{eq:low bounds beyond borderline} (resp.\ 
\eqref{eq:up bounds beyond borderline}) can be written as
$\cp(B_r,B_R) \simge \cq(B_r,B_R) R^{q-p}$
(resp.\ $\cp(B_r,B_R) \simge \cq(B_r,B_R) r^{q-p}$).

\begin{proof}
Let $u$ be admissible for $\cp(B_r,B_R)$,
and let $B^k$ be the corresponding balls, with radii $r_k$, 
from Lemma~\ref{lem:chain estimate}.
In \ref{it-1-p=1} we proceed as in~\eqref{eq:common} and use~\eqref{eq-est-Bk-Br} 
to obtain
\[ 
1 \simle \biggl({\frac{r^{q}}{\mu(B_r)}\int_{B_{R}} g_u^p \,d\mu}\biggr)^{1/p} 
       \sum_{k=1}^{k_0+1} r_k^{1-q/p} 
\simle R^{1-q/p} \biggl({\frac{r^{q}}{\mu(B_r)}\int_{B_{R}} g_u^p \,d\mu}
   \biggr)^{1/p},
\] 
since the exponent in the geometric series is $1-q/p>0$. 
Taking infimum over all admissible $u$ yields 
\eqref{eq:low bounds beyond borderline}. 

In \ref{it-2-p=1} we instead use~\eqref{eq:common} and~\eqref{eq-est-Bk-BR} 
and that the geometric series is $\simle r^{1-q/p}$ in this case.
\end{proof}

For the borderline cases $p=\max\lQ$ or $p=\min\uQ$, 
\eqref{eq:low bounds beyond borderline}
or~\eqref{eq:up bounds beyond borderline} can be used with $q$ arbitrarily close to $p$,
but the following proposition gives better estimates involving
logarithmic terms. 
If $X$ supports a $p_0$-Poincar\'e inequality at $x$ for some $1\le p_0<p$, then
even better 
estimates in the borderline cases are obtained in 
Proposition~\ref{prop:low bounds borderline-new}.
Nevertheless, the estimates  
in Proposition~\ref{prop-p=1} 
are of particular interest when $p=1$, since 
the $1$-Poincar\'e inequality is the best possible.

\begin{prop} \label{prop-p=1}
Assume that $\mu$ is reverse-doubling at $x$ and
supports  a \p-Poincar\'e inequality at $x$.
Let\/ $0<2r\le R \le \diam X/2\tau$.
\begin{enumerate}
\item \label{it-1-p=1}
If $p\in \lQ$, then
  \begin{equation}\label{eq:low bounds p=1}
  \cp(B_r,B_R)\simge 
    \frac{\mu(B_r)}{r^{p}}        \biggl(\log\frac R r\biggr)^{-p}.
 \end{equation}
\item \label{it-2-p=1}
If $p\in \uQ$, then
  \begin{equation}\label{eq:up bounds p=1}
  \cp(B_r,B_R) \simge 
    \frac{\mu(B_R)}{R^{p}}  \biggl(\log\frac R r\biggr)^{-p}.
 \end{equation}
\end{enumerate}
\end{prop}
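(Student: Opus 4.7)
The plan is to argue along the same lines as the proofs of Propositions~\ref{prop:low bounds} and~\ref{prop:low bounds beyond borderline}, but to take the borderline choice $q=p$ in the chain estimate. Let $u\in N_0^{1,p}(B_R)$ be admissible for $\cp(B_r,B_R)$, and let $B^k=B(x,r_k)$ with $r_k=2^k r$ and $k_0$ be as in Lemma~\ref{lem:chain estimate}. Since $u_{B_r}\ge 1$ (say, up to a q.e.\ adjustment), applying~\eqref{eq:chain estimte} and H\"older's inequality (or the elementary bound from the proof of Proposition~\ref{prop:low bounds}) yields the generic starting inequality
\[
1 \simle \biggl(\int_{B_R} g_u^p\,d\mu\biggr)^{1/p}\sum_{k=1}^{k_0+1}\biggl(\frac{r_k^{q}}{\mu(B^k)}\biggr)^{1/p} r_k^{1-q/p}
\]
valid for every $q>0$; see~\eqref{eq:common}.

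For part~\ref{it-1-p=1}, I would take $q=p\in\lQ$ so that the exponent $1-q/p$ is exactly zero and the geometric series collapses into a plain count of terms. Using the defining inequality for $p\in\lQ$, i.e.\ $r_k^p/\mu(B^k)\simle r^p/\mu(B_r)$ for $1\le k\le k_0+1$ (cf.~\eqref{eq-est-Bk-Br}), the sum is controlled by
\[
\sum_{k=1}^{k_0+1}\biggl(\frac{r_k^{p}}{\mu(B^k)}\biggr)^{1/p} \simle \biggl(\frac{r^{p}}{\mu(B_r)}\biggr)^{1/p}(k_0+1).
\]
Since $r_{k_0}\le R<r_{k_0+1}=2^{k_0+1}r$, we have $k_0+1\simeq \log(R/r)$ (this is where the assumption $2r\le R$ enters to ensure $k_0\ge 0$ and $\log(R/r)\ge\log 2>0$). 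Rearranging and taking the infimum over all admissible $u$ gives~\eqref{eq:low bounds p=1}.

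For part~\ref{it-2-p=1}, I would again take $q=p$, now using $p\in\uQ$ to obtain $r_k^p/\mu(B^k)\simle R^p/\mu(B_R)$ for $1\le k\le k_0+1$ (cf.~\eqref{eq-est-Bk-BR}); the same counting argument then yields
\[
\sum_{k=1}^{k_0+1}\biggl(\frac{r_k^{p}}{\mu(B^k)}\biggr)^{1/p} \simle \biggl(\frac{R^{p}}{\mu(B_R)}\biggr)^{1/p}\log\frac{R}{r},
\]
and taking the infimum over admissible $u$ gives~\eqref{eq:up bounds p=1}. There is no genuine obstacle here beyond observing that the borderline choice $q=p$ is exactly what turns the geometric sum (which was summable at either end in the earlier propositions) into the logarithmic factor $\log(R/r)$; the doubling and reverse-doubling at $x$ are used only through Lemma~\ref{lem:chain estimate} and through the single-sided bound on $r_k^p/\mu(B^k)$ coming from membership in $\lQ$ or $\uQ$.
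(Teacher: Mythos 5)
Your proposal is correct and follows essentially the same route as the paper's proof: starting from~\eqref{eq:common} with the borderline choice $q=p$ (so the geometric sum degenerates into a count of $k_0+1\simeq\log(R/r)$ terms), bounding the individual terms via~\eqref{eq-est-Bk-Br} when $p\in\lQ$ and via~\eqref{eq-est-Bk-BR} when $p\in\uQ$, and taking the infimum over admissible $u$. There is no substantive difference in method or in the use of the hypotheses.
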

\medskip
% \medskip needed since enumerate+prop ends with a display.

In unweighted $\R$ it is well known that $\cone(B_r,B_R)=2$
for all $0<r<R$.
In this case the right-hand sides in
\eqref{eq:low bounds p=1} and \eqref{eq:up bounds p=1}
both reduce to $2 (\log(R/r))^{-1}$, showing
that these estimates are not optimal 
in this particular case.

\begin{proof}
Let $u$ be admissible for $\cp(B_r,B_R)$,
and let $B^k$ be the corresponding balls, with radii $r_k$,
from Lemma~\ref{lem:chain estimate}.
Then~\eqref{eq:common} with $q=p$ and~\eqref{eq-est-Bk-Br} 
yield
\[
1 \simle k_0 \biggl(\int_{B_{R}} g_u^p \,d\mu\biggr)^{1/p} 
        \frac{r}{\mu(B_r)^{1/p}}.
\]
Since $k_0\simeq \log(R/r)$, taking infimum over all admissible $u$ yields 
\eqref{eq:low bounds p=1}.

In \ref{it-2-p=1} we instead use~\eqref{eq:common} and~\eqref{eq-est-Bk-BR}.
\end{proof}

\section{Capacity estimates for borderline exponents}
\label{sect:borderline}

When the borderline exponents are attained, then
Propositions~\ref{prop:simple upper bound} 
and~\ref{prop:upper bounds borderline} show that for $p=\max\lQ$,
\begin{equation} \label{eq-upper-1}
\cp(B_r,B_R) \le \min \biggl\{ \frac{\mu(B_{r})}{r^{p}}, 
    \frac{\mu(B_{R})}{R^{p}}\biggl(\log\frac R r\biggr)^{1-p} \biggr\},
\end{equation}
while for 
$p=\max\uQ$,
\begin{equation} \label{eq-upper-2}
\cp(B_r,B_R) \le \min \biggl\{ \frac{\mu(B_{R})}{R^{p}}, 
    \frac{\mu(B_{r})}{r^{p}}\biggl(\log\frac R r\biggr)^{1-p} \biggr\}.
\end{equation}
In this section we provide corresponding lower bounds,
even though the estimates do not exactly meet,
as seen in Theorem~\ref{thm:borderline-intro}.
Nevertheless, Proposition~\ref{prop-sharp} and 
Examples~\ref{ex1-partb}--\ref{ex-log-weights-large} below show that
all these estimates (including both possibilities
for the upper bounds in~\eqref{eq-upper-1} and in~\eqref{eq-upper-2})
are in some sense optimal;
see also Remark~\ref{rmk-log-weights}.

The following result holds for all $p\in \lQ$ (resp.\ $p\in \uQ$), 
but because of Proposition~\ref{prop:low bounds} it is most useful 
in the limiting case $p=\max \lQ$ (resp.\ $p=\min \uQ$).
It improves upon Proposition~\ref{prop-p=1}
at the cost of requiring a better Poincar\'e inequality; 
see the discussion on different Poincar\'e inequalities after 
Definition~\ref{def-PI}. 

\begin{prop}\label{prop:low bounds borderline-new}
Let\/ $1<p<\infty$ and\/ $0<2r\le R \le\diam X/2\tau$, and
assume that  $\mu$ is reverse-doubling at $x$ and
supports a $\po$-Poincar\'e inequality at $x$ for some\/ 
$1\leq \po <p$. 
\begin{enumerate}
\item \label{it-a-border}
 If $p\in\lQ$, then
\begin{equation}\label{eq-it-border-a}
\cp(B_r,B_{R})\simge \frac{\mu(B_r)}{r^{p}}\biggl(\log\frac R r\biggr)^{1-p}.
\end{equation}

\item \label{it-b-border}
If $p\in\uQ$, then
\begin{equation}\label{eq-it-border-b}
\cp(B_r,B_{R})\simge \frac{\mu(B_{R})}{R^{p}}\biggl(\log\frac R r\biggr)^{1-p}.
\end{equation}
\end{enumerate}
\end{prop}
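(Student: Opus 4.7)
The plan is to re-run the chain argument of Lemma~\ref{lem:chain estimate} using the stronger $\po$-Poincar\'e inequality at $x$, and then to combine H\"older's inequality on the telescoping sum with Jensen's inequality inside each ball. This is where the hypothesis $\po < p$ enters in an essential way, and it is what allows us to upgrade the logarithmic exponent from $-p$ (as in Proposition~\ref{prop-p=1}) to $1-p$.

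Fix an admissible $u\in N_0^{1,p}(B_R)$ for $\cp(B_r,B_R)$, so that $|u_{B_r}|\simeq 1$. Applying Lemma~\ref{lem:chain estimate} with $p$ replaced by $\po$ (which is legitimate since $u\in N_0^{1,\po}(B_R)$ and $g_u$ is also a $\po$-weak upper gradient) gives, with $r_k=2^kr$, $B^k=B(x,r_k)$ and $k_0$ chosen so that $r_{k_0}\le R<r_{k_0+1}$,
\[
 1\simle \sum_{k=1}^{k_0+1} r_k\biggl(\vint_{\lambda B^k} g_u^{\po}\,d\mu\biggr)^{1/\po}.
\]
Since $k_0+1\simeq\log(R/r)$, H\"older's inequality with exponents $p$ and $p/(p-1)$ applied to the sum yields
\[
 1\simle\Bigl(\log\tfrac{R}{r}\Bigr)^{1-1/p}\biggl(\sum_{k=1}^{k_0+1} r_k^p\Bigl(\vint_{\lambda B^k} g_u^{\po}\,d\mu\Bigr)^{p/\po}\biggr)^{1/p}.
\]
Because $p/\po>1$, Jensen's inequality applied to the convex function $t\mapsto t^{p/\po}$ gives $(\vint_{\lambda B^k} g_u^{\po})^{p/\po}\le \vint_{\lambda B^k} g_u^{p}$, so that after rearranging,
\[
 \Bigl(\log\tfrac{R}{r}\Bigr)^{p-1}\simle \sum_{k=1}^{k_0+1}\frac{r_k^p}{\mu(\lambda B^k)}\int_{\lambda B^k} g_u^p\,d\mu.
\]

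For part~\ref{it-a-border}, since $p\in\lQ$ we have $r_k^p/\mu(\lambda B^k)\simle r^p/\mu(B_r)$ uniformly in $1\le k\le k_0+1$ (using doubling to absorb the $\lambda$); for part~\ref{it-b-border}, since $p\in\uQ$ we get $r_k^p/\mu(\lambda B^k)\simle R^p/\mu(B_R)$ instead. Pulling this factor out of the sum and using that the geometrically growing balls $\lambda B^k$ have bounded overlap, the remaining sum $\sum_k\int_{\lambda B^k}g_u^p\,d\mu$ is controlled by $\int_{B_R}g_u^p\,d\mu$ (extending $u$ by zero outside $B_R$ so that $g_u$ vanishes there). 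Taking infimum over admissible $u$ yields \eqref{eq-it-border-a} and \eqref{eq-it-border-b}, respectively.

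The main obstacle will be the bounded overlap step: the dilated balls $\lambda B^k$ for $k$ near $k_0$ may exceed $B_R$, so one has to be slightly careful about which portion of $g_u$ is integrated and to verify that the sum can indeed be collapsed into a single integral over $B_R$ (or a fixed dilate) with a constant depending only on $\lambda$ and the doubling constant. The other point to watch is the legitimacy of invoking Lemma~\ref{lem:chain estimate} with $\po$ in place of $p$, which follows because the reverse-doubling and doubling assumptions at $x$ used there are unchanged, and because the $\po$-Poincar\'e inequality at $x$ is precisely the hypothesis we are making.
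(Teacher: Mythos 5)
Your proposal has a genuine gap: the ``bounded overlap'' step is false, and it is precisely the step that is supposed to produce the improvement from exponent $-p$ (Proposition~\ref{prop-p=1}) to $1-p$. The balls $\lambda B^k=B(x,\lambda 2^k r)$ are concentric and nested, not finitely overlapping: every point of $\lambda B^1$ lies in all $k_0+1$ of them. The only general bound for the collapsed sum is therefore
\[
\sum_{k=1}^{k_0+1}\int_{\lambda B^k} g_u^p\,d\mu \simle (k_0+1)\int_{B_R} g_u^p\,d\mu \simeq \biggl(\log\frac{R}{r}\biggr)\int_{B_R} g_u^p\,d\mu,
\]
not $\simle\int_{B_R}g_u^p\,d\mu$. (You also have a sign slip: after rearranging, the display should read $\bigl(\log(R/r)\bigr)^{1-p}\simle\sum\ldots$, not $\bigl(\log(R/r)\bigr)^{p-1}\simle\sum\ldots$.) Substituting the correct bound into your chain gives
\[
\biggl(\log\frac Rr\biggr)^{1-p} \simle \frac{\ro^p}{\mu(B_\ro)}\biggl(\log\frac Rr\biggr)\int_{B_R}g_u^p\,d\mu,
\quad\text{i.e.}\quad
\int_{B_R}g_u^p\,d\mu \simge \frac{\mu(B_\ro)}{\ro^p}\biggl(\log\frac Rr\biggr)^{-p},
\]
which is exactly the weaker bound of Proposition~\ref{prop-p=1} and not the claimed $(\log)^{1-p}$.

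The deeper issue is that applying Jensen inside each ball, although logically valid, discards the very advantage that $\po<p$ was supposed to provide: after $(\vint_{\lambda B^k}g_u^{\po})^{p/\po}\le\vint_{\lambda B^k}g_u^{p}$ you are effectively back to the $p$-Poincar\'e argument, which can only give $(\log)^{-p}$. The paper's proof avoids this by \emph{not} upgrading the exponent inside the balls. Instead it writes $\int_{\lambda B^k}g_u^{\po}=\sum_{j\le k}\int_{A_j}g_u^{\po}$ over the \emph{disjoint} annuli $A_j=\lambda B^j\setm\lambda B^{j-1}$, interchanges the order of summation, and then applies H\"older for integrals on each $A_j$ with exponents $p/\po$ and $p/(p-\po)$. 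This produces a factor $\mu(A_j)^{1-\po/p}$ that, via the $\lQ$-estimate, is matched against the tail sum $\sum_{k\ge j}r_k^{\po}/\mu(B^k)$; the disjointness of the $A_j$ then lets the final $j$-sum collapse to a single $\int_{B_R}g_u^p$ with no extra $k_0$-factor. That delicate balancing is where the strict inequality $\po<p$ enters essentially, and it cannot be replaced by a pointwise Jensen upgrade.

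Your opening moves (using Lemma~\ref{lem:chain estimate} with exponent $\po$ and a first H\"older on the telescoping sum) match the paper, but the proposal would need to be rewritten from the Jensen step onward along the lines above.
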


Examples~\ref{ex-log-weights} and
\ref{ex-log-weights-large} show that these estimates are 
sharp, while
Proposition~\ref{prop-sharp} and 
Examples~\ref{ex1-partb}--\ref{ex-log-weights-large}
show that they 
do not hold for $p$ outside of the $Q$-sets.
In particular, these lower bounds
do not in general hold
for $p = \sup \lQ \notin \lQ$ and $p=\inf \uQ \notin \uQ$, respectively.

\begin{proof}
Let $u$ be admissible for $\cp(B_r,B_R)$,
and let $B^k$ be the corresponding balls, with radii $r_k$,
from Lemma~\ref{lem:chain estimate}.
Also let $A_k = \lambda B^{k}\setminus \lambda B^{k-1}$.

Without loss of generality we may assume that $\po>1$.
Lemma~\ref{lem:chain estimate} (with exponent $\po$) and
H\"older's inequality for sums (with $\po$ and $\po/(\po-1)$)
yield
\begin{align}\label{eq:first holder}
1 &\simle \sum_{k=1}^{k_0+1} r_k \biggl(\vint_{\lambda B^k} g_u^{\po} \,d\mu\biggr)^{1/\po}
  \le \sum_{k=1}^{k_0+1} \biggl(\frac{r_k^{p_0}}{\mu(B^k)}  
                \int_{\la B^k} g_u^{\po} \,d\mu\biggr)^{1/{\po}}   \nonumber\\
  &\le \bigl(k_0+1\bigr)^{1-1/{\po}}
          \biggl(\sum_{k=1}^{k_0+1} \frac{r_k^{p_0}}{\mu(B^k)}  
          \sum_{j=1}^{k} \int_{A_j} g_u^{\po} \,d\mu\biggr)^{1/{\po}}.
\end{align}
Interchanging the order of summation, the
double sum in~\eqref{eq:first holder} can be estimated by 
H\"older's inequality for integrals (with exponents $p/{\po}$ and $p/(p-{\po})$) as
\begin{align}\label{eq:second holder} 
\sum_{k=1}^{k_0+1}  \frac{r_k^{p_0}}{\mu(B^k)}  
          \sum_{j=1}^{k} \int_{A_j} g_u^{\po} \,d\mu
   &= \sum_{j=1}^{k_0+1} \int_{A_j} g_u^{\po} \,d\mu \sum_{k=j}^{k_0+1} 
         \frac{r_k^{p_0}}{\mu(B^k)} \nonumber\\
  & \simle \sum_{j=1}^{k_0+1} \biggl(\int_{A_j} g_u^{p} \,d\mu\biggr)^{\po/p}
    \mu(A_j)^{1-\po/p} \sum_{k=j}^{k_0+1} \frac{r_k^{p_0}}{\mu(B^k)}.
\end{align}
Let us now take $q\in\lQ$. 
(In~\ref{it-a-border} we can use $q=p$, 
but recall that also in~\ref{it-b-border} we have $\lQ\neq\emptyset$
by the reverse-doubling.)
Then 
\begin{equation}\label{eq:annulus estimate}
\mu(A_j)\simle \mu(B^j)\simle \mu(B^k)\biggl(\frac{r_j}{r_k}\biggr)^q
\end{equation} 
for $1\le j \le k \le k_0+1$.
Moreover, let 
$\ro=r$ if $p\in\lQ$ (case~\ref{it-a-border}) and $\ro=R$ if $p\in\uQ$
(case~\ref{it-b-border}). Then we have for all $1\le k \le k_0+1$ that
\begin{equation} \label{eq-p-in-Q}
\frac{r_k^p}{\mu(B^k)}\simle \frac{\ro^p}{\mu(B_\ro)}.
\end{equation}
From~\eqref{eq:annulus estimate} and~\eqref{eq-p-in-Q}
we obtain 
\begin{align}\label{eq:here was the difference}
\mu(A_j)^{1-\po/p} \sum_{k=j}^{k_0+1} \frac{r_k^{p_0}}{\mu(B^k)}
 \simle \sum_{k=j}^{k_0+1} \biggl( \frac{r_k^p}{\mu(B^k)} \biggr)^{\po/p}
        \biggl( \frac{r_j}{r_k} \biggr)^{q(1-\po/p)}
   \simle \biggl( \frac{\ro^{p}}{\mu(B_\ro)} \biggr)^{\po/p},
\end{align}
since $1-\po/p>0$, and thus $\sum_{k=j}^{k_0+1} (r_j/r_k)^{q(1-\po/p)}\simeq1$.

Insertion of~\eqref{eq:here was the difference}
into~\eqref{eq:second holder}, and a use of H\"older's 
inequality for sums (with exponents $p/{\po}$ and $p/(p-{\po})$), yields
\begin{align}\label{eq:final holder}
\sum_{k=1}^{k_0+1} \frac{r_k^{p_0}}{\mu(B^k)}  
          \sum_{j=1}^{k} \int_{A_j} g_u^{\po} \,d\mu
&\simle \sum_{j=1}^{k_0+1} \biggl(\int_{A_j} g_u^{p} \,d\mu\biggr)^{\po/p}
  \biggl( \frac{\ro^{p}}{\mu(B_\ro)} \biggr)^{\po/p} \\
& \le \biggl( \frac{\ro^p}{\mu(B_\ro)} \biggr)^{\po/p} 
      \biggl(\sum_{j=1}^{k_0+1} \int_{A_j} g_u^{p} \,d\mu\biggr)^{\po/p}\bigl({k_0+1}\bigr)^{1-\po/p}.  \nonumber  
\end{align}
Since $0<2r\le R$, we have
$k_0+1\simeq k_0\simeq\log(R/r)$, 
and so we conclude 
from~\eqref{eq:first holder} and~\eqref{eq:final holder}
that 
\begin{align}\label{eq:conclude}
1 & \simle k_0^{1-1/\po} \biggl( \biggl(\frac{\ro^p}{\mu(B_\ro)} 
         \int_{B_R} g_u^{p} \,d\mu\biggr)^{\po/p} 
      k_0^{1-\po/p} \biggr)^{1/\po} \nonumber \\ 
  & \simle\biggl(\log\frac R r\biggr)^{1-1/p} \biggl(\frac{\ro^p}{\mu(B_\ro)}  
         \int_{B_R} g_u^{p} \,d\mu\biggr)^{1/p}.
\end{align}
The desired capacity estimates~\eqref{eq-it-border-a} and~\eqref{eq-it-border-b} 
now follow from~\eqref{eq:conclude} 
by taking the infima over all $u$ admissible 
for $\cp(B_r,B_R)$
and recalling that $\ro=r$ in the case~\ref{it-a-border} 
and $\ro=R$ in the case~\ref{it-b-border}.
\end{proof}

\begin{proof}[Proof of Theorem~\ref{thm:borderline-intro}]
Combining Propositions~\ref{prop:low bounds borderline-new} 
and~\ref{prop:upper bounds borderline},
and appealing to Remark~\ref{rmk-Q-vs-Q0}
yield \ref{it3-a-thm-intro} and \ref{it3-b-thm-intro}.
The last part follows from Proposition~\ref{prop-sharp} below.
\end{proof}

\section{Capacity estimates involving \texorpdfstring{$S$}{S}-sets}
\label{sect-S}

Let us first record the following upper bounds
related to the $S$-sets.
As before, these upper 
estimates do not require any Poincar\'e inequalities.
Recall from Section~\ref{sect-sets-of-exponents} that the inequalities
defining the $S_\infty$-sets are reversed from the ones in the $S_0$-sets,
so that $\lQi \subset \lSi$ and $\uQi \subset \uSi$.

\begin{prop}\label{prop:upper with S}
Fix\/ $0<R_0<\infty$. 
\begin{enumerate}
\item \label{it-1-upS} 
If\/ $0<q\in \lSo$, then for\/ $0<2r \le R \le R_0$,
\begin{equation}  \label{eq-S-set-est-upper}
\cp(B_r,B_{R})\simle \begin{cases}
    R^{q-p}, & \text{if } q<p,\\
    r^{q-p}, & \text{if } q>p. \end{cases}
\end{equation}

\item \label{it-2-upS}
If\/ $0<q \in\uSi$, then \eqref{eq-S-set-est-upper} holds 
for $R_0 \le r \le R/2<\infty$.

\item \label{it-3-upS} 
If $p \in \lSo$, then for\/ $0<2r \le R \le R_0$,
\begin{equation}\label{eq-it-3-upS}
\cp(B_r,B_{R})\simle \biggl(\log\frac R r\biggr)^{1-p}.
\end{equation}

\item \label{it-4-upS}
If $p \in\uSi$, then~\eqref{eq-it-3-upS} holds
for $R_0 \le r \le R/2<\infty$.
\end{enumerate}
\end{prop}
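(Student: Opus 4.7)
The plan is to deduce all four parts directly from the capacity upper bounds already established in Propositions~\ref{prop:simple upper bound} and~\ref{prop:upper bounds borderline}, by substituting the pointwise mass bounds supplied by the $S$-set conditions. This is natural because both earlier propositions rest on explicit admissible test functions (cutoffs of linear and logarithmic type) whose associated bounds reduce to expressions of the form $\mu(B_r)/r^p$, $\mu(B_R)/R^p$, or a dyadic sum $\sum \mu(B^k)/r_k^p$; each of these is controlled as soon as we know how $\mu(B_\rho)$ compares to a power of $\rho$ on the relevant range of radii.

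For parts \ref{it-1-upS} and \ref{it-2-upS}, I would start from the bound
\[
\cp(B_r,B_R)\simle \min\biggl\{\frac{\mu(B_r)}{r^p},\frac{\mu(B_R)}{R^p}\biggr\}
\]
of Proposition~\ref{prop:simple upper bound}. If $q<p$ I would pick the $\mu(B_R)/R^p$ term; for $q\in\lSo$, Lemma~\ref{lem-3} (extending the defining bound from $r\le1$ to $r\le 2R_0$) gives $\mu(B_R)\le C_q R^q$ and hence $\cp(B_r,B_R)\simle R^{q-p}$. If $q>p$ I would pick the $\mu(B_r)/r^p$ term and use the analogous bound $\mu(B_r)\le C_q r^q$. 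Part \ref{it-2-upS} is the same argument with the $\uSi$-set playing the role of $\lSo$: the obvious $\infty$-analogue of Lemma~\ref{lem-3} allows us to replace the threshold $r\ge 1$ in the definition of $\uSi$ by $r\ge R_0$, after which the mass bound $\mu(B_\rho)\le C_q\rho^q$ is available on the full range $R_0\le r<R$ and the same two sub-cases go through.

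For parts \ref{it-3-upS} and \ref{it-4-upS}, I would reuse the logarithmic cutoff test function from the proof of Proposition~\ref{prop:upper bounds borderline}. That proof produces the estimate
\[
\cp(B_r,B_R)\simle \frac{1}{\log^p(R/r)}\sum_{k=1}^{k_0+1}\frac{\mu(B^k)}{r_k^p},
\]
where $r_k=2^k r$, $B^k=B(x,r_k)$, and $k_0\simeq\log(R/r)$. In \ref{it-3-upS} the assumption $p\in\lSo$ together with Lemma~\ref{lem-3} (applied with some $R_0'\ge 2R_0$) yields $\mu(B^k)/r_k^p\simle 1$ uniformly in $k$, so the dyadic sum is $\simle k_0+1\simle\log(R/r)$, giving the desired $\log^{1-p}(R/r)$. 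In \ref{it-4-upS} the condition $p\in\uSi$, combined with the $\infty$-analogue of Lemma~\ref{lem-3}, yields the same bound $\mu(B^k)/r_k^p\simle 1$ along the chain, since now $r_k\ge r\ge R_0$; the rest of the estimate is identical.

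I do not foresee a genuine obstacle: the only delicate point is making sure the range of radii produced along the dyadic chain (respectively $r_k\le 2R\le 2R_0$ for the small-ball statements and $r_k\ge r\ge R_0$ for the large-ball statements) lies within the domain where the $S$-set bound applies, and this is exactly what the two forms of Lemma~\ref{lem-3} deliver after harmlessly enlarging the defining constants. No Poincar\'e inequality and no reverse-doubling is invoked, in agreement with the remark opening Section~\ref{sect-5}.
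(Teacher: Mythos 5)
Your proposal is correct and follows essentially the same route as the paper: parts \ref{it-1-upS}--\ref{it-2-upS} via Proposition~\ref{prop:simple upper bound} combined with the $S$-set mass bounds (extended to the range $\rho\le 2R_0$ resp.\ $\rho\ge R_0$ via Lemma~\ref{lem-3}), and parts \ref{it-3-upS}--\ref{it-4-upS} via the dyadic estimate~\eqref{eq:up border} from the proof of Proposition~\ref{prop:upper bounds borderline} together with $\mu(B^k)\simle r_k^p$ and $k_0+1\simle\log(R/r)$. The only difference is that you spell out the radius-range bookkeeping in more detail than the paper does.
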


In unweighted $\R^n$, the capacity $\cp(B_r,B_R)$
is comparable to the right-hand sides in the respective cases (with $q=n$), which
shows that these estimates are sharp.
See also the end of Example~\ref{ex1-partb}, where the sharpness of
part~\ref{it-1-upS} is shown in a case where $q\in\lSo\setminus\lQo$.

\begin{proof}
 The proofs of~\ref{it-1-upS} and~\ref{it-2-upS} follow immediately form
 Proposition~\ref{prop:simple upper bound} and the definitions of the $S$-sets.
 To see that~\ref{it-3-upS} and~\ref{it-4-upS} hold, one can proceed as in the
 proof of Proposition~\ref{prop:upper bounds borderline}
up to deducing \eqref{eq:up border}.
Then one uses the estimates
 $\mu(B^k)\simle r_k^p$ 
and $k_0+1 \simle \log(R/r)$ to
obtain \eqref{eq-it-3-upS}.
\end{proof}

The estimate \ref{it-3-upS} was already given by 
Heinonen~\cite{heinonen}, Lemma~7.18.
(The statement therein is slightly different, but the proof applies verbatim
to yield our estimate.)
It follows immediately from \ref{it-3-upS} that for $1<p\in\lSo$ the point $x$ has zero capacity,
but in fact the same is true even in the (possibly) larger set 
$[1,\infty)\setminus\uSo$, 
as the following proposition 
shows. Similarly, it follows from \ref{it-4-upS} 
that if $p \in\uSi$, then for a fixed $r>0$
we have $\cp(B_r,B_R)\to 0$ as $R\to\infty$, but again we obtain a better result in
Proposition~\ref{prop:S-infty}. 
Recall that 
\[
\inf \uSo  =\limsup_{r \to 0} \frac{\log \mu(B_r)}{\log r}
\quad \text{and} \quad
\inf \lSi  =\limsup_{r \to \infty} \frac{\log \mu(B_r)}{\log r}.
\]
by Lemma~\ref{lem-3} (and its $\infty$-version).

\begin{prop}\label{prop:zero cap}
If\/ $1\le p\notin \uSo$ or\/ $1 < p \in \lSo$,
then $\Cp(\{x\})=0=\cp(\{x\},B)$ for any ball
$B\ni x$.

Conversely, assume that
$\mu$ is reverse-doubling at $x$ and
supports a \p-Poincar\'e inequality at $x$, and that
there is a locally compact neigbourhood $G \ni x$.
If $p\in \interior \uSo$, then 
$\Cp(\{x\})>0$ and\/ $\cp(\{x\},B)>0$ for any ball $B\ni x$
with $\Cp(X \setm B)>0$.
\end{prop}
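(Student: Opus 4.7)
The plan is to treat the two implications of the proposition separately. The first (vanishing capacity) will follow by explicitly exhibiting test functions with vanishing Newtonian norm, while the converse requires outer regularity of the Sobolev capacity together with the chain estimate Lemma \ref{lem:chain estimate}.

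For the first implication under the hypothesis $1\le p\notin\uSo$, Lemma \ref{lem-3} gives a sequence $r_n\to 0$ with $\mu(B_{r_n})/r_n^p\to 0$ (from $\inf_r \mu(B_r)/r^p=0$ and continuity of $\mu$ ruling out that the infimum is approached away from $0$). Then the hat function $u_n(y)=(1-\dist(y,B_{r_n})/r_n)_\limplus$, supported in $B_{2r_n}$ with upper gradient $\le r_n^{-1}\chi_{B_{2r_n}}$, has $\|u_n\|_{\Np(X)}^p\simle \mu(B_{2r_n})+\mu(B_{2r_n})/r_n^p$ by doubling. Since $\mu(B_{2r_n})\simle r_n^p\cdot(\mu(B_{r_n})/r_n^p)\to 0$, both terms vanish, proving $\Cp(\{x\})=0$; the same $u_n$ is admissible for $\cp(\{x\},B)$ once $B_{2r_n}\subset B$, yielding $\cp(\{x\},B)=0$. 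For the alternative hypothesis $1<p\in\lSo$, I would instead use the logarithmic test function $v_{r,R}(y)=\min\{1,\log(R/d(y,x))/\log(R/r)\}_\limplus$ from the proof of Proposition \ref{prop:upper bounds borderline}, whose gradient energy is $\simle(\mu(B_R)/R^p)(\log(R/r))^{1-p}$. With $\mu(B_R)\simle R^p$ from $p\in\lSo$ and $1-p<0$, letting first $r\to 0$ sends the gradient term to $0$ (giving $\cp(\{x\},B)=0$), and then letting $R\to 0$ sends the $L^p$ mass $\le\mu(B_R)\simle R^p$ to $0$ as well, yielding $\Cp(\{x\})=0$.

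For the converse, by Proposition \ref{prop-Cp=0<=>cp=0} together with $\Cp(X\setm B)>0$ it suffices to establish $\Cp(\{x\})>0$. Pick $q<p$ with $q\in\uSo$, so that $\mu(B_r)\simge r^q$ for $r\le R_0$. Argue by contradiction: if $\Cp(\{x\})=0$, Proposition \ref{prop-zero-cap-capacitable} (using the locally compact neighborhood $G$) gives, for every $\eps>0$, an open $U\ni x$ and $u\in\Np(X)$ with $0\le u\le 1$, $u=1$ on $U$, and $\|u\|_{\Np(X)}^p<\eps$. Fix a radius $R\le\diam X/2\tau$ small enough that all structural bounds apply on scales $\le\la R$. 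Multiplying by a Lipschitz cutoff supported in $B_R$ and equal to $1$ on $B_{R/2}$ yields $v\in\Np_0(B_R)$ with $v=1$ on some $B_r\subset U\cap B_{R/2}$ and $\int g_v^p\simle C_R\eps$. Since $v_{B_r}=1$, Lemma \ref{lem:chain estimate} gives
\[
1\simle \sum_{k=1}^{k_0+1}r_k\Bigl(\vint_{\la B^k}g_v^p\,d\mu\Bigr)^{1/p},
\]
where $r_k=2^k r$ and $r_{k_0}\le R<r_{k_0+1}$. The bound $\mu(\la B^k)\simge r_k^q$ from $q\in\uSo$ dominates each summand by $r_k^{1-q/p}(\int_{B_{\la R}}g_v^p\,d\mu)^{1/p}$; since $1-q/p>0$, the geometric sum is comparable to its last term $\simeq R^{1-q/p}$, yielding $\int g_v^p\simge R^{p-q}$, a positive constant independent of $\eps$. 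This contradicts $\int g_v^p\simle C_R\eps$ for $\eps$ sufficiently small, proving $\Cp(\{x\})>0$.

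The main obstacle lies in the truncation step of the converse: one must verify that multiplication by the cutoff alters the $p$-weak upper gradient norm only by a factor depending on the fixed radius $R$ (not on $\eps$ or $r$), so the contradiction remains effective at the end. The reverse-doubling and \p-Poincar\'e hypotheses at $x$ enter precisely through Lemma \ref{lem:chain estimate}, while the locally compact neighborhood is needed only to invoke Proposition \ref{prop-zero-cap-capacitable}.
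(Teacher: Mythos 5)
Your proof is correct and follows essentially the same route as the paper: hat functions for $p\notin\uSo$, logarithmic test functions for $1<p\in\lSo$, and for the converse Propositions~\ref{prop-Cp=0<=>cp=0} and~\ref{prop-zero-cap-capacitable} combined with a lower capacity bound on small annuli. One small inaccuracy: in the case $1<p\in\lSo$ you assert that the gradient energy of $v_{r,R}$ is $\simle(\mu(B_R)/R^p)(\log(R/r))^{1-p}$, but that refined bound is the $\lQ$-bound of Proposition~\ref{prop:upper bounds borderline}\,\ref{it3-a}; from $p\in\lSo$ alone one only has $\mu(B^k)/r_k^p\simle 1$ in the dyadic sum and hence the weaker estimate $\simle(\log(R/r))^{1-p}$ of Proposition~\ref{prop:upper with S}\,\ref{it-3-upS}. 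Since you immediately discard the $\mu(B_R)/R^p$ factor anyway (using $\mu(B_R)\simle R^p$), the corrected bound yields the same conclusion, so the slip is harmless. In the converse you re-derive the lower bound for $\cp(B_r,B_R)$ from Lemma~\ref{lem:chain estimate} and a cutoff argument instead of simply citing Proposition~\ref{prop:S and capacity}, which is more verbose but equivalent; the truncation by the Lipschitz cutoff indeed only introduces a constant depending on $R$, as you note, so the contradiction survives.
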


The first part of Proposition~\ref{prop:zero cap} 
improves and clarifies the result of
Corollary~3.4 in Garofalo--Marola~\cite{GaMa}.
Note that this part
is valid without requiring any Poincar\'e inequality.
Unweighted $\R$ shows that the inequality in  $1 < p \in \lSo$
is necessary. 
The second part, on the other hand, is a consequence of 
Proposition~\ref{prop-Cp=0<=>cp=0} and the lower bound in
Proposition~\ref{prop:S and capacity}
 below.

In the remaining 
case when $p = \min \uSo$ and $p \notin \lSo$,
the $S$-sets are not enough to determine
if the capacities of $\{x\}$ are zero or not,
as we demonstrate at the end
of Example~\ref{ex-log-weights}.

\begin{prop}\label{prop:S and capacity}
Assume that $\mu$ is reverse-doubling at $x$ and supports 
a \p-Poincar\'e inequality at $x$,
and fix\/ $0<R_0<\infty$.
Furthermore, assume that\/ $0<q\in \uSo$ and\/ $0<r<R\le R_0$,
or that $q \in\lSi$ and $R_0 \le r<R<\infty$.
Then
\begin{equation*}
\cp(B_r,B_{R})\simge \begin{cases}
    R^{q-p}, & \text{if } q<p,\\
    r^{q-p}, & \text{if } q>p, \\
    {\displaystyle \biggl(\log\frac R r\biggr)^{-p}}, & \text{if } q=p.
\end{cases}
\end{equation*}
\end{prop}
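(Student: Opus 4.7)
The plan is to run the telescoping argument from Proposition \ref{prop:low bounds}, replacing the two-sided comparison coming from a $Q$-set by the weaker one-sided lower bound $\mu(B^k) \simge r_k^q$ provided by the $S$-set hypothesis. I would take $u \in \Np_0(B_R)$ admissible for $\cp(B_r,B_R)$ and, truncating to $[0,1]$ (which does not increase $g_u$), assume $0 \le u \le 1$ and $u_{B_r} = 1$. With $r_k = 2^k r$, $B^k = B(x,r_k)$ and $k_0$ from Lemma \ref{lem:chain estimate}, this lemma together with the trivial bound $\vint_{\lambda B^k} \le \mu(B^k)^{-1} \int_{\lambda B^k}$ yields
\[
1 = u_{B_r} \simle \sum_{k=1}^{k_0+1} \frac{r_k}{\mu(B^k)^{1/p}} \biggl( \int_{\lambda B^k} g_u^p\,d\mu \biggr)^{1/p} \le \biggl( \int_{B_R} g_u^p\,d\mu \biggr)^{1/p} \sum_{k=1}^{k_0+1} \frac{r_k}{\mu(B^k)^{1/p}}.
\]

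The next step is to invoke the hypothesis in the form of (the $R_0$-extension or $\infty$-analogue of) Lemma \ref{lem-3}: when $q \in \uSo$ and $0 < r < R \le R_0$, all the radii $r_k \in [2r, 2R_0]$ satisfy $\mu(B^k) \simge r_k^q$, and when $q \in \lSi$ with $R_0 \le r < R$, the radii $r_k \ge R_0$ satisfy the same bound. Either way, the tail sum above is bounded by $C \sum_{k=1}^{k_0+1} r_k^{1-q/p}$, a geometric series in $k$ with ratio $2^{1-q/p}$.

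The three cases now follow by elementary estimates on this geometric series. For $q > p$ the exponent $1-q/p$ is negative and the sum is dominated by its first term $\simeq r^{1-q/p}$; for $q < p$ the exponent is positive and the sum is dominated by its last term $r_{k_0+1}^{1-q/p} \simeq R^{1-q/p}$; for $q = p$ every summand equals $1$ and the sum is comparable to $k_0+1 \simeq \log(R/r)$ when $R \ge 2r$ (for $r < R < 2r$ the right-hand side $(\log(R/r))^{-p}$ is bounded and the bound is trivial). Raising to the $p$th power and taking the infimum over admissible $u$ then gives the three stated lower bounds. I do not anticipate a real obstacle: the only conceptual point is the recognition that the one-sided volume control provided by the $S$-sets suffices to close the telescoping chain, which also explains why the bounds here are necessarily weaker than the $Q$-set bounds in Propositions \ref{prop:low bounds} and \ref{prop:low bounds borderline-new}.
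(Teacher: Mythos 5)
Your main argument is the same as the paper's: apply Lemma~\ref{lem:chain estimate}, use the one-sided volume bound $\mu(B^k)\simge r_k^q$ coming from the $S$-set hypothesis (via the $R_0$- and $\infty$-versions of Lemma~\ref{lem-3}), and analyse the resulting geometric series according to the sign of $1-q/p$. The steps and the final bounds agree with the paper.

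The one point to correct is your parenthetical about the borderline case $q=p$ for thin annuli: you assert that ``for $r<R<2r$ the right-hand side $(\log(R/r))^{-p}$ is bounded and the bound is trivial.'' Neither part is true. As $r\nearrow R$ one has $\log(R/r)\to 0^+$, so $(\log(R/r))^{-p}\to\infty$; and the claimed inequality genuinely fails in this regime, since already in unweighted $\R^n$ with $p=q=n$ one has $\cp(B_r,B_R)\simeq(\log(R/r))^{1-p}$, whence $\cp(B_r,B_R)/(\log(R/r))^{-p}\simeq\log(R/r)\to 0$. The paper's own proof has the same issue (it writes $k_0+1\simeq\log(R/r)$, which holds only when $R\ge 2r$), and the proper reading of the statement is that the $q=p$ case carries the restriction $2r\le R$, exactly as do the paper's other logarithmic lower bounds (Propositions~\ref{prop-p=1}, \ref{prop:low bounds borderline-new} and~\ref{prop:S and capacity borderline}). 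Your instinct to isolate the thin-annulus case was sound; the proposed dismissal of it is not.
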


Also here unweighted $\R^n$ shows that the first two estimates are
sharp, and at the end of Example~\ref{ex1-partb} their sharpness
is shown in a case where $q\in\uSo\setminus\uQo$. 
Proposition~\ref{prop-sharp-S} provides a converse of 
Proposition~\ref{prop:S and capacity}.

\begin{proof}
Let $u$ be admissible for $\cp(B_r,B_R)$,
and let $B^k$ be the corresponding balls, with radii $r_k$,
from Lemma~\ref{lem:chain estimate}.
Since 
$\mu(\lambda B^k) \ge \mu(B^k)\simge r_k^q$ for all $k\in\N$,
we have by Lemma~\ref {lem:chain estimate} that
\begin{align} \label{eq:when in uSo-new}
1 & \simle \sum_{k=1}^{k_0+1} r_k \biggl( \vint_{\la B^k} g_u^p\,d\mu \biggr)^{1/p}
\nonumber\\
  &\simle \sum_{k=1}^{k_0+1} r_k^{1-q/p} \biggl( \int_{B_R} g_u^p\,d\mu \biggr)^{1/p} 
\simeq A \biggl( \int_{B_R} g_u^p\,d\mu \biggr)^{1/p},
\end{align}
where 
\[
    A= \begin{cases}
     R^{1-q/p}, & \text{if } q<p, \\
     r^{1-q/p}, & \text{if } q>p, \\
     k_0+1,   & \text{if } q=p.
      \end{cases}
\]
The claim then follows by taking infimum over all admissible $u$.
\end{proof}

\begin{proof}[Proof of Proposition~\ref{prop:zero cap}]
We may assume that $B=B_{R}$.
If $p\notin \uSo$, then there exist $r_n\to0$ such that 
$\mu(B(x,r_n))<r_n^p/n$.
For $r_n\le R$, 
let $u_n(y)=(1-d(x,y)/r_n)_\limplus$.
Then 
$u_n(x)=1$, $u_n=0$ outside $B(x,r_n)$, and $g_{u_n}\le1/r_n$.
Thus
\[ 
\cp(\{x\},B) \le \|g_{u_n}\|^p_{L^p(X)} \le \|u_n\|^p_{\Np(X)} 
\]
and 
\[
\Cp(\{x\}) \le \|u_n\|^p_{\Np(X)} 
\le (1+r_n^{-p}) \mu(B(x,r_n)) 
\simle \frac1n \to 0, \quad \text{as } n\to\infty.
\] 

In the case $1 < p \in \lSo$ the claim $\cp(\{x\},B)=0$
follows easily from
Proposition~\ref{prop:upper with S}\,\ref{it-3-upS}. 
To show that also $\Cp(\{x\})=0$, we let $\eps>0$.
Since $p \in \lSo$ we can find $r>0$ such that $\mu(B(x,r))< \eps$.
As $\cp(\{x\},B(x,r))=0$ 
(by Proposition~\ref{prop:upper with S}\,\ref{it-3-upS} again), we can also find
$u \in \Np_0(B(x,r))$ such that $u(x)=1$, $0 \le u \le 1$
and $\int_X g_u^p \, d\mu < \eps$.
It follows that 
\[
   \Cp(\{x\}) \le \|u\|_{\Np(X)}^p 
   \le \mu(B(x,r)) + \int_X g_u^p \, d\mu 
   < 2\eps \to 0,
   \quad \text{as } \eps \to 0.
\]

Conversely, assume 
that $p>q\in\uSo(x)$.
By Proposition~\ref{prop:S and capacity}
 we have for all $0<r<R=:R_0$ that
\begin{equation} \label{eq-cp=0-proof}
\cp(B_r,B_{R})\simge {R^{q-p}},
\end{equation}
with comparison constant independent of $r$.
If $\cp(\{x\},B)$ were $0$, then we would have $\Cp(\{x\})=0$,
by Proposition~\ref{prop-Cp=0<=>cp=0},
which in turn, by Proposition~\ref{prop-zero-cap-capacitable},
would contradict \eqref{eq-cp=0-proof}.
Hence $\cp(\{x\},B)>0$ and $\Cp(\{x\})>0$.
\end{proof}

\begin{remark}
It follows directly from Proposition~\ref{prop:S and capacity}
that if we \emph{a priori} know that the capacity is outer
or that the capacity of singletons can be tested
by only continuous functions,
then actually $\cp(\{x\},B_R) \simge R^{q-p}$ whenever $p>q\in\uSo$.
Both of the above assumptions hold e.g.\ if $X$ is complete,
$\mu$ is doubling and supporting a \p-Poincar\'e inequality, by
Theorem~6.19 in~\cite{BBbook} or
Kallunki--Shanmugalingam~\cite{KaSh},
see also Theorem~4.1 in Bj\"orn--Bj\"orn~\cite{BBvarcap}.
\end{remark}

Let us also record the following logarithmic 
lower bound, which improves the third lower bound in
Proposition~\ref{prop:S and capacity} and
is interesting in the borderline cases $p=\min\uSo$
and $p=\max\lSi$.

\begin{prop}\label{prop:S and capacity borderline}
Let\/ $1<p<\infty$ and
assume that 
$\mu$ is reverse-doubling at $x$ and
supports 
a $\po$-Poincar\'e inequality at $x$ for some\/ 
$1\leq \po <p$. 
Fix\/ $0<R_0<\infty$.
\begin{enumerate}
\item \label{it-3-S} 
If $p \in \uSo$ and\/ $0<2r \le R \le R_0$, then
\begin{equation}\label{eq-it-3-S}
\cp(B_r,B_{R})\simge \biggl(\log\frac R r\biggr)^{1-p}.
\end{equation}

\item \label{it-4-S}
If $p \in\lSi$ and $R_0 \le r \le R/2<\infty$, then~\eqref{eq-it-3-S} holds.
\end{enumerate}
\end{prop}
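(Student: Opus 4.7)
The plan is to adapt the proof of Proposition~\ref{prop:low bounds borderline-new} essentially verbatim, with one observation: in that argument, the hypothesis $p\in\lQ\cup\uQ$ was only used in~\eqref{eq:here was the difference} to bound $r_k^p/\mu(B^k)$ uniformly in $k$ by $\ro^p/\mu(B_\ro)$ with $\ro=r$ or $R$, and the weaker hypotheses $p\in\uSo$ (in part~\ref{it-3-S}) or $p\in\lSi$ (in part~\ref{it-4-S}) already yield the cruder bound $r_k^p/\mu(B^k)\simle 1$ throughout the relevant range of radii, which turns out to be enough for the logarithmic conclusion. The factor $\mu(B_\ro)/\ro^p$ that appears on the right-hand side in Proposition~\ref{prop:low bounds borderline-new} is then simply absent here.

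Concretely, fix $u$ admissible for $\cp(B_r,B_R)$ and take the dyadic chain $B^k=B(x,r_k)$, $r_k=2^k r$, from Lemma~\ref{lem:chain estimate}, with $r_{k_0}\le R<r_{k_0+1}$. We may assume $\po>1$. Lemma~\ref{lem:chain estimate} with the $\po$-Poincar\'e inequality and H\"older's inequality for sums (exponents $\po$ and $\po/(\po-1)$) reproduces~\eqref{eq:first holder}; interchanging the order of summation and applying H\"older's inequality for integrals (exponents $p/\po$ and $p/(p-\po)$) on each $A_j=\lambda B^j\setm\lambda B^{j-1}$ reproduces~\eqref{eq:second holder}, so everything reduces to estimating the factor
$$
\mu(A_j)^{1-\po/p}\sum_{k=j}^{k_0+1}\frac{r_k^{\po}}{\mu(B^k)}.
$$

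Reverse-doubling at $x$ gives some $q\in\lQo$ (in case~\ref{it-3-S}), respectively $q\in\lQi$ (in case~\ref{it-4-S}), so that $\mu(A_j)\simle\mu(B^j)\simle\mu(B^k)(r_j/r_k)^q$ for $j\le k$ in the relevant range. Combined with $\mu(B^k)\simge r_k^p$, valid for $r_k\le R_0$ in case~\ref{it-3-S} (from $p\in\uSo$) or for $r_k\ge R_0$ in case~\ref{it-4-S} (from $p\in\lSi$), this yields
$$
\mu(A_j)^{1-\po/p}\frac{r_k^{\po}}{\mu(B^k)}\simle\biggl(\frac{r_k^p}{\mu(B^k)}\biggr)^{\po/p}\biggl(\frac{r_j}{r_k}\biggr)^{q(1-\po/p)}\simle\biggl(\frac{r_j}{r_k}\biggr)^{q(1-\po/p)},
$$
and summing the convergent geometric series (the exponent is positive) gives $\mu(A_j)^{1-\po/p}\sum_{k=j}^{k_0+1}r_k^{\po}/\mu(B^k)\simle 1$.

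Substituting back and applying a final H\"older inequality for sums in $j$ (again with exponents $p/\po$ and $p/(p-\po)$) collects the integrals into $\int_{B_R}g_u^p\,d\mu$ at the cost of a factor $(k_0+1)^{1-\po/p}$. Combined with the $(k_0+1)^{1-1/\po}$ factor from the first H\"older step and the identity $(1-1/\po)+(1-\po/p)/\po=1-1/p$, this produces
$$
1\simle (k_0+1)^{1-1/p}\biggl(\int_{B_R}g_u^p\,d\mu\biggr)^{1/p}.
$$
Taking the infimum over admissible $u$ and using $k_0+1\simeq\log(R/r)$ yields~\eqref{eq-it-3-S}. I do not anticipate a real obstacle, since the computation is structurally identical to that of Proposition~\ref{prop:low bounds borderline-new}; the only minor point requiring attention is to verify that the reverse-doubling choice of $q$ and the $S$-set bound $\mu(B^k)\simge r_k^p$ are simultaneously valid across the full range of radii in each case (small radii in \ref{it-3-S}, large radii in \ref{it-4-S}), which is immediate from the definitions and the $\infty$-analogues of Lemmas~\ref{lem-3} and~\ref{lem-2}.
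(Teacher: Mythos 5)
Your proposal is correct and follows the paper's proof essentially verbatim: the paper also proceeds exactly as in Proposition~\ref{prop:low bounds borderline-new}, replacing the bound $r_k^p/\mu(B^k)\simle \ro^p/\mu(B_\ro)$ from~\eqref{eq-p-in-Q} with the cruder $r_k^p/\mu(B^k)\simle 1$ coming from $p\in\uSo$ (resp.\ $p\in\lSi$), so that~\eqref{eq:here was the difference} is bounded by a constant and the final estimate has no $\mu(B_\ro)/\ro^p$ factor. Your checks of the exponent arithmetic and of the availability of some $q\in\lQ_0$ (resp.\ $q\in\lQ_\infty$) from reverse-doubling are exactly the minor points the paper relies on implicitly.
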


\begin{proof}
We proceed as in the proof
of Proposition~\ref{prop:low bounds borderline-new}, but instead 
of~\eqref{eq-p-in-Q} we now have the simple estimate 
$r_k^p/\mu(B^k)\simle 1$ for all $1\le k \le k_0+1$
(both in \ref{it-3-S} and~\ref{it-4-S}). 
Thus the left-hand side 
of~\eqref{eq:here was the difference} is bounded by a 
constant. 
Inserting this into \eqref{eq:second holder} and
then \eqref{eq:first holder},
together with a use of H\"older's inequality for
sums as in \eqref{eq:final holder},
 yields
\[
1 \simle\biggl(\log\frac R r\biggr)^{1-1/p} \biggl(
         \int_{B_R} g_u^{p} \,d\mu\biggr)^{1/p},
\]
since $k_0+1 \simeq \log(R/r)$.
Taking infimum over all admissible $u$ yields~\ref{it-3-S} and~\ref{it-4-S}.
\end{proof}

In unbounded spaces we have the following counterpart to 
Proposition~\ref{prop:zero cap}.
Recall that the sets $\lSi$ and $\uSi$ are independent
of the reference point 
$x\in X$, by Lemma~\ref{lem:infty sets invariant}.

\begin{prop} \label{prop:S-infty}
Assume that $X$ is unbounded.
If\/ $1 \le p\notin \lSi$ or\/ $1 <p  \in \uSi$, then\/ $\cp(B_r,X)=0$ for all 
$r>0$, and thus\/ $\cp(E,X)=0$ for all bounded sets~$E$.

Conversely, assume that
$\mu$ is reverse-doubling at $x$ and supports 
a \p-Poincar\'e inequality at $x$. If
$p\in \interior \lSi$, then
\[  
\cp(B_r,B_R)\ge \cp(B_r,X) \ge c(r)>0
\quad \text{for all\/ } 0<r<R.
\]
\end{prop}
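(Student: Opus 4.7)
The plan is to treat the two implications separately.

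For the first part, suppose first that $p\notin\lSi$. Since $\mu(B_\rho)\ge\mu(B_1)>0$ for $\rho\ge 1$, the failure of $\mu(B_\rho)\ge C\rho^p$ to hold with a uniform constant forces a sequence $r_n\to\infty$ with $\mu(B_{r_n})/r_n^p\to 0$. For $n$ so large that $r_n>r$, the tent function
\[
u_n(y)=\max\biggl\{0,\min\biggl\{1,\frac{r_n-d(x,y)}{r_n-r}\biggr\}\biggr\}
\]
is admissible for $\cp(B_r,B_{r_n})$ and has a \p-weak upper gradient $g_{u_n}\le\chi_{B_{r_n}}/(r_n-r)$, whence
\[
\cp(B_r,B_{r_n})\le\int_X g_{u_n}^p\,d\mu\le\frac{\mu(B_{r_n})}{(r_n-r)^p}\longrightarrow 0
\quad\text{as } n\to\infty.
\]
In the remaining case $1<p\in\uSi$, Proposition~\ref{prop:upper with S}\,\ref{it-4-upS} (applied with $R_0=r$) yields $\cp(B_r,B_R)\simle(\log(R/r))^{1-p}\to 0$ as $R\to\infty$, since $p>1$. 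In both cases Lemma~\ref{lem-cpX} forces $\cp(B_r,X)=0$, and the last assertion follows because any bounded $E$ lies in some $B_r$, so $\cp(E,X)\le\cp(B_r,X)=0$ by monotonicity of the capacity in the second argument.

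For the converse, fix $p\in\interior\lSi$ and pick $q\in\lSi$ with $q>p$. For any $u\in\Np_0(B_R)$ admissible for $\cp(B_r,B_R)$ we have $|u_{B_r}|\ge 1$, so Lemma~\ref{lem:chain estimate} (applicable since $\diam X=\infty$) gives, with $r_k=2^k r$ and $B^k=B(x,r_k)$,
\[
1\simle\sum_{k=1}^{k_0+1}\frac{r_k}{\mu(B^k)^{1/p}}\biggl(\int_{\la B^k}g_u^p\,d\mu\biggr)^{1/p}
 \le\biggl(\int_{B_R}g_u^p\,d\mu\biggr)^{1/p}\sum_{k=1}^{k_0+1}\frac{r_k}{\mu(B^k)^{1/p}}.
\]
The key point is that the full series $S:=\sum_{k=1}^\infty r_k/\mu(B^k)^{1/p}$ converges to a value depending only on $r$ and the structural constants. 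Indeed, for indices with $r_k\ge 1$ the growth estimate $\mu(B^k)\ge C_q r_k^q$ gives $r_k/\mu(B^k)^{1/p}\simle r_k^{1-q/p}$; since $1-q/p<0$, this is a geometric series with ratio $2^{1-q/p}<1$. For the finitely many indices with $r_k<1$, the crude lower bound $\mu(B^k)\ge\mu(B_r)>0$ yields a finite contribution of order $\mu(B_r)^{-1/p}$.

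Taking the infimum over all admissible $u$ therefore yields $\cp(B_r,B_R)\ge c(r):=S^{-p}>0$ uniformly in $R$, and the desired inequality $\cp(B_r,X)\ge c(r)$ follows from Lemma~\ref{lem-cpX} together with the monotonicity $\cp(B_r,X)\le\cp(B_r,B_R)$. The main obstacle is precisely this splitting of the telescoping sum at the scale $r_k\sim 1$, since the $\lSi$-hypothesis only controls $\mu(B^k)$ for large radii; the standing assumption that $\mu$ is doubling at $x$ (together with $\mu(B_r)>0$) is what keeps the short-scale contribution $r$-dependent yet $R$-independent.
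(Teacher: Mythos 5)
Your proof is correct and follows essentially the same route as the paper: the same test functions for $p\notin\lSi$ (the paper delegates this to Proposition~\ref{prop:simple upper bound}), the identical appeal to Proposition~\ref{prop:upper with S}\,\ref{it-4-upS} for $1<p\in\uSi$, and for the converse the same chain estimate of Lemma~\ref{lem:chain estimate} combined with Lemma~\ref{lem-cpX} (the paper packages this through Proposition~\ref{prop:S and capacity} with $R_0:=r$, which absorbs the short-scale indices via the $\infty$-version of Lemma~\ref{lem-3} rather than by your crude bound $\mu(B^k)\ge\mu(B_r)$, but this is only a cosmetic difference). Aside from the slip ``second argument'' where you mean monotonicity in the set $E$, the argument is sound.
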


Unweighted $\R$ again shows that the inequality in  $1 < p \in \uSi$
is necessary.
In the remaining 
case when $p = \max \lSi$ and $p \notin \uSi$,
the $S$-sets are not enough to determine
if the capacities are zero or not,
see the end
of Example~\ref{ex-log-weights-large}.

\begin{proof}
If $p\notin \lSi$, then there exist $R_n\to\infty$ such that $\mu(B_{R_n})< R_n^p/n$.
By Proposition~\ref{prop:simple upper bound} we have
\[
\cp(B_r,X)\le\cp(B_r,B_{R_n})\simle \frac{\mu(B_{R_n})}{R_n^p} < \frac1n \to 0, 
  \quad \text{as } n\to\infty.
\]
If  $1 <p  \in \uSi$ we instead
use Proposition~\ref{prop:upper with S}\,\ref{it-4-upS}
to conclude that $\cp(B_r,X)=0$.

Conversely, if $p<q\in\lSi$, then let ${R_0}:=r<R$. 
From Proposition~\ref{prop:S and capacity}
 we
obtain that
\[
\cp(B_r,B_{R})\simge {r^{q-p}},
\]
and the claim follows from Lemma~\ref{lem-cpX}.
\end{proof}

\begin{remark}
Recall that an unbounded proper space $X$ is said to be \emph{\p-parabolic},
if $\cp(K,X)=0$ for all compact sets $K\subset X$, and otherwise
$X$ is \emph{\p-hyperbolic}. 
From Proposition~\ref{prop:S-infty} it thus follows
that the space $X$ is \emph{\p-parabolic} if
$1\le p\notin\lSi$
(or $1 <p  \in \uSi$), and $X$ is \emph{\p-hyperbolic} if
$p\in \interior \lSi$.
See e.g.\ Holopainen~\cite{Ho}, Holopainen--Koskela~\cite{HoKo}
and Holopainen--Shanmugalingam~\cite{HoSha} 
for more information on parabolic and hyperbolic
Riemannian manifolds and metric spaces.
\end{remark}

\section{Sharpness of the estimates}
\label{sect-sharpness}

The following result shows that the lower bounds in Sections~\ref{sect:interior} 
and~\ref{sect:borderline} are not only sharp, but also essentially equivalent to $p$ (or $q$)
belonging to the corresponding $Q$-sets.

\begin{prop} \label{prop-sharp}
If~\eqref{eq-it-1}, \eqref{eq-it-2}, 
\eqref{eq:low bounds p=1}, \eqref{eq:up bounds p=1}, 
\eqref{eq-it-border-a} 
or~\eqref{eq-it-border-b} holds for all\/ $0<2r\leq R$, then
$p\in\lQ$, $p\in\uQ$, 
$p\le\sup\lQ$, $p\ge\inf\uQ$,
$p\le\sup\lQ$ or $p\ge\inf\uQ$, respectively.

Similarly, if~\eqref{eq:low bounds beyond borderline} or \eqref{eq:up bounds beyond borderline}
holds for all\/ $0<2r\leq R$, then $q\in\lQ$ or $q\in\uQ$, respectively.  
\end{prop}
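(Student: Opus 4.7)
The plan is to derive each implication by confronting the hypothesized lower bound with the universal upper bound
\[
\cp(B_r,B_R) \simle \min\biggl\{\frac{\mu(B_r)}{r^p},\ \frac{\mu(B_R)}{R^p}\biggr\}
\]
supplied by Proposition~\ref{prop:simple upper bound}, which is available under our standing hypothesis that $\mu$ is doubling at $x$ and requires no Poincar\'e inequality. Each such confrontation pins down a one-sided comparison between $\mu(B_r)$ and $\mu(B_R)$ that Lemma~\ref{lem-2} translates into membership of $p$ (or $q$) in $\lQ$ or $\uQ$, or into a bound on $\sup\lQ$ or $\inf\uQ$.

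For \eqref{eq-it-1} I would cancel $\mu(B_r)/r^p$ against the matching term in the upper bound to obtain $\mu(B_r)/\mu(B_R)\simle (r/R)^p$ in the range $0<2r\le R$, and then absorb the range $1<R/r<2$ into the constant by using doubling at $x$; this gives $p\in\lQ$. The assertion for \eqref{eq-it-2} is the mirror image, yielding $\mu(B_r)/\mu(B_R)\simge (r/R)^p$ and hence $p\in\uQ$. The two remaining non-logarithmic statements are the cleanest: for \eqref{eq:low bounds beyond borderline} the factor $R^{q-p}$ cancels exactly against $R^{-p}$ in the upper bound, producing $\mu(B_r)/\mu(B_R)\simle (r/R)^q$ and thus $q\in\lQ$, and \eqref{eq:up bounds beyond borderline} is treated symmetrically, yielding $q\in\uQ$.

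The four logarithmic lower bounds are handled by the same principle, but the squeeze now leaves a logarithmic residue. From \eqref{eq-it-border-a} I would obtain $\mu(B_r)/\mu(B_R)\simle (r/R)^p(\log(R/r))^{p-1}$, and from \eqref{eq:low bounds p=1} a similar inequality with $(\log(R/r))^p$ in place of $(\log(R/r))^{p-1}$. For any $0<p'<p$ the auxiliary function $t\mapsto t^{-(p-p')}(\log t)^{p-1}$ (respectively $t^{-(p-p')}(\log t)^p$) is bounded on $[2,\infty)$, since the polynomial factor beats the logarithm at infinity and the function is finite at $t=2$; consequently the logarithm is absorbed and $\mu(B_r)/\mu(B_R)\simle (r/R)^{p'}$, so $p'\in\lQ$ and therefore $p\le\sup\lQ$. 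The companions \eqref{eq:up bounds p=1} and \eqref{eq-it-border-b} are treated in the mirror direction: the squeeze yields $\mu(B_r)/\mu(B_R)\simge (r/R)^p(\log(R/r))^{-(p-1)}$ or $(r/R)^p(\log(R/r))^{-p}$, and for any $p'>p$ the function $t\mapsto t^{p'-p}(\log t)^{-(p-1)}$ (respectively $t^{p'-p}(\log t)^{-p}$) is bounded below by a positive constant on $[2,\infty)$, whence $\mu(B_r)/\mu(B_R)\simge (r/R)^{p'}$ and thus $p'\in\uQ$ and $p\ge\inf\uQ$.

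The only non-automatic point throughout is the passage from the regime $0<2r\le R$ (where both the hypothesis and Proposition~\ref{prop:simple upper bound} apply) to the full range $0<r<R$ demanded by the definitions of the $Q$-sets, and this is dealt with uniformly by doubling at $x$. I therefore anticipate no genuine obstacle; the argument is essentially a careful bookkeeping of polynomial and logarithmic factors, combined with the universal upper bound of Proposition~\ref{prop:simple upper bound}.
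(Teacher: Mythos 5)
Your proposal is correct and follows essentially the same route as the paper: confront each lower bound with the universal upper bound from Proposition~\ref{prop:simple upper bound} to squeeze out a one-sided estimate for $\mu(B_r)/\mu(B_R)$, absorb the logarithmic residue via $\log^p(R/r)\simle(R/r)^\eps$ in the borderline cases, and fill in $R/2<r<R$ using doubling at $x$. The only cosmetic difference is that the paper notes \eqref{eq-it-border-a} implies \eqref{eq:low bounds p=1} (and similarly \eqref{eq-it-border-b} implies \eqref{eq:up bounds p=1}) and thus handles each pair in one stroke, whereas you treat the two members separately with the same technique.
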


\begin{proof}
We need to estimate $\mu(B_r)/\mu(B_R)$ in terms of $r/R$ for all $0<r<R$.
It is enough to do this for $0<2r\leq R$, since $R/2<r<R$ can be 
treated by the doubling property of $\mu$ at $x$.
If~\eqref{eq-it-1} or~\eqref{eq-it-2} holds, then 
Proposition~\ref{prop:simple upper bound} yields
\[
\frac{\mu(B_r)}{r^{p}} \simle \cp(B_r,B_R) \simle \frac{\mu(B_{R})}{R^{p}}
\quad \text{or} \quad
\frac{\mu(B_R)}{R^{p}} \simle \cp(B_r,B_R) \simle \frac{\mu(B_{r})}{r^{p}},
\]
which is equivalent to $p\in\lQ$ or $p\in\uQ$, respectively.

Next, if \eqref{eq:low bounds beyond borderline}
holds for some $q>0$, then using 
Proposition~\ref{prop:simple upper bound} we see that
\[
   \frac{\mu(B_r)}{r^{q}}R^{q-p} 
   \simle \cp(B_r,B_R)\simle  \frac{\mu(B_R)}{R^p},
\]
which after division by $R^{q-p}$ shows that $q \in \lQ$.
Similarly, if \eqref{eq:up bounds beyond borderline}
holds for some $q>0$, then $q \in \uQ$.

Finally, if~\eqref{eq:low bounds p=1} holds,
and in particular if~\eqref{eq-it-border-a} holds, then 
Proposition~\ref{prop:simple upper bound} yields for all $\eps>0$
that 
\[
\frac{\mu(B_r)}{\mu(B_R)} 
\simle \frac{r^p \cp(B_r,B_R)}{\mu(B_R)} 
{ \log^p \frac{R}{r}}   
\simle \Bigl( \frac{r}{R} \Bigr)^p 
{\log^p \frac{R}{r}}  
\simle \Bigl( \frac{r}{R} \Bigr)^{p-\eps},
\]
where the last implicit constant depends on $\eps$.
Thus $p-\eps\in\lQ$ for every $\eps>0$, showing that $p\le\sup\lQ$.
The implications 
\eqref{eq-it-border-b} $\imp$ \eqref{eq:up bounds p=1} $\imp$
$p\ge\inf\uQ$ are proved similarly.
\end{proof}

We have a corresponding result for the $S$-sets as well.

\begin{prop}\label{prop-sharp-S}
If for some $q>0$ and all\/ $0<2r\leq R \le R_0$,
\begin{equation}  \label{eq-lower-S-for-sharp}
\cp(B_r,B_R) \simge r^{q-p} \quad \text{or} \quad
\cp(B_r,B_R) \simge R^{q-p},
\end{equation}
then $q\in \uSo$. Similarly, if
\begin{equation}  \label{eq-lower-S-for-sharp-log}
\cp(B_r,B_R) \simge \biggl( \log \frac{R}{r} \biggr)^{-p}
\end{equation}
for all\/ $0<2r\leq R \le R_0$, then $p\ge\inf\uSo$.

If instead \eqref{eq-lower-S-for-sharp} or \eqref{eq-lower-S-for-sharp-log} 
holds for all $R_0\le r\le R/2<\infty$, then $q\in\lSi$ or $p\le\sup\lSi$,
respectively.
\end{prop}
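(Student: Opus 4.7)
The plan is to combine each hypothesized lower bound with the elementary upper bound
\[
\cp(B_r,B_R)\simle\min\biggl\{\frac{\mu(B_r)}{r^p},\frac{\mu(B_R)}{R^p}\biggr\}
\]
from Proposition~\ref{prop:simple upper bound}, which is available here since $\mu$ is assumed doubling at $x$. Extracting the resulting lower bounds on $\mu(B_r)$ or $\mu(B_R)$ and then invoking Lemma~\ref{lem-3} (or its $\infty$-version) will produce the claimed membership in $\uSo$ or $\lSi$.

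First I would dispatch the power-type hypotheses. If $\cp(B_r,B_R)\simge r^{q-p}$ for all $0<2r\le R\le R_0$, then pairing with $\cp(B_r,B_R)\simle\mu(B_r)/r^p$ immediately yields $\mu(B_r)\simge r^q$ for every $r\le R_0/2$, whence $q\in\uSo$ by Lemma~\ref{lem-3}. The case $\cp(B_r,B_R)\simge R^{q-p}$ is symmetric, combined instead with $\cp(B_r,B_R)\simle\mu(B_R)/R^p$ to give $\mu(B_R)\simge R^q$. The $\lSi$-statements (when $R_0\le r\le R/2<\infty$) follow by exactly the same reasoning, with Lemma~\ref{lem-3} replaced by its $\infty$-version.

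The logarithmic case needs one extra trick: specialize the hypothesis to $r=R/2$, which collapses the logarithmic factor into a harmless constant and yields $\cp(B_{R/2},B_R)\simge(\log 2)^{-p}\simeq 1$. Pairing with $\cp(B_{R/2},B_R)\simle\mu(B_R)/R^p$ then gives $\mu(B_R)\simge R^p$. In the first scenario this holds for every $R\le R_0$, and since $R\le 1$ for $R$ small enough we may upgrade the estimate to $\mu(B_R)\simge R^{p+\eps}$ for any fixed $\eps>0$, so that $p+\eps\in\uSo$ by Lemma~\ref{lem-3}; letting $\eps\to 0^+$ yields $\inf\uSo\le p$. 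The large-radii variant is analogous: $\mu(B_R)\simge R^p$ for $R\ge 2R_0$, and for $R\ge 1$ this upgrades to $\mu(B_R)\simge R^{p-\eps}$ for every $\eps>0$, so that $p-\eps\in\lSi$ by the $\infty$-version of Lemma~\ref{lem-3}, giving $p\le\sup\lSi$.

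I do not anticipate a serious obstacle; the argument is essentially a bookkeeping exercise built on the simple upper bound from Proposition~\ref{prop:simple upper bound}. The only mildly delicate point is the $\eps$-shift in the logarithmic case, which is forced by the fact that $p$ itself need not belong to $\uSo$ or $\lSi$ even when the corresponding log-lower-bound holds, precisely as illustrated by the borderline examples discussed earlier in the paper.
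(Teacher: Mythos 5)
Your proof is correct, and it rests on the same basic device as the paper's: pair the hypothesized lower bound with the elementary upper bound of Proposition~\ref{prop:simple upper bound} and then appeal to Lemma~\ref{lem-3} (or its $\infty$-version). For the power hypotheses you pair each disjunct directly with its matching upper bound, while the paper instead observes that when $q\le p$ both disjuncts imply $\cp\simge R^{q-p}$ (and symmetrically for $q\ge p$), and splits on the sign of $q-p$; both routes work, yours being slightly more economical. The genuine difference is in the logarithmic case: you specialize to $r=R/2$, which freezes $\log(R/r)$ to the constant $\log 2$ and yields $\mu(B_R)\simge R^p$ for all $R\le R_0$ outright, whereas the paper fixes $R=R_0$, is then left with the unbounded factor $\log(R_0/r)$, and needs the $\eps$-shift to dominate it, obtaining only $\mu(B_r)\simge r^{p(1+\eps)}$. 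Your route is cleaner and in fact proves the strictly stronger conclusion $p\in\uSo$ (resp.\ $p\in\lSi$), not merely $p\ge\inf\uSo$ (resp.\ $p\le\sup\lSi$). Consequently the $\eps$-shift you tack on afterwards is superfluous, and your closing assertion that it is ``forced by the fact that $p$ itself need not belong to $\uSo$ or $\lSi$ even when the corresponding log-lower-bound holds'' contradicts what you have just established: under \eqref{eq-lower-S-for-sharp-log} one does get $p\in\uSo$. The borderline examples in the paper show only that \eqref{eq-lower-S-for-sharp-log} can \emph{fail} when $p\notin\uSo$ (e.g.\ Example~\ref{ex-log-weights}\,(a) with $\be<0$, where $\cp(B_{R/2},B_R)\simeq\log^\be(1/R)\to 0$ as $R\to0$), not that it can hold with $p\notin\uSo$.
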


\begin{proof}
We prove only the case $0<2r\leq R \le R_0$, the other case being similar.

If \eqref{eq-lower-S-for-sharp} holds and $q\le p$, then 
Proposition~\ref{prop:simple upper bound} implies that 
$R^{q-p} \simle \cp(B_r,B_R) \simle R^{-p} \mu(B_R)$ for all $R\le R_0$, 
showing that $q\in\uSo$.
If
instead $q\ge p$ and \eqref{eq-lower-S-for-sharp} holds, then we get
$r^{q-p} \simle \cp(B_r,B_R) \simle r^{-p} \mu(B_r)$ for all $r\le R_0/2$,
and the same conclusion follows.

If \eqref{eq-lower-S-for-sharp-log} holds, then 
Proposition~\ref{prop:simple upper bound} and taking $R=R_0$ show that
$\log^{-p} (R_0/r) \simle \cp(B_r,B_R) \simle r^{-p} \mu(B_r)$.
Since $\log (R_0/r) \simle r^{-\eps}$ for every $\eps>0$, this yields
$\mu(B_r)\simge r^{p(1+\eps)}$, and hence $p(1+\eps)\in\uSo$.
Letting $\eps\to0$, gives $p\ge \inf\uSo$.
\end{proof}

In the rest of this section we continue
our study of the examples from Section~\ref{sect-ex}, 
using
a general formula for the capacity 
on weighted $\R^n$ with radial weights.
The proof of this formula is postponed until
Section~\ref{sect:weights}, see
Proposition~\ref{prop-cap-formula-radial}.

\begin{example} \label{ex1-partb}
We  
continue with our Example~\ref{ex1}.
First, for $p >2$ and $2\alp_{k+1}\le2r\le R\le \be_k$, we estimate using 
Proposition~\ref{prop-cap-formula-radial} with 
$f'(\rho)\simeq w(\rho)\rho$
and~\eqref{eq-ex1-4} that
\begin{align}  \label{eq-est-cap-al-be}
 \cp(B_r, B_R) 
    & \simeq \biggl( \int_{r}^{R} (\alp_{k+1} \rho)^{1/(1-p)}\,d\rho \biggr)^{1-p} \\
    & \simeq \alp_{k+1} ( R^{(p-2)/(p-1)} - r^{(p-2)/(p-1)})^{1-p} 
     \simeq \alp_{k+1} R^{2-p} \simeq \frac{\mu(B_R)}{R^p}, \nonumber
\end{align}
showing that the second upper bound in 
Proposition~\ref{prop:simple upper bound} cannot be improved.
With $r=\al_{k+1}$ and $R=\be_k$ it also follows that 
\begin{equation}\label{eq:cap-div-to-zero}
   \frac{ \cp(B_{\alp_{k+1}}, B_{\be_k})}{\alp_{k+1}^{-p} \mu(B_{\alp_{k+1}})}
    \simeq \biggl( \frac{\alp_{k+1}}{\be_k} \biggr)^{p-2} 
    = \alp_k^{p/2-1} 
    \to 0, 
    \quad \text{as } k \to \infty,
\end{equation}
since $p>2$.
This illustrates the fact (known from Proposition~\ref{prop-sharp}) 
that the lower estimates 
\eqref{eq-it-1}, \eqref{eq:low bounds p=1}
and \eqref{eq-it-border-a} do not
hold for $p >2$, i.e.\ for $p \notin \lQ$.
In addition, the equivalence in~\eqref{eq:cap-div-to-zero} shows 
that the lower bound in
\eqref{eq:low bounds beyond borderline}
is sharp (with $q=2\in\lQ$).
As
\[
    \biggl( \log \frac{\be_k}{\alp_{k+1}} \biggr)^{1-p} 
    = ( \log \alp_{k}^{-1/2} )^{1-p} 
    \to 0, 
    \quad \text{as } k \to \infty,
\]
we also conclude from~\eqref{eq-est-cap-al-be} (with $r=\al_{k+1}$ and $R=\be_k$)
that the estimate \eqref{eq-it3-a} does not
hold for $p >2$, i.e.\ for $p \notin \lQ$.

If $1 <p <4$ and $2\be_k\le2r\le R \le \al_{k}$, then by 
Proposition~\ref{prop-cap-formula-radial} with 
$f'(\rho)\simeq w(\rho)\rho$
and~\eqref{eq-Br-le-al},
\begin{align} \label{eq-est-cap-be-al}
 \cp(B_{r}, B_{R}) 
    & \simeq \biggl( \int_{r}^{R} \biggl(\frac{\rho^2}{\alp_{k}} 
        \rho\biggr)^{1/(1-p)}\,d\rho \biggr)^{1-p} \\
    & \simeq \frac{1}{\alp_k} ( R^{(p-4)/(p-1)} - r^{(p-4)/(p-1)})^{1-p} 
     \simeq \frac{r^{4-p}}{\alp_{k}} \simeq \frac{\mu(B_r)}{r^p}, \nonumber
\end{align}
showing that the first upper bound in 
Proposition~\ref{prop:simple upper bound} cannot be improved.
In particular, \eqref{eq-est-cap-al-be} and \eqref{eq-est-cap-be-al} show that
each of the upper bounds in Proposition~\ref{prop:simple upper bound}
can give a sharp estimate for certain radii even when
$p\notin\lQ\cup\uQ$.

With $r=\be_{k}$ and $R=\al_k$ it 
follows from~\eqref{eq-est-cap-be-al} that
\[
   \frac{ \cp(B_{\be_{k}}, B_{\alp_k})}{\alp_{k}^{-p} \mu(B_{\alp_{k}})}
    \simeq \biggl( \frac{\be_k}{\alp_k} \biggr)^{4-p}
    = \alp_k^{2-p/2} 
    \to 0, 
    \quad \text{as } k \to \infty,
\]
since $p<4$. 
Thus we here have a concrete case where 
the lower estimates \eqref{eq-it-2}, \eqref{eq:up bounds p=1}
and \eqref{eq-it-border-b} do not
hold for $p <4$, i.e.\ for $p \notin \uQ$, and we also see that 
\eqref{eq:up bounds beyond borderline}
is sharp as well
(with $q=4\in\uQ$).
Moreover, as 
\[
    \biggl( \log \frac{\al_k}{\be_{k}} \biggr)^{1-p} 
    = ( \log \alp_{k}^{-1/2} )^{1-p} 
    \to 0, 
    \quad \text{as } k \to \infty,
\]
we conclude from~\eqref{eq-est-cap-be-al} (with $r=\be_{k}$ and $R=\al_k$)
that the estimate \eqref{eq-it3-b} does not
hold for $1 <p<4$, i.e.\ for $p \notin \uQ$.

From \eqref{eq-est-cap-be-al} and~\eqref{eq-est-cap-al-be} with $p=2$ and $p=4$,
respectively, we see that
\[
 \ctwo(B_{\be_{k}}, B_{\alp_k}) 
\simeq \frac{\mu(B_{\be_k})}{\be_k^2} 
\quad \text{and} \quad
 \capp_4(B_{\al_{k+1}}, B_{\be_k}) \simeq \frac{\mu(B_{\be_k})}{\be_k^4},
\]
which shows that the lower bounds in~\eqref{eq-it-border-a} and~\eqref{eq-it-border-b}
are not always comparable to $\cp(B_r,B_R)$ when $p=\max \lQ$ or 
$p=\min \uQ$, and that the estimates provided by
Proposition~\ref{prop:simple upper bound} are in this case optimal 
(and better than those in Proposition~\ref{prop:upper bounds borderline}).

Finally, choosing $R=\be_k$ and $p>q=\tfrac{10}{3}=\min\uSo$ 
in \eqref{eq-est-cap-al-be}
(or $r=\be_k$ and $1 <p<q=\tfrac{10}{3}=\min\uSo$ in 
\eqref{eq-est-cap-be-al})
shows, together with \eqref{eq-ex1-3b}, that 
the first two lower bounds in Proposition~\ref{prop:S and capacity}
are sharp.
Similarly for $p<q=3=\max\lSo$, we see from \eqref{eq-ex1-3} and 
\eqref{eq-est-cap-be-al} with $r=\tfrac12\al_k$ 
and $R=\al_k$ that the upper
bounds in Proposition~\ref{prop:upper with S}\,\ref{it-1-upS} are sharp.
\end{example}

\begin{example}  \label{ex-log-weights}
This is a continuation of Example~\ref{ex-log-weight-Q} in $\R^n$, 
$n\ge2$, with the weight
\[
      w(\rho)=\begin{cases}
          \rho^{p-n} \log^\be (1/\rho), & \text{if } 0<\rho\le1/e, \\
          \rho^{p-n} , & \text{otherwise},
        \end{cases}
\]
where 
$\be\in\R$ is arbitrary and we this time require $p>1$.
Recall that for $0<r<1/e$ and $x=0$ we have $\mu(B_r)\simeq r^p \log^\be (1/r)$.
Proposition~\ref{prop-cap-formula-radial} with $f'(\rho)\simeq w(\rho)\rho^{n-1}$ gives, for 
$0<r<R<1/e$, that
\begin{align}  \label{eq-log-cap-first}
\cp(B_r,B_R)^{1/(1-p)}  
&\simeq \int_r^R \log^{\be/(1-p)} (1/\rho) \,\frac{d\rho}{\rho}
= \int_{\log(1/R)}^{\log(1/r)}  t^{\be/(1-p)} \,dt \nonumber\\
&= \frac{1}{\s} \Bigl( \log^\s \frac1r - \log^\s \frac1R \Bigr)
\end{align}
if $\s=1+\be/(1-p)\ne0$, and 
\[
\cp(B_r,B_R)^{1/(1-p)} \simeq \log \log \frac1r - \log \log \frac1R
\]
if $\be=p-1$. 

The estimate~\eqref{eq-log-cap-first} can be further simplified.
For that we recall the simple Lemma~3.1 from 
Bj\"orn--Bj\"orn--Gill--Shanmugalingam~\cite{tree} which says 
that for all
$\s>0$ and all $t\in[0,1]$,
\[
\min\{1,\s\} t \le 1-(1-t)^\s \le \max\{1,\s\}t.
\]
Thus, if $\s>0$ in~\eqref{eq-log-cap-first}, we have
\begin{align}  \label{eq-log-cap-use-lemma}
\frac{1}{\s} \Bigl( \log^\s \frac1r - \log^\s \frac1R \Bigr)
&\simeq {\biggl(}\log^\s \frac1r{\biggr)} \biggl( 1 - 
     \biggl(\frac{\log (1/R)}{\log (1/r)} \biggr)^\s \biggr) \\
&\simeq {\biggl(}\log^\s \frac1r {\biggr)}\biggl( 1 - \frac{\log (1/R)}{\log (1/r)} \biggr)
= \biggl( \log \frac1r \biggr)^{\s-1} \log \frac Rr. \nonumber
\end{align}
Since $\s-1=\be/(1-p)$, this together with~\eqref{eq-log-cap-first} gives
\begin{align}  \label{eq-log-cap-s>0}
\cp(B_r,B_R) 
 &\simeq {\biggl( \log^\be \frac1r \biggr)} \biggl( \log \frac Rr \biggr)^{1-p} 
 {\simeq} \frac{\mu(B_r)}{r^p} \biggl( \log \frac Rr \biggr)^{1-p}.
\end{align}
On the other hand, if $\s<0$ in~\eqref{eq-log-cap-first} then replacing $\s$ 
by $\theta=-\s>0$ in~\eqref{eq-log-cap-use-lemma} yields
\begin{align*}  
\frac{1}{\s} \Bigl( \log^\s \frac1r - \log^\s \frac1R \Bigr)
&\simeq \biggl(\log^\s \frac1r \biggr) \biggl(\log^\s \frac1R \biggr) 
    \biggl( \log^\theta \frac1r - \log^\theta \frac1R \biggr) \\
&\simeq \biggl(\log^\s \frac1r \biggr) \biggl(\log^\s \frac1R \biggr) 
        \biggl( \log \frac1r \biggr)^{\theta-1} \log \frac Rr
= \frac{\log^\s (1/R)}{\log (1/r)} \log \frac Rr.
\end{align*}
Since $\s(1-p)=\be(1-(p-1)/\be)$, we obtain 
from \eqref{eq-log-cap-first} that
\begin{align}  \label{eq-log-cap-s<0}
\cp(B_r,B_R) 
& \simeq \biggl( \frac{\log^\s (1/R)}{\log (1/r)} \biggr)^{1-p} 
                 \biggl( \log \frac Rr \biggr)^{1-p} \nonumber\\
& = \biggl( \log^\be \frac1R \biggr)^{1-(p-1)/\be} 
          \biggl( \log^\be \frac1r \biggr)^{(p-1)/\be} 
                \biggl( \log \frac Rr \biggr)^{1-p} \nonumber\\
& \simeq \biggl( \frac{\mu(B_R)}{R^p} \biggr)^{1-(p-1)/\be} 
          \biggl( \frac{\mu(B_r)}{r^p} \biggr)^{(p-1)/\be} 
                \biggl( \log \frac Rr \biggr)^{1-p}.
\end{align}
We now distinguish three cases.

(a) If $\be<0$, then $p=\max\lQ$ and $\s>0$.
Thus \eqref{eq-log-cap-s>0} 
yields
\[
\cp(B_r,B_R) \simeq \frac{\mu(B_r)}{r^p} \biggl( \log \frac Rr \biggr)^{1-p}.
\]
This shows that the lower estimate in 
Proposition~\ref{prop:low bounds borderline-new}\,\ref{it-a-border} is sharp 
and
that \eqref{eq-it-border-b} fails in this case,
despite the fact that $p = \inf \uQ$.

(b) If $0<\be<p-1$, then $p=\min\uQ$ and $\s>0$.
From~\eqref{eq-log-cap-s>0} 
we conclude that
\[
\cp(B_r,B_R) \simeq \frac{\mu(B_r)}{r^p} \biggl( \log \frac Rr \biggr)^{1-p},
\]
this time showing that the upper estimate in 
Proposition~\ref{prop:upper bounds borderline}\,\ref{it3-b} is sharp.

(c) If $\be>p-1$, then $p=\min\uQ$ and $\s<0$.
From~\eqref{eq-log-cap-s<0} we see that
\begin{equation}   \label{eq-cap-est-both-R-r}
\cp(B_r,B_R) \simeq \biggl( \frac{\mu(B_R)}{R^p} \biggr)^{1-(p-1)/\be} 
          \biggl( \frac{\mu(B_r)}{r^p} \biggr)^{(p-1)/\be} 
                \biggl( \log \frac Rr \biggr)^{1-p}.
\end{equation}
Note that both exponents $1-(p-1)/\be$ and $(p-1)/\be$ are positive and
their sum is 1. 
Letting $\be\to\infty$ and $\be\to p-1$, respectively,
shows that in general for $p=\min\uQ$ the estimate 
\[
\frac{\mu(B_R)}{R^p} \biggl( \log \frac Rr \biggr)^{1-p} \simle
\cp(B_r,B_R) \simle \frac{\mu(B_r)}{r^p} \biggl( \log \frac Rr \biggr)^{1-p},
\]
is the best we can hope for, since the definitions of $\lQ$ and $\uQ$
cannot capture the size of $\be$ in $\mu(B_\rho)\simeq\rho^p\log^\be (1/\rho)$,
only its sign.
Thus also the lower estimate in 
Proposition~\ref{prop:low bounds borderline-new}\,\ref{it-b-border} 
is optimal. 
 
In addition, 
if $R$ is fixed and $r <R$, then by \eqref{eq-log-cap-s<0},
\[
\cp(B_r,B_R)  \simeq
\biggl( \log \frac 1 r \biggr)^{p-1} \biggl( \log \frac Rr \biggr)^{1-p}.
\]
When $r \ll R$, this is
substantially smaller  (since $\be>p-1$) 
than the lower bound 
\[
\frac{\mu(B_r)}{r^p}\biggl( \log \frac Rr \biggr)^{1-p}
\simeq\biggl( \log \frac 1 r \biggr)^{\beta}\biggl( \log \frac Rr \biggr)^{1-p}
\] 
claimed 
in~\cite[Theorem~3.2]{GaMa} for the case $p=q(x)=\sup\lQ$. 
Thus the latter estimate cannot be valid
if $p=\sup\lQ=\min\uQ \notin\lQ$. 
Similarly, for $\pt>p=\sup\lQ =\min\uQ \notin\lQ$ 
we have by Proposition~\ref{prop:simple upper bound}
that
\[
\cpt(B_r,B_R) \simle \frac{\mu(B_R)}{R^\pt}
\simeq \biggl( \log \frac 1 R \biggr)^{\beta} R^{p-\pt}.
\]
For $\be>0$ and $r\ll R$, this is again substantially smaller than 
 \[
\frac{\mu(B_r)}{r^p} R^{p-\pt} 
   \simeq \biggl( \log \frac 1 r \biggr)^{\beta} R^{p-\pt},
\]
showing that the lower bound claimed  
in~\cite[Theorem~3.2]{GaMa} 
for the case $\pt>q(x)$ cannot be valid in general. 
Nevertheless, let us point out that if $q(x)=\max\lQ$,
then the estimates given in \cite[Theorem~3.2]{GaMa}
for the cases $\pt=q({x})$ and $\pt>q({x})$
are (essentially) the same as our 
Propositions~\ref{prop:low bounds borderline-new}\,\ref{it-a-border}
and~\ref{prop:low bounds beyond borderline}\,\ref{it-1-beyond},
respectively.

We now turn to the $S$-sets.
If $\be>0$, then $\lSo=\lQ=(0,p)$ and $\uSo=\uQ=[p,\infty)$.
Thus, Proposition~\ref{prop:zero cap} is of no use,
and indeed we can 
show that both 
$\Cp(\{0\})=0$ and $\Cp(\{0\})>0$
are possible in this case:

If $\sigma <0$, i.e.\ if $\be >p-1$,
then 
$
      \lim_{r\to0} \cp(B_r,B_R) >0,
$
by \eqref{eq-log-cap-first}.
In the same way as at the end of the proof of Proposition~\ref{prop:zero cap}
it follows that $\Cp(\{0\})>0$ and $\cp(\{0\},B)>0$
for every ball $B \ni 0$.

If instead $\sigma >0$, i.e.\ if $0 < \be < p-1$,
then 
$
      \lim_{r\to0} \cp(B_r,B_R) =0,
$
by \eqref{eq-log-cap-first}, from which it
directly follows that $\cp(\{0\},B)=0$
for every ball $B \ni 0$.
Using that $\Cp(\{0\}) \le \cp(\{0\},B) + \mu(B)$
shows that also $\Cp(\{0\})=0$.
\end{example}

\begin{example}  \label{ex-log-weights-large}
Let  
\[  
    w(\rho)=\begin{cases}
   \rho^{p-n} \log^\be \rho & \text{for } \rho\ge e, \\
   \rho^{p-n}, & \text{otherwise},
   \end{cases}
\]
in $\R^n$, $n \ge 2$,
where $p>1$ and  $\be\in\R$ is arbitrary, as 
in the second part of Example~\ref{ex-log-weight-Q}. 
This example is similar to the previous 
example, 
but the roles of $r$ and $R$ are in a sense reversed 
and thus we obtain
different estimates. 

As in Example~\ref{ex-log-weights}, we have $\sup\lQ=\inf\uQ={p}$, 
but 
if $\be>0$ it is now $\sup\lQ$ that is attained, while for $\be<0$
we have that $\inf\uQ$ is attained.
Since
\[
f'(\rho) \simeq w(\rho)\rho^{n-1} = \rho^{p-1}\log^\be \rho
 \quad \text{for }\rho>e,
\]
we have by Proposition~\ref{prop-cap-formula-radial} for 
$e<r<R$ the estimate 
\begin{align}  \label{eq-log-cap-first-large}
\cp(B_r,B_R)^{1/(1-p)}  
& \simeq \int_r^R \log^{\be/(1-p)} {(\rho)} \,\frac{d\rho}{\rho}
  \nonumber \\
&
= \int_{\log r}^{\log R}  t^{\be/(1-p)} \,dt 
= \frac{\log^\s R - \log^\s r}{\s}
\end{align}
if $\s=1+\be/(1-p)\ne0$.

The simplification of~\eqref{eq-log-cap-first-large} can be carried out analogously
to the previous example, and we obtain for $\s>0$ that
\begin{align}  \label{eq-log-cap-s>0-large}
\cp(B_r,B_R) &\simeq \Bigl( \log^\s R - \log^\s r \Bigr)^{1-p}
\nonumber\\
 &\simeq \biggl( \log R \biggr)^\be \biggl( \log \frac Rr \biggr)^{1-p} 
 \simeq \frac{\mu(B_R)}{R^p} \biggl( \log \frac Rr \biggr)^{1-p}.
\end{align}
This yields in the cases corresponding to (a) and (b) of Example~\ref{ex-log-weights}
the following conclusions:

(a) If $\be<0$, then $p=\min\uQ$ and $\s>0$.
Thus~\eqref{eq-log-cap-s>0-large} 
shows the sharpness of the lower estimate in 
Proposition~\ref{prop:low bounds borderline-new}\,\ref{it-b-border}.
It also shows that \eqref{eq-it-border-a} fails in this case,
despite the fact that $p = \sup \lQ$.

(b) If $0<\be<p-1$, then $p=\max\lQ$ and $\s>0$,
and from~\eqref{eq-log-cap-s>0-large} we can conclude 
that also the upper estimate in 
Proposition~\ref{prop:upper bounds borderline}\,\ref{it3-a} is sharp.

We also mention that the case $\s<0$ can be studied
just as in Example~\ref{ex-log-weights}\,(c), this time showing the
sharpness of the lower bound in 
Proposition~\ref{prop:low bounds borderline-new}\,\ref{it-a-border},
although this was already known from the case (a) of 
Example~\ref{ex-log-weights}; see however Remark~\ref{rmk-log-weights} below.

Finally, if $\be >0$, then $\lSi=\lQ =(0,p]$ and
$\uSi=\uQ=(p,\infty)$, and thus 
Proposition~\ref{prop:S-infty} is of no use. 
Considering the two cases
$\sigma>0$ and $\sigma<0$ shows that indeed both possibilities
$\cp(B_r,X)=0$ and $\cp(B_r,X)>0$ can happen in this case,
cf.\ the end of Example~\ref{ex-log-weights}.
\end{example}

\begin{remark}\label{rmk-log-weights}
In Example~\ref{ex-log-weights} we have $\lQ=\lQo$ and $\uQ=\uQo$,
and thus the conclusions of this example also show the sharpness of
the respective restricted capacity estimates, that is, the analogues
of Proposition~\ref{prop:upper bounds borderline}\,\ref{it3-b} and
Proposition~\ref{prop:low bounds borderline-new}\,\ref{it-a-border} and~\ref{it-b-border}
for $\lQo$ and $\uQo$ and for radii $0<2r\le R \le R_0$.
In particular, Theorem~\ref{thm:borderline-intro},
with the exception of the upper bound in~\eqref{eq-it3-a-thm-intro},
is shown to be sharp.

Similarly,
in Example~\ref{ex-log-weights-large} we have $\lQ=\lQi$ and $\uQ=\uQi$,
and so we obtain the sharpness of the analogues
of Proposition~\ref{prop:upper bounds borderline}\,\ref{it3-a} and
Proposition~\ref{prop:low bounds borderline-new}\,\ref{it-a-border} and~\ref{it-b-border}
for $\lQi$ and $\uQi$ and for radii $R_0\le r\le R/2$.

Nevertheless, these examples still leave open the sharpness of one of the
upper bounds in each of
the restricted versions of Proposition~\ref{prop:upper bounds borderline}: We
do not know if the upper estimate~\eqref{eq-it3-a}
is sharp for $p\in \lQo$ and $0<2r\le R\le R_0$, 
or if~\eqref{eq-it3-b}
is sharp for $p\in \uQi$ and $R_0\le r\le R/2$.
\end{remark}

\section{Radial weights and stretchings in \texorpdfstring{$\R^n$}{Rn}}
\label{sect:weights}

In this section we consider radial weights in $\R^n$, $n\ge2$, and give
a sufficient condition for when they are admissible, and in particular satisfy
the global doubling condition and a global 
Poincar\'e inequality, 
thus 
providing a basis for our examples in Section~\ref{sect-sharpness}.
This will be achieved by comparing such weights with
suitable powers of Jacobians of quasiconformal mappings on $\R^n$.
In particular, in Theorem~\ref{thm-quasiconf}
we characterize those radial stretchings in $\R^n$ which are quasiconformal.
The same condition was considered in $\R^2$ by 
Astala--Iwaniec--Martin~\cite[Section~2.6]{AsIwMa} 
and for continuously differentiable mappings in $\R^n$ by 
Manojlovi\'c~\cite[Example~2.9]{Manojlovic},
while for power-like radial stretchings the corresponding result is well known, see
e.g.\ Example~16.2 in V\"ais\"al\"a~\cite{vaisala}.
Both in~\cite{AsIwMa} and~\cite{Manojlovic}, the result is obtained by differentiation and
uses the analytical definition of quasiconformal mappings, based on the Jacobian determinant.
Our assumptions are weaker and the method is different 
and based on more direct estimates of the linear dilation, 
rather than on the differentiable structure of $\R^n$.
We use
the following metric definition of quasiconformal mappings, provided by e.g.\ 
Theorem~34.1 in~\cite{vaisala}, and applicable also in metric spaces.

\begin{deff}
A homeomorphism $F\colon\R^n\to\R^n$, $n \ge 2$,
is a \emph {quasiconformal mapping} if its 
\emph{linear dilation}
\[
H_F(x):= \limsup_{r\to0} \frac{L(x,r)}{l(x,r)}
\]
is bounded. Here
\[
L(x,r):= \max_{|x-y|=r} |F(x)-F(y)| \quad \text{and} \quad
l(x,r):= \min_{|x-y|=r} |F(x)-F(y)|.
\]
\end{deff}

We shall consider radial stretchings $F\colon\R^n\to\R^n$ given by 
\begin{equation}   \label{eq-def-F}
F(x)=h(|x|)x=k(|x|)\frac{x}{|x|} \quad \text{if } x\ne0, \quad \text{and} \quad F(0)=0,
\end{equation}
where $h(\rho)=k(\rho)/\rho$,
and $k$ is a 
locally absolutely continuous 
homeomorphism of $[0,\infty)$
satisfying $k(0)=0$ and
\begin{equation}  \label{eq-k'-k/rho-est}
m \le \frac{\rho k'(\rho)}{k(\rho)} \le M
\end{equation}
for a.e.\ $\rho\in[0,\infty)$ and some $0<m\le M<\infty$.
It is easily verified that the inverse mapping of $F$ is given by
\[
F^{-1}(z)=k^{-1}(|z|)\frac{z}{|z|},
\] where the inverse $k^{-1}$ is (under our assumptions) also locally
absolutely continuous, and by~\eqref{eq-k'-k/rho-est} we have for a.e.\ 
$\rho\in [0,\infty)$ that
\begin{equation}   \label{eq-inverse-derivative}
(k^{-1}(\rho))' = \frac{1}{k'(k^{-1}(\rho))} \simeq \frac{k^{-1}(\rho)}{k(k^{-1}(\rho))}
= \frac{k^{-1}(\rho)}{\rho},
\end{equation}
where the implicit constants in $\simeq$ are $1/M$ and $1/m$.

We are going to obtain the following characterization.

\begin{thm} \label{thm-quasiconf}
Assume that the mapping $F\colon\R^n\to\R^n$, $n \ge 2$,
is defined as in~\eqref{eq-def-F}.
Then  $F$ is quasiconformal if and only if 
\eqref{eq-k'-k/rho-est} holds 
for a.e.\ $\rho\in[0,\infty)$ and some\/ $0 <m \le M < \infty$.
\end{thm}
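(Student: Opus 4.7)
The plan is to reduce everything to a one-variable analysis. First, since $k$ is a homeomorphism of $[0,\infty)$ with $k(0)=0$, the map $F$ is a homeomorphism of $\R^n$ with inverse $F^{-1}(z) = k^{-1}(|z|)z/|z|$ for $z\ne 0$ and $F^{-1}(0)=0$, so $H_F$ is well defined. By rotational symmetry $H_F(x)$ depends only on $t:=|x|$, and $H_F(0)=1$ trivially, so we fix $t>0$ and $r<t$. The law of cosines applied to $|F(y) - F(x)|^2 = |F(y)|^2 + |F(x)|^2 - 2 F(y) \cdot F(x)$, together with $F(y)\cdot F(x) = k(\rho)k(t)(y\cdot x)/(\rho t)$ where $\rho := |y|$, gives the identity
\begin{equation*}
|F(y)-F(x)|^2 = (k(\rho)-k(t))^2 + \frac{k(\rho)k(t)}{\rho t}\bigl(r^2-(\rho-t)^2\bigr),
\end{equation*}
whose right-hand side depends, on $\{y:|y-x|=r\}$, only on $s:=\rho-t\in[-r,r]$. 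Thus $L(x,r)$ and $l(x,r)$ are obtained by maximizing and minimizing this single function of $s\in[-r,r]$.

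For the ``if'' direction, integration of \eqref{eq-k'-k/rho-est} yields $k(\tau)/\tau = (k(t)/t)(1+O(|\tau-t|/t))$ uniformly for $\tau\in[t-r,t+r]$ as $r/t\to 0$, with constants depending only on $m$ and $M$. This immediately gives $k(\rho)k(t)/(\rho t)=(k(t)/t)^2(1+o(1))$, and writing $k(\rho)-k(t)=\int_t^\rho(\tau k'(\tau)/k(\tau))(k(\tau)/\tau)\,d\tau$ produces the uniform two-sided bound
\begin{equation*}
m\,\frac{k(t)}{t}\,|s|\,(1-o(1))\le |k(\rho)-k(t)| \le M\,\frac{k(t)}{t}\,|s|\,(1+o(1)).
\end{equation*}
Inserting these into the above identity and optimizing in $s\in[-r,r]$ yields $l(x,r)\ge\min\{m,1\}(k(t)/t)\,r\,(1-o(1))$ and $L(x,r)\le\max\{M,1\}(k(t)/t)\,r\,(1+o(1))$, and hence $H_F(x)\le\max\{M,1\}/\min\{m,1\}<\infty$ uniformly in $x$.

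Conversely, assume $H_F\le K$ and fix any $t>0$ at which $k$ is differentiable; this is a full-measure subset of $(0,\infty)$ since $k$ is locally absolutely continuous. Substituting $y=x\pm r\,x/|x|$ into the identity gives $|F(y)-F(x)|=|k(t\pm r)-k(t)|$, so that $|F(y)-F(x)|/r\to k'(t)$, while for $y=x+rw$ with $w\perp x/|x|$ and $|w|=1$ one has $\rho-t=O(r^2/t)$ and consequently $|F(y)-F(x)|/r\to k(t)/t$. Thus $L(x,r)/l(x,r)\to\max\{k'(t),k(t)/t\}/\min\{k'(t),k(t)/t\}$, and the bound $H_F(x)\le K$ forces $k'(t)>0$ and $tk'(t)/k(t)\in[1/K,K]$ at every such $t$, giving \eqref{eq-k'-k/rho-est} with $m=1/K$ and $M=K$. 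The main subtlety, and the reason for working with the integrated form of~\eqref{eq-k'-k/rho-est} in the ``if'' direction, is that quasiconformality requires boundedness of $H_F$ at \emph{every} point $x$, not merely almost everywhere; the integrated estimate circumvents this by upgrading the a.e.\ hypothesis on $k'$ to a pointwise two-sided control of the difference quotient $|k(\rho)-k(t)|/|\rho-t|$ in terms of $k(t)/t$.
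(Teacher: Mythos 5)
Your proof is correct but takes a genuinely different route from the paper's in the ``if'' direction, and there are a couple of places worth tightening.

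For the sufficiency, you derive the exact law-of-cosines identity $|F(y)-F(x)|^2 = (k(\rho)-k(t))^2 + \frac{k(\rho)k(t)}{\rho t}(r^2-s^2)$ and perform an asymptotic analysis in $s\in[-r,r]$. The paper instead proves a separate global lemma (Lemma~\ref{lem-Lip-const}) bounding $|F(x)-F(y)|/|x-y|$ between $\frac{m}{1+2m}\inf h$ and $(M+2)\sup h$ for \emph{all} $x,y$, via the triangle inequality $|F(x)-F(y)|\le h(|y|)|x-y|+|x|\,|h(|y|)-h(|x|)|$ plus the pointwise bound $|h'(\xi)|\le (M+1)h(\xi)/|x|$, with the lower bound obtained by applying the argument to $F^{-1}$. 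Your approach is more elementary and gives the sharper asymptotic constant $\max\{M,1\}/\min\{m,1\}$, whereas the paper's lemma is coarser but global in $x,y$ and is reused to control the Jacobian in the passage to Theorem~\ref{thm-cf-admiss}; so the paper's method serves a dual purpose that the asymptotic identity alone does not. Two small points: when you claim $L/l\to\max\{k'(t),k(t)/t\}/\min\{k'(t),k(t)/t\}$, you should say explicitly that in the asymptotic approximation the right-hand side of the identity is linear in $s^2$, so the extrema over $s\in[-r,r]$ occur at $s=0$ and $s=\pm r$ (the radial and tangential directions you test are actually the extremal ones).

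For the converse, you invoke local absolute continuity of $k$ to obtain a.e.\ differentiability, but in the only-if direction the paper does not take this as given: it assumes only that $k$ is a homeomorphism and derives a.e.\ differentiability of $k$ from quasiconformality via Väisälä's differentiability theorem (Theorem~32.1 in~\cite{vaisala}). Your argument can be repaired without this citation by noting that a homeomorphism $k$ of $[0,\infty)$ with $k(0)=0$ is increasing, and monotone functions are differentiable a.e.; as written, however, it silently relies on a hypothesis the paper treats as a conclusion. Once $k'(t)$ exists, your radial/tangential computation of the two limits $|F(y)-F(x)|/r\to k'(t)$ and $\to k(t)/t$, and the deduction $tk'(t)/k(t)\in[1/K,K]$, matches the paper's argument in spirit.
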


The following lemma gives a basis for 
the sufficiency part of the theorem. 

\begin{lem}  \label{lem-Lip-const}
If $F\colon\R^n\to\R^n$ is as in~\eqref{eq-def-F} and satisfies~\eqref{eq-k'-k/rho-est},
then for all $x,y\in\R^n$, with\/ $|x|\le|y|$ and $x \ne y$, we have
\begin{equation}   \label{eq-Fx-Fy-h}
\frac{m}{1+2m} \inf_{|x|\le\xi\le|y|} h(\xi) \le \frac{|F(x)-F(y)|}{|x-y|} 
\le (M+2) \sup_{|x|\le\xi\le|y|} h(\xi).
\end{equation}
\end{lem}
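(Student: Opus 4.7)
The plan is to reduce the claimed inequality to purely one-dimensional estimates via the law of cosines. Let $\theta\in[0,\pi]$ be the angle between $x$ and $y$ (with any convention when $x=0$). Then
\[
|x-y|^2 = (|y|-|x|)^2 + 2|x||y|(1-\cos\theta),
\]
and expanding $|h(|x|)x - h(|y|)y|^2$ together with $k(\rho)=\rho h(\rho)$ gives in exactly the same fashion
\[
|F(x)-F(y)|^2 = (k(|y|)-k(|x|))^2 + 2k(|x|)k(|y|)(1-\cos\theta).
\]
Setting $s=|y|-|x|$ and $t^2=2|x||y|(1-\cos\theta)$, so that $|x-y|^2=s^2+t^2$, it will suffice to estimate the radial summand $k(|y|)-k(|x|)$ and the tangential factor $k(|x|)k(|y|)$ separately in each direction.

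For the upper bound we use the local absolute continuity of $k$ together with~\eqref{eq-k'-k/rho-est}, which gives $k'(\rho)\le M h(\rho)$ and hence
\[
k(|y|)-k(|x|) = \int_{|x|}^{|y|} k'(\rho)\,d\rho \le M s \sup_{|x|\le\xi\le|y|} h(\xi).
\]
The identities $k(|x|)=|x|h(|x|)$ and $k(|y|)=|y|h(|y|)$ immediately yield $k(|x|)k(|y|)\le|x||y|(\sup h)^2$. Summing the squared contributions and observing that $\max(M,1)\le M+2$, we conclude
\[
|F(x)-F(y)|^2 \le (\sup h)^2(M^2 s^2+t^2) \le (M+2)^2(\sup h)^2 |x-y|^2,
\]
which is the right-hand inequality in~\eqref{eq-Fx-Fy-h}.

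The lower bound is then obtained by the symmetric argument: \eqref{eq-k'-k/rho-est} gives $k'(\rho)\ge m h(\rho)$, so $k(|y|)-k(|x|)\ge m s \inf_{|x|\le\xi\le|y|} h(\xi)$, while trivially $k(|x|)k(|y|)\ge |x||y|(\inf h)^2$. Inserting these into the decomposition of $|F(x)-F(y)|^2$ yields
\[
|F(x)-F(y)|^2 \ge (\inf h)^2(m^2 s^2 + t^2),
\]
and, with $c=m/(1+2m)$, a short computation shows $c\le\min(m,1)$, so that $m^2-c^2\ge 0$ and $1-c^2\ge 0$ and hence $m^2s^2+t^2\ge c^2(s^2+t^2)=c^2|x-y|^2$. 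This gives the left-hand inequality. There is no serious obstacle; the only points requiring care are this final elementary estimate and the degenerate case $x=0$, which fits the same framework with $t=0$, whereupon the claim reduces to sandwiching $h(|y|)$ between $\inf h$ and $\sup h$ on $(0,|y|]$, a triviality.
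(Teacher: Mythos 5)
Your proof is correct, and it takes a genuinely different route from the paper. The paper's proof establishes the upper bound via a triangle-inequality decomposition $|F(x)-F(y)|\le h(|y|)|x-y|+|x|\,|h(|y|)-h(|x|)|$ and the estimate $|h'(\xi)|\le (M+1)h(\xi)/|x|$, and then obtains the lower bound by applying the same argument to the inverse map $F^{-1}$, whose generating function $k^{-1}$ satisfies the analogue of~\eqref{eq-k'-k/rho-est} with $1/M$ and $1/m$ by~\eqref{eq-inverse-derivative}. Your law-of-cosines identity
\[
|F(x)-F(y)|^2 = \bigl(k(|y|)-k(|x|)\bigr)^2 + 2\,k(|x|)k(|y|)(1-\cos\theta)
\]
mirrors the decomposition of $|x-y|^2=s^2+t^2$ exactly and lets you bound the radial and tangential contributions separately in both directions, so the lower bound comes out of the same calculation rather than by inversion. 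Two by-products of this symmetry are worth pointing out: you need neither the explicit form of $F^{-1}$ nor~\eqref{eq-inverse-derivative}, and the argument actually yields the sharper constants $\min\{m,1\}$ and $\max\{M,1\}$ in place of $m/(1+2m)$ and $M+2$; the latter then follow by the elementary inequalities $m/(1+2m)\le\min\{m,1\}$ and $\max\{M,1\}\le M+2$, as you note. Your handling of the degenerate case $x=0$ (where $t=0$ and $k(0)=0$) is also correct.
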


\begin{proof}
For $x=0$ this is easily checked using the definition of $F$, so assume for the
rest of the proof that $x \ne 0$.
The triangle inequality yields
\begin{align}   \label{eq-triangle-ineq}
|F(x)-F(y)| &= \bigl|h(|x|)x - h(|y|)x + h(|y|)x - h(|y|)y\bigr| \nonumber\\
&\le h(|y|) |x-y| + |x|\, \bigl|h(|y|)-h(|x|)\bigr|.
\end{align}
Note that $h$ is also locally absolutely continuous and the 
assumption~\eqref{eq-k'-k/rho-est} gives for a.e.\ $|x|\le\xi\le|y|$,
\[
|h'(\xi)|=\biggl| \frac{k'(\xi)}{\xi}-\frac{k(\xi)}{\xi^2} \biggr| 
\le \frac{(M+1) k(\xi)}{\xi^2} \le (M+1) \frac{h(\xi)}{|x|}.
\]
Hence
\[ 
\bigl|h(|y|)-h(|x|)\bigr|\le (|y|-|x|) \esssup_{|x|\le\xi\le|y|} |h'(\xi)|
\le (M+1) |x-y| \sup_{|x|\le\xi\le|y|} \frac{h(\xi)}{|x|}.
\] 
Inserting this into~\eqref{eq-triangle-ineq} proves the 
second inequality in \eqref{eq-Fx-Fy-h}.

To prove the first inequality we use the inverse mapping $F^{-1}(z)=k^{-1}(|z|)z/|z|$.
By~\eqref{eq-inverse-derivative}, it satisfies~\eqref{eq-k'-k/rho-est}
with $m$ and $M$ replaced by $1/M$ and $1/m$.
The first part of the proof applied to $F^{-1}$ with $z=F(x)$ and 
$w=F(y)$ then yields
\[
\frac{|x-y|}{|F(x)-F(y)|} = \frac{|F^{-1}(z)-F^{-1}(w)|}{|z-w|}
\le \biggl( \frac{1}{m}+2 \biggr) \sup_{|z|\le\z\le|w|} \frac{k^{-1}(\z)}{\z}. 
\]
Since $k^{-1}(\z)/\z = \xi/k(\xi) = 1/h(\xi)$ with $\xi=k^{-1}(\z)$, 
the first inequality in \eqref{eq-Fx-Fy-h} follows.
\end{proof}

\begin{proof}[Proof of Theorem~\ref{thm-quasiconf}.]
First assume that \eqref{eq-k'-k/rho-est} holds.
If $x=0$, then $L(x,r)=l(x,r)$ by the definition of $F$, and so $H_F(0)=1$.
If on the other hand
$x \ne 0$, then by Lemma~\ref{lem-Lip-const} and the definition of $F$ we have,
for $0<r<|x|$,
\begin{equation}   \label{eq-est-L-l-h}
L(x,r) \simle r  \sup_{|x|-r\le\xi\le|x|+r} h(\xi)  \quad \text{and} \quad 
l(x,r) \simge r  \inf_{|x|-r\le\xi\le|x|+r} h(\xi).
\end{equation}
Inserting this into the definition of $H_F(x)$ and letting $r\to0$ 
shows that $F$ is quasiconformal.

Conversely, assume that $F$ is quasiconformal.
Since the linear dilation $H_F(x)$ is bounded,
Theorem~32.1 in V\"ai\-s\"a\-l\"a~\cite{vaisala} 
shows that $F$ is differentiable a.e.
It follows that $k'$ exists a.e.\ in $(0,\infty)$.
To prove~\eqref{eq-k'-k/rho-est}, choose $K>0$ such that $H_F < K$ in $\R^n$.
Fix $x\in\R^n$ with $|x|=1$ and let $\rho>0$ be arbitrary but such that $k'(\rho)$ exists.
Then there exists $0 < r_0 < \rho$ such that 
\(
L(\rho x,r) \le K l(\rho x,r) 
\)
whenever $0<r\le r_0$.
For each such $r$ find $y\in\R^n$ such that $|y|=1$ and $|x-y|=r/\rho$.
Then $|\rho x -\rho y|=r$ and
\[
l(\rho x,r) \le |F(\rho x) - F(\rho y)| = k(\rho)|x-y| = \frac{k(\rho) r}{\rho}.
\]
On the other hand, 
\begin{align*}
\frac{k(\rho+r)-k(\rho)}{r} & = 
\frac{|F((\rho+r)x)-F(\rho x)|}{r}
\le \frac{L(\rho x,r)}{r} \le K \frac{l(\rho x,r)}{r}
\le K \frac{k(\rho)}{\rho},
\end{align*}
and the quotient $(k(\rho)-k(\rho-r))/r$ can be treated similarly.
Letting $r\to0$ shows that $k'(\rho)\le K k(\rho)/\rho$.
Applying the same argument to the quasiconformal mapping $F^{-1}$
yields, with $\z=k(\rho)$,
\[
\frac{1}{k'(\rho)} = (k^{-1}(\z))' \le K \frac{k^{-1}(\z)}{\z} = \frac{K\rho}{k(\rho)},
\]
i.e.\ $k'(\rho)\ge k(\rho)/K\rho$.
\end{proof}

Now assume that $F$ is as in Lemma~\ref{lem-Lip-const}.
The Jacobian $J_F$ of $F$ is the infinitesimal area distortion under
$F$, and thus~\eqref{eq-est-L-l-h} 
implies that $J_F(x)\simeq h(|x|)^n$
for a.e.\ $x\in\R^n$.
Since 
Jacobians of quasiconformal mappings
are strong $A_\infty$ weights (by a result due to Gehring~\cite{Gehring},
cf.\ pp.\ 101--102 in David--Semmes~\cite{DaSe} and Theorem~1.5 in 
Heinonen--Koskela~\cite{HeKo-Scand}), 
Theorem~1 in J.~Bj\"orn~\cite{JB-Fenn} shows that the weight
\[
J_F(x)^{1-p/n}\simeq h(|x|)^{n-p}=\biggl( \frac{k(|x|)}{|x|} \biggr)^{n-p}
\]
is \p-admissible when $1\le p\le n$. 
(For $1<p\le n$, one can instead use Theorem~15.33 in 
Heinonen--Kilpel\"ainen--Martio~\cite{HeKiMa} or Corollary~1.10 in 
Heinonen--Koskela~\cite{HeKo-Scand}.)
We thus have the following result.

\begin{thm}  \label{thm-cf-admiss}
Let $k\colon[0,\infty)\to[0,\infty)$ be a 
locally absolutely continuous homeomorphism of\/ $[0,\infty)$
satisfying~\eqref{eq-k'-k/rho-est} for a.e.\ $\rho\in[0,\infty)$.
Then the weight $w(x)=(k(|x|)/|x|)^{n-p}$ with\/ $1\le p\le n$ 
is \p-admissible in\/ $\R^n$, $n\ge2$.
\end{thm}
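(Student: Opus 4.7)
The plan is to exploit the fact that the stated condition on $k$ is precisely the characterization of quasiconformality of the radial stretching $F(x)=(k(|x|)/|x|)x$ given in Theorem~\ref{thm-quasiconf}, and then to invoke the known admissibility of weights built from Jacobians of quasiconformal maps on $\R^n$.

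First I would define $F$ by \eqref{eq-def-F} with $h(\rho)=k(\rho)/\rho$. Since $k$ is a locally absolutely continuous homeomorphism of $[0,\infty)$ with $k(0)=0$ and \eqref{eq-k'-k/rho-est} holds, Theorem~\ref{thm-quasiconf} shows that $F$ is quasiconformal on $\R^n$. I would then use the estimates \eqref{eq-est-L-l-h} from the proof of Theorem~\ref{thm-quasiconf} (i.e.\ $L(x,r)\simeq r h(|x|)$ and $l(x,r)\simeq r h(|x|)$ as $r\to 0$, for $x\ne 0$), together with the fact that a quasiconformal map is differentiable a.e.\ (V\"ais\"al\"a~\cite{vaisala}, Theorem~32.1), to conclude that the Jacobian satisfies
\[
J_F(x)\simeq h(|x|)^n = \biggl(\frac{k(|x|)}{|x|}\biggr)^{n}
\quad\text{for a.e.\ } x\in\R^n.
\]

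Next I would invoke Gehring's theorem, which asserts that the Jacobian of a quasiconformal map on $\R^n$ is a strong $A_\infty$ weight (see David--Semmes~\cite{DaSe}, pp.~101--102, and Heinonen--Koskela~\cite{HeKo-Scand}, Theorem~1.5). By Theorem~1 of J.~Bj\"orn~\cite{JB-Fenn}, any power $J_F^{1-p/n}$ of such a strong $A_\infty$ Jacobian, with $1\le p\le n$, is a $p$-admissible weight on $\R^n$. Since admissibility is preserved under multiplication by constants and, more generally, under passing to a comparable weight, the estimate $J_F(x)^{1-p/n}\simeq(k(|x|)/|x|)^{n-p}=w(x)$ yields the $p$-admissibility of $w$.

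Since every step is a direct appeal to a cited result, there is no real obstacle; the only subtle point is the verification that $J_F\simeq h(|x|)^n$ a.e., which however follows cleanly from the radial, pointwise linear-dilation bounds in~\eqref{eq-est-L-l-h} combined with a.e.\ differentiability. For completeness I would also mention the alternative route available in the range $1<p\le n$, using Theorem~15.33 in Heinonen--Kilpel\"ainen--Martio~\cite{HeKiMa} or Corollary~1.10 in Heinonen--Koskela~\cite{HeKo-Scand}, in order not to rely on the $p=1$ endpoint solely through~\cite{JB-Fenn}.
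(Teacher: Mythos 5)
Your proposal matches the paper's argument essentially step for step: establish quasiconformality of the radial stretching via Theorem~\ref{thm-quasiconf}, deduce $J_F\simeq h(|x|)^n$ from the linear-dilation bounds~\eqref{eq-est-L-l-h} and a.e.\ differentiability, invoke Gehring's theorem that Jacobians of quasiconformal maps are strong $A_\infty$ weights, and then apply Theorem~1 of J.~Bj\"orn~\cite{JB-Fenn} to obtain \p-admissibility of $J_F^{1-p/n}\simeq w$, with the same alternative references noted for $1<p\le n$. There is no substantive difference from the paper's proof.
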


Now let $w$ be a radial weight on $\R^n$, $n\ge2$, i.e.\ $w(x)=w(|x|)$ where
$0\le w\in L^1\loc(0,\infty)$.
Here we abuse the notation and use $w$ both for the weight itself and for its 
one-dimensional representation on $(0,\infty)$.
With the help of Theorem~\ref{thm-cf-admiss} we obtain the following 
sufficient condition for admissibility of radial weights.

\begin{prop}   \label{prop-suff-admiss-w}
Assume that $w\colon(0,\infty)\to(0,\infty)$ is locally absolutely continuous and that 
for some $\ga_1<n-1$, $\ga_2<\infty$ and a.e.\ $\rho>0$ we have,
\begin{equation}   \label{eq-cond-w'/w}
-\ga_1 \le \frac{\rho w'(\rho)}{w(\rho)} \le \ga_2.
\end{equation}
Then the radial weight $w(x)=w(|x|)$ is\/ $1$-admissible in\/ $\R^n$, $n\ge2$.
\end{prop}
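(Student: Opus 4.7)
The plan is to reduce the statement to Theorem~\ref{thm-cf-admiss} with $p=1$ by exhibiting a radial stretching whose associated weight is exactly $w$. Concretely, I would set
\[
k(\rho) := \rho\, w(\rho)^{1/(n-1)}, \quad \rho>0, \quad k(0):=0,
\]
so that $(k(|x|)/|x|)^{n-1}=w(|x|)$ by construction. Once $k$ is shown to be a locally absolutely continuous homeomorphism of $[0,\infty)$ satisfying~\eqref{eq-k'-k/rho-est}, Theorem~\ref{thm-cf-admiss} applied with $p=1$ will immediately deliver the $1$-admissibility of $w$.

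The first step is a direct computation of the logarithmic derivative of $k$. Since $w$ is positive and locally absolutely continuous on $(0,\infty)$, so is $w^{1/(n-1)}$ (by composition with a smooth function on $(0,\infty)$), and hence also $k$. A straightforward product-rule calculation gives
\[
\frac{\rho k'(\rho)}{k(\rho)} = 1 + \frac{1}{n-1}\cdot\frac{\rho w'(\rho)}{w(\rho)} \quad \text{for a.e.\ } \rho>0,
\]
and combining this with~\eqref{eq-cond-w'/w} and the hypothesis $\ga_1<n-1$ yields
\[
0 < m := 1 - \frac{\ga_1}{n-1} \le \frac{\rho k'(\rho)}{k(\rho)} \le 1 + \frac{\ga_2}{n-1} =: M < \infty,
\]
which is precisely~\eqref{eq-k'-k/rho-est}.

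The remaining task is to confirm the global properties of $k$ at $0$ and at $\infty$. The bound $\rho k'/k\ge m>0$ forces $k$ to be strictly increasing, and integrating the logarithmic-derivative inequality between $0<\rho_1<\rho_2$ yields
\[
\Bigl(\frac{\rho_1}{\rho_2}\Bigr)^{M} \simle \frac{k(\rho_1)}{k(\rho_2)} \simle \Bigl(\frac{\rho_1}{\rho_2}\Bigr)^{m}.
\]
Letting $\rho_1\to 0^+$ and $\rho_2\to\infty$ respectively shows $k(\rho)\to 0$ and $k(\rho)\to\infty$, so $k$ is a homeomorphism of $[0,\infty)$ with $k(0)=0$. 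For local absolute continuity on $[0,R]$, the estimate $k(\rho)\simle \rho^m$ combined with $k'(\rho)\le M k(\rho)/\rho$ gives $k'(\rho)\simle \rho^{m-1}$ near $0$, which is integrable because $m>0$; since $k$ is already absolutely continuous on every $[\eps,R]$ and continuous at $0$, this upgrades to absolute continuity on $[0,R]$.

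With these ingredients verified, Theorem~\ref{thm-cf-admiss} applied to $k$ with $p=1$ yields the $1$-admissibility of $(k(|x|)/|x|)^{n-1}=w(|x|)$, completing the argument. The only mildly delicate point in the whole plan is the absolute continuity of $k$ up to the origin, but this is handled routinely via the power bound $k(\rho)\simle \rho^m$ coming from the lower estimate on $\rho k'/k$; everything else is an algebraic manipulation of logarithmic derivatives, with the key structural input being the sharp threshold $\ga_1<n-1$ that guarantees $m>0$.
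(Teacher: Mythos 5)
Your proof is correct and takes essentially the same route as the paper's: define $k(\rho)=\rho\,w(\rho)^{1/(n-1)}$, check that $\rho k'(\rho)/k(\rho)=1+\tfrac{1}{n-1}\cdot\tfrac{\rho w'(\rho)}{w(\rho)}\in[m,M]$ with $m=1-\ga_1/(n-1)>0$, verify that $k$ is a locally absolutely continuous homeomorphism of $[0,\infty)$, and invoke Theorem~\ref{thm-cf-admiss} with $p=1$. The one place you are more careful than the paper is the absolute continuity of $k$ on $[0,R]$ (the paper just asserts it); your integrability argument via $k'(\rho)\simle\rho^{m-1}$ is a clean way to close that small gap.
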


\begin{remark} \label{rmk-cor-w}
In particular, Proposition~\ref{prop-suff-admiss-w}
shows that all the weights 
\[
w(x)=\begin{cases}
          |x|^\al \log^\be(1/{|x|}), & \text{if } 0<|x|\le1/e, \\
          |x|^\al, & \text{otherwise},
        \end{cases}
\]
with $\al>1-n$ and $\be\in\R$, are 1-admissible in $\R^n$, $n\ge2$.
We expect these weights to be 1-admissible (and even $A_1$)
for $-n<\al \le 1-n$ as well, 
but the $A_1$ condition needs to be checked in this case.
This is well known for $\be=0$, 
see Heinonen--Kilpel\"ainen--Martio~\cite[p.\ 10]{HeKiMa},
thus showing that the above condition for
admissibility is not sharp.
Note also that for $n=1$ a weight is \p-admissible if and only if it is
an $A_p$ weight, by Theorem~2 in Bj\"orn--Buckley--Keith~\cite{BBK}, and that
the above ``Jacobian'' technique does not apply in this case.
\end{remark}

\begin{proof}[Proof of Proposition~\ref{prop-suff-admiss-w}]
Let $k(\rho)=\rho w(\rho)^{1/(n-1)}$.
Then $k$ is locally absolutely continuous and \eqref{eq-cond-w'/w}
implies that
\begin{align}  \label{eq-est-k'-ga1}
k'(\rho) &= w(\rho)^{1/(n-1)} + \frac{1}{n-1} \rho w(\rho)^{1/(n-1)-1} w'(\rho) \\
&= w(\rho)^{1/(n-1)} \biggl( 1 + \frac{\rho w'(\rho)}{(n-1) w(\rho)} \biggr) 
\ge \biggl( 1 - \frac{\ga_1}{n-1} \biggr) w(\rho)^{1/(n-1)}, \nonumber
\end{align}
which is positive for a.e.\ $\rho$.
Thus $k$ is strictly increasing.
Note also that integrating the inequality 
$w'(\rho)/w(\rho)\ge-\ga_1/\rho$ implies that 
\[
\frac{w(\rho_2)}{w(\rho_1)} \ge \biggl( \frac{\rho_2}{\rho_1} \biggr)^{-\ga_1}
\]
for $0<\rho_1\le\rho_2<\infty$, and hence
\begin{align*}
k(\rho_2) &= \rho_2 w(\rho_2)^{1/(n-1)} \simge \rho_2^{1-\ga_1/(n-1)} \to \infty,
\quad \text{as }\rho_2\to\infty,  
\intertext{and}
k(\rho_1) &= \rho_1 w(\rho_1)^{1/(n-1)} \simle \rho_1^{1-\ga_1/(n-1)} \to 0,
\quad \text{as }\rho_1\to0,
\end{align*}
showing that $k$ is onto.
From~\eqref{eq-cond-w'/w} and \eqref{eq-est-k'-ga1} we also conclude that
\[
0<1-\frac{\ga_1}{n-1} \le \frac{\rho k'(\rho)}{k(\rho)} 
  \le 1+\frac{\ga_2}{n-1},
\]
i.e.\ that~\eqref{eq-k'-k/rho-est} holds.
Theorem~\ref{thm-cf-admiss} now finishes the proof.
\end{proof}

\begin{remark}
The condition~\eqref{eq-cond-w'/w} can also be expressed in terms of
$f(\rho):=\mu(B(0,\rho))$, where $d\mu=w\,dx$, as follows. 
Since $w(\rho)=C\rho^{1-n}f'(\rho)$, an equivalent condition 
to~\eqref{eq-cond-w'/w} is 
\[
0<n-1-\ga_1 \le \frac{\rho f''(\rho)}{f'(\rho)} \le n-1+\ga_2.
\]
Note that this requires $f''>0$ (since $f$ is increasing), i.e.\ $f$ must be convex,
which excludes small powers $f(r)= r^\al$, $0<\al<1$.
On the other hand, these correspond to $A_1$ weights, and are thus
1-admissible;
see Heinonen--Kilpel\"ainen--Martio~\cite[p.~10]{HeKiMa}
and Theorem~4 in J.~Bj\"orn~\cite{JB-Fenn}, and cf.\ also 
Remark~\ref{rmk-cor-w}.
\end{remark}

We end this section by calculating the variational capacity of annuli with respect
to radial weights in $\R^n$. 

\begin{prop} \label{prop-cap-formula-radial}
Let $w(x)=w(|x|)$ be a radial weight on\/ $\R^n$, $n\ge2$, such that  $w>0$ a.e.\ 
and $w\in L^1\loc(\R^n)$. 
Assume that the corresponding measure $d\mu = w\,dx$ supports
a \p-Poincar\'e inequality at\/ $0$, where $p>1$.
Let $f(r)=\mu(B_r)$, where $B_r=B(0,r)\subset\R^n$.
Then
\[
\cp(B_r,B_R) = \biggl( \int_r^R (f')^{1/(1-p)} \,d\rho \biggr)^{1-p}
\quad \text{whenever\/ } 0 < r < R \le \infty.
\]
\end{prop}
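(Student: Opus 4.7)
The plan is to prove the formula by matching an explicit radial upper-bound construction with a lower bound that comes from Hölder's inequality applied slice-by-slice along rays. Throughout I will write $I(r,R) := \int_r^R (f'(s))^{1/(1-p)}\,ds$ and use the polar-coordinate identity $f'(\rho) = \omega_{n-1} w(\rho)\rho^{n-1}$, valid a.e.\ since $f$ is locally absolutely continuous as an integral of an $L^1\loc$ function.

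For the upper bound, assume first $R < \infty$ and $I(r,R) < \infty$ (otherwise the asserted value is $0$ and one obtains it by truncation of profiles below). Define a radial profile $\phi\colon[0,\infty)\to[0,1]$ by $\phi=1$ on $[0,r]$, $\phi=0$ on $[R,\infty)$, and
\[
\phi(\rho) = 1 - I(r,R)^{-1}\int_r^\rho (f'(s))^{1/(1-p)}\,ds \quad \text{for } r \le \rho \le R,
\]
and set $u(x):=\phi(|x|)$. Then $u\in\Np_0(B_R)$ is admissible for $\cp(B_r,B_R)$ and has upper gradient $g_u(x)=|\phi'(|x|)|$. A direct polar-coordinate computation gives
\[
\int_{B_R} g_u^p\,d\mu = \int_r^R |\phi'(\rho)|^p f'(\rho)\,d\rho = I(r,R)^{-p}\int_r^R (f'(s))^{1/(1-p)}\,ds = I(r,R)^{1-p}.
\]

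For the lower bound, let $u\in\Np_0(B_R)$ be admissible, assumed nonnegative (replacing $u$ by $\min\{\max\{u,0\},1\}$ does not increase its energy). By the identification of $\Np_0(B_R)$ with the refined weighted Sobolev space $W^{1,p}_0(B_R,w)$ on weighted $\R^n$ (see Appendix A.2 of~\cite{BBbook}), the function $u$ is ACL, and for a.e.\ $\omega\in S^{n-1}$ the slice $\rho\mapsto u(\rho\omega)$ is absolutely continuous with $|\partial_\rho u(\rho\omega)|\le g_u(\rho\omega)$ a.e. Quasicontinuity gives $u(r\omega)\ge 1$ and $u(R\omega)=0$ for a.e.\ such $\omega$, whence $1\le\int_r^R g_u(\rho\omega)\,d\rho$, and Hölder's inequality with weight $w(\rho)\rho^{n-1}$ yields
\[
1\le \left(\int_r^R g_u(\rho\omega)^p w(\rho)\rho^{n-1}\,d\rho\right) \left(\int_r^R (w(\rho)\rho^{n-1})^{1/(1-p)}\,d\rho\right)^{p-1}.
\]
Integrating over $S^{n-1}$ and converting back to $\R^n$ via polar coordinates makes the two powers of $\omega_{n-1}$ cancel and produces $\int_{B_R}g_u^p\,d\mu \ge I(r,R)^{1-p}$. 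Taking the infimum over $u$ gives the matching lower bound. For $R=\infty$ I would conclude by appealing to Lemma~\ref{lem-cpX} and passing to the monotone limit in $R$.

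The main obstacle is the ACL slicing step, since only a \p-Poincar\'e inequality at the single point $0$ is assumed. The point is to be sure that admissible Newtonian functions nevertheless admit well-behaved boundary traces $u(r\omega)$, $u(R\omega)$ along a.e.\ radial direction; this follows from the identification of $\Np_0$ with the weighted refined Sobolev space on $\R^n$, after which ACL is classical and the slice-wise boundary condition follows from quasicontinuity and the vanishing of $u$ outside $B_R$. Once this is in place, the rest of the argument is the sharp one-dimensional Hölder computation exhibited by the extremal profile $\phi$ above.
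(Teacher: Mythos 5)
Your upper bound is fine: the explicit profile
\[
\phi(\rho) = 1 - I(r,R)^{-1}\int_r^\rho (f')^{1/(1-p)}\,ds
\]
is the extremal function, and a direct polar-coordinate computation gives the claimed energy, matching the paper's Euler--Lagrange conclusion \eqref{eq-calc-3}.

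The genuine gap is in your lower bound, and you have in fact put your finger on exactly the right difficulty and then waved it away with a citation that does not apply. You appeal to ``the identification of $\Np_0(B_R)$ with the refined weighted Sobolev space $W^{1,p}_0(B_R,w)$'' to deduce the ACL property along radial slices. That identification (Appendix~A.2 of~\cite{BBbook}) is proved for \emph{\p-admissible} weights, i.e.\ weights whose measure is globally doubling and supports a global \p-Poincar\'e inequality. Proposition~\ref{prop-cap-formula-radial} assumes neither: the only hypotheses are $w>0$ a.e., $w\in L^1\loc$, and a \p-Poincar\'e inequality at the single point~$0$. Under these weak assumptions the equality $\Np_0(B_R)=W^{1,p}_0(B_R,w)$ can fail outright, and the ACL/trace structure you need can collapse. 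The paper's own Remark after the proposition exhibits a radial $w\in L^1\loc$ (not satisfying the Poincar\'e assumption at $0$) for which a characteristic function $\chi_{B_{\rt}}$ belongs to $\Np_0(B_R,w\,dx)$ with minimal \p-weak upper gradient $0$, so $\cp(B_r,B_R)=0$, the minimizer is nonradial and nonunique, and no ACL slicing with $|\partial_\rho u|\le g_u$ is available. In short: the slice-wise H\"older computation $1\le\int_r^R g_u(\rho\omega)\,d\rho$ is precisely what may fail without some additional hypothesis controlling the measure along radial segments.

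The paper's route avoids this by never slicing an arbitrary competitor. Instead it (i) symmetrizes a minimizer via Fubini to obtain a \emph{radial} minimizer $u$, and (ii) proves Lemma~\ref{lem-PI=>abs-cont-radial}, which uses the \p-Poincar\'e inequality at $0$ to show that the family $\Ga_{r,R}$ of radial rays has positive \p-modulus with respect to $\mu$. Combined with the fact that Newtonian functions are absolutely continuous on \p-a.e.\ curve (Shanmugalingam, Theorem~1.56 in~\cite{BBbook}) and the radial symmetry of $u$, this yields absolute continuity of $u(\rho)$ and $g_u=|u'|$ a.e., which is exactly the ingredient your argument is missing. To repair your proof you would need to show, using only the \p-Poincar\'e inequality at $0$, that $\Ga_{r,R}$ has positive \p-modulus (so that $g_u$ is $ds$-integrable on a.e.\ radial ray and the trace identities hold); this is essentially a reproof of Lemma~\ref{lem-PI=>abs-cont-radial}, after which one must still take care that the slicing is applied to a function one can control on a.e.\ ray, which is most cleanly done through the paper's symmetrization step.
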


In Section~\ref{sect-sharpness} we applied this formula 
to various weights including weights of logarithmic type.
In Theorems~2.18 and~2.19 in
Heinonen--Kilpel\"ainen--Martio~\cite{HeKiMa}, an integral
estimate was obtained for nonradial weights satisfying the $A_p$ condition.
See also Theorem~3.1 in Holopainen--Koskela~\cite{HoKo}, 
where capacity of annuli in
Riemannian manifolds is estimated in a similar way.

\begin{remark}
For Proposition~\ref{prop-cap-formula-radial},   
we actually do not need the full
\p-Poincar\'e inequality at $0$, it is enough to have it for some ball
$B \supset B_R$ with $\mu(B \setm B_R)>0$.
The Poincar\'e inequality is only used
when proving Lemma~\ref{lem-PI=>abs-cont-radial}, which in turn
is used to show that the minimizer $u$ for $\cp(B_r,B_R)$
is absolutely continuous on
rays and that $g_u=|u'|$. 

These consequences are not always
true if the Poincar\'e assumption is omitted. 
Indeed, if e.g.\  
\[
w(\rho)=\rho^{1-n} \biggl( \sum_{j=1}^\infty 
2^{-j} \biggl( 1 + \frac{1}{|\rho-q_j|} \biggl) \biggl)^{-p} \le \rho^{1-n},
\]
where $\{q_j\}_{j=1}^\infty$ is an enumeration of the positive rational
numbers, then
\(
g(x)=\sum_{j=1}^\infty 
2^{-j} \bigl( 1 + \bigl||x|-q_j\bigr|^{-1} \bigr) \in L^p(B_R,w\,dx)
\)
is for every $\rt\in (r,R)$ an upper gradient of
$u:=\chi_{B_{\rt}}$, 
since $\int_\gamma g\,ds=\infty$ for every curve $\gamma$ 
crossing over $\bdry B_{\rt}$.
Thus $u\in\Np_0(B_R, w\,dx)$ and Corollary~2.21 in
Bj\"orn--Bj\"orn~\cite{BBbook}  
implies that $g_u=0$ a.e.\ in $B_R$. 
It follows that the minimizer is not unique (and may also be nonradial)
and $\cp(B_r,B_R)=0$ in this case. 
Moreover, $g\in\Np(B_R,w\,dx)$ (with itself as an upper gradient),
but $g\notin L^1\loc(\R^n,dx)$, so $g'$ need not be defined (e.g.\ in
the distributional sense).
Cf.\ also the discussion after Proposition~\ref{prop-Cp=0<=>cp=0}
and the discussion about gradients on p.\ 13 in 
Heinonen--Kilpel\"ainen--Martio~\cite{HeKiMa}.

On the other hand, if $w$ is \p-admissible, then Theorem~8.6 in~\cite{HeKiMa}
directly shows that $\cp(B_r,B_R)=\int_{B_R\setm B_r} |\grad u|^p\,w\,dx$, 
where $u$ is the solution of 
\[
\Div(w(x)|\grad u(x)|^{p-2} \grad u(x))=0 
\quad \text{in } B_R\setm B_r
\]
 with the boundary data $1$ on $\bdry B_r$ and $0$ on 
$\bdry B_R$, and only the second half of the proof
below is needed in this case, cf.\ Example~2.22 in~\cite{HeKiMa}.
More general weights require more care and are treated using the metric
space theory.
\end{remark}

\begin{proof}[Proof of Proposition~\ref{prop-cap-formula-radial}]
By Lemma~\ref{lem-cpX} we may assume that $R < \infty$.
First we have
\[
f(r)=\int_{B_r}w\,dx = \omega_{n-1} \int_0^rw(\rho) \rho^{n-1}\,d\rho,
\]
where $\omega_{n-1}$ is the surface area of the 
$(n-1)$-dimensional unit sphere in $\R^n$.
To calculate $\cp(B_r,B_R)$ we need to minimize 
$\int_{B_R\setm B_r}  g_u^p w\,dx$ 
among functions $u$ with $u=1$ on $B_r$
and $u=0$ on $\bdry B_R$.
We shall also see below that under our assumptions, $g_u=|u'|$ a.e.

Since the data are bounded, no
Poincar\'e inequality nor doubling property is needed for the
existence of a minimizer (i.e.\ 
a competing function having \p-energy equal to $\cp(B_r,B_R)$),
by e.g.\ Theorem~5.13 in Bj\"orn--Bj\"orn~\cite{BBnonopen}.
Without such assumptions the minimizer need not
be unique and there may exist a nonradial minimizer,
but there always exists at least one radial minimizer.
Indeed, if $v$ is a minimizer, then
\[
\cp(B_r,B_R) =\int_{B_R\setm B_r} g_v^p w\,dx 
= \int_{\Sp^{n-1}}  \int_r^R g_v^p(\rho \theta) w(\rho)
\rho^{n-1}\,d\rho \, d\theta,
\]
and we can find $\theta_0 \in \Sp^{n-1}$ so that 
\begin{equation}\label{eq:one ray} 
  \int_r^R g_v^p(\rho \theta_0) w(\rho)
     \rho^{n-1}\,d\rho \le \frac{\cp(B_r,B_R)}{\om_{n-1}}.
\end{equation}
Letting $u(x)=v(|x|\theta_0)$ and $g(x)=g_v(|x|\theta_0)$
it is easily verified that $g$ is a \p-weak upper gradient
of $u$ and that, by~\eqref{eq:one ray},
\[
  \int_{B_R\setm B_r} g_u^p w\,dx 
  \le \int_{B_R\setm B_r} g^p w\,dx  \le 
  \cp(B_r,B_R).
\]
Thus $u$ is a radial minimizer.

As usual, we write $u(x)=u(|x|)$, where $u\colon[0,\infty)\to\R$.
We may clearly assume that ${u}$ 
is decreasing, and so 
$u'(\rho)$ exists for a.e.\ $\rho$. 
By Proposition~3.1 in Shanmugalingam~\cite{Sh-rev} (or Theorem~1.56 in~\cite{BBbook}), 
$u$ is absolutely continuous on all curves, except for a curve family with zero
\p-modulus (with respect to the measure $\mu$).
Lemma~\ref{lem-PI=>abs-cont-radial} below shows that the family of all radial rays 
connecting $B_r$ to $\R^n\setm B_R$ has positive \p-modulus.
By symmetry, it then follows that $u$ is absolutely continuous on the interval $[r,R]$ 
and hence, by Lemma~2.14 in Bj\"orn--Bj\"orn~\cite{BBbook}, $g_u = |u'|$ a.e.

Thus,
\[
\int_{B_R\setm B_r} g_u^p w\,dx 
= \omega_{n-1} \int_r^R |u'(\rho)|^p w(\rho) \rho^{n-1}\,d\rho 
= \int_r^R |u'(\rho)|^p f'(\rho)\,d\rho. 
\]
Since $u$ is a minimizer of this integral, it 
solves the corresponding Euler--Lagrange equation
\begin{equation*}   
(|u'|^{p-2}u'f')'=0
\end{equation*}
(which is derived in a standard way)
and hence $|u'|^{p-2}u'f'=A$ a.e. 
It is clear that
$u'\le0$, and so we get $u'=-(A/f')^{1/(p-1)}$ a.e.
To 
determine the constant $A$, notice that 
\begin{equation*}   
1=u(r)-u(R) =-\int_r^Ru'(\rho)\,d\rho = 
\int_r^R \biggl( \frac{A}{f'} \biggr)^{1/(p-1)}\,d\rho,
\end{equation*}
and thus 
\[
A=\biggl( \int_r^R (f')^{1/(1-p)}\,d\rho \biggr)^{1-p}.
\]
Inserting this into the above expressions for $u'$ and 
$\int_{B_R\setm B_r} g_u^p w\,dx$ gives
\begin{align} \label{eq-calc-3}
\cp(B_r,B_R) &= \int_r^R |u'|^p f'\,d\rho 
= \biggl( \int_r^R (f')^{1/(1-p)}\,d\rho \biggr)^{-p}
\int_r^R (f')^{p/(1-p)} f'\,d\rho \nonumber\\ 
& = \biggl( \int_r^R (f')^{1/(1-p)} \,d\rho \biggr)^{1-p}.
\qedhere
\end{align}
\end{proof}

\begin{lem}  \label{lem-PI=>abs-cont-radial}
Under the assumptions of Proposition~\ref{prop-cap-formula-radial}, 
the family\/ $\Ga_{r,R}$
of all radial rays connecting $B_r$ to\/ $\R^n\setm B_R$ 
has positive \p-modulus with respect
to the measure $d\mu = w\,dx$. 
\end{lem}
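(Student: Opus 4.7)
The plan is to combine a H\"older-based lower bound for the $p$-modulus of $\Ga_{r,R}$ in terms of a one-dimensional integral with a Poincar\'e-based lower bound for $\cp(B_r,B_R)$, and play these two bounds off against each other.

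For the modulus, given any admissible $g\in L^p(\mu)$ for $\Ga_{r,R}$, one has $\int_r^R g(\rho\theta)\,d\rho \geq 1$ for every $\theta\in\Sp^{n-1}$. Applying H\"older's inequality along each radial ray with respect to the one-dimensional measure $w(\rho)\rho^{n-1}\,d\rho$ and then integrating over $\Sp^{n-1}$ against the usual surface measure yields
\[
\int g^p\,d\mu \;\geq\; \omega_{n-1}\, J^{1-p}, \qquad\text{where}\qquad J:=\int_r^R \bigl(w(\rho)\rho^{n-1}\bigr)^{-1/(p-1)}\,d\rho.
\]
Hence the $p$-modulus of $\Ga_{r,R}$ is at least $\omega_{n-1}J^{1-p}$, which is positive as soon as $J<\infty$.

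To show $J<\infty$ I would argue by contradiction. Assuming $J=\infty$, I would truncate by setting $h_n:=\min\{(f')^{-1/(p-1)},n\}\in L^\infty([r,R])$ and define the radial Lipschitz profile $\phi_n$ on $[0,\infty)$ by $\phi_n\equiv 1$ on $[0,r]$, $\phi_n\equiv 0$ on $[R,\infty)$, and $\phi_n'=-h_n/A_n$ on $(r,R)$, where $A_n:=\int_r^R h_n\,d\rho\to\infty$ as $n\to\infty$ by monotone convergence. Then $u_n(x):=\phi_n(|x|)$ is globally Lipschitz on $\R^n$ and admissible for $\cp(B_r,B_R)$, with the explicit admissible upper gradient $|\phi_n'(|x|)|$ obtained directly from the global Lipschitz bound. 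Using the pointwise inequality $h_n^{p-1}f'\leq 1$ (immediate from $h_n\leq(f')^{-1/(p-1)}$), a short computation gives $\int g_{u_n}^p\,d\mu\leq A_n^{1-p}\to 0$, and hence $\cp(B_r,B_R)=0$.

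The required contradiction comes from the $p$-Poincar\'e inequality at $0$. Fix any $\rho_0>R$ and let $B=B(0,\rho_0)$. For any $u$ admissible for $\cp(B_r,B_R)$ (extended by zero outside $B_R$), the conditions $u=1$ on $B_r$ and $u=0$ on $B\setm B_R$---both sets having positive $\mu$-measure since $w>0$ a.e.---imply the two-set oscillation bound $\vint_B|u-u_B|\,d\mu\geq 2ab$ with $a:=\mu(B_r)/\mu(B)>0$ and $b:=\mu(B\setm B_R)/\mu(B)>0$. The $p$-Poincar\'e inequality at $0$ then converts this into a lower bound $\int g_u^p\,d\mu\geq c_0>0$ which is uniform in $u$, so $\cp(B_r,B_R)\geq c_0>0$, contradicting the previous step. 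The main subtlety to watch will be keeping the whole chain independent of Proposition~\ref{prop-cap-formula-radial} itself, since this lemma feeds into its proof; this is the reason for insisting on the explicit truncation $h_n\leq n$, which makes the test functions $u_n$ \emph{a priori} globally Lipschitz and allows one to bypass any appeal to absolute continuity on rays when identifying an upper gradient.
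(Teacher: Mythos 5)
Your proof is correct, and it takes a genuinely different route from the paper's. The paper argues directly from the contradiction hypothesis $\mathrm{Mod}_p(\Ga_{r,R})=0$: using Fubini it extracts a radially symmetric $\gt\in L^p(\mu)$ that is nonintegrable along every radial ray, finds by successive halving of intervals a sphere $\bdry B_{\rt}$ across which $\int_\ga\gt\,ds=\infty$ for \emph{every} curve, concludes that $\gt$ is an upper gradient of $n\chi_E$ for every $n$, and lets $n\to\infty$ in the Poincar\'e inequality (the oscillation on the left blows up while the right-hand side stays fixed). You instead first establish, by H\"older along rays, the explicit quantitative bound $\mathrm{Mod}_p(\Ga_{r,R})\ge\om_{n-1}J^{1-p}$, and then show $J<\infty$ by a separate contradiction in which the Poincar\'e inequality enters on the other side: it forces $\cp(B_r,B_R)>0$ via a two-set oscillation estimate, while your explicit truncated Lipschitz profiles would drive the capacity to zero if $J=\infty$. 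Both proofs hinge on the Poincar\'e inequality at $0$, but the contradiction is reached oppositely (blowing up the oscillation vs.\ shrinking the gradient energy), and yours is more constructive — indeed it effectively recovers the sharp constant $\om_{n-1}J^{1-p}$ of Proposition~\ref{prop-cap-formula-radial} itself, and the lower-bound step silently reproves the relevant half of Proposition~\ref{prop-Cp=0<=>cp=0}, which you could also have cited. Two minor points worth tidying but not genuine gaps: the $2ab$ oscillation bound presupposes $0\le u\le 1$, so either state the WLOG truncation of admissible functions or use the unconstrained bound $\min\{a,b\}$; and the claim that the nonconstant function $|\phi_n'(|\cdot|)|$ (not merely its supremum $n/A_n$) is an upper gradient of $u_n$ deserves one line, e.g.\ via the monotonicity of $\phi_n$ and the $1$-Lipschitz map $t\mapsto|\ga(t)|$ together with a coarea-type change of variables.
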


\begin{proof}
Assume on the contrary that the \p-modulus of $\Ga_{r,R}$ is zero. Then 
there exists $g\in L^p(B_R\setm B_r,\mu)$ such that for every radial
ray $\ga$ connecting $r\theta$ to $R\theta$, where $\theta\in \Sp^{n-1}$, we have 
\[
\int_\ga g\,ds = \int_r^R g(\rho\theta)\,d\rho = \infty.
\]
Since $g\in L^p(B_R\setm B_r,\mu)$,
Fubini's theorem implies that for a.e.\ $\theta\in \Sp^{n-1}$,
\[
\int_r^R g(\rho\theta)^p w(\rho\theta) \rho^{n-1} \,d\rho < \infty.
\]
Choose one such $\theta\in \Sp^{n-1}$ and set 
$\gt(|x|)=\gt(x)=g(|x|\theta)$, $x\in B_R\setm B_r$.
Then $\gt$ is radially symmetric, $\gt\in L^p(B_R\setm B_r,\mu)$,  and
$\int_\ga \gt\,ds = \infty$ for every $\ga\in\Ga_{r,R}$.

Since $\int_r^R \gt\,dt=\infty$, we can by successively halving
intervals find a decreasing  sequence of intervals $[a_j,b_j]$ such that
$\int_{a_j}^{b_j} \gt\,dt=\infty$ and $b_j - a_j \to 0$, as $j \to
\infty$. Letting $\rt=\lim_{j \to \infty} a_j$ we see that
either
$\int_{\rt - \eps}^{\rt} \gt\,dt=\infty$ for all $\eps >0$,
 or $\int_{\rt}^{\rt+\eps} \gt\,dt=\infty$ for all $\eps >0$
(or both).
Let in the former case $E=B_{\rt}$ and in the latter case
$E=\itoverline{B}_{\rt}$.

If $\ga\colon[0,l_\ga]\to\R^n$
 is any (nonradial) curve connecting $E$ to $\R^n\setm E$, then using
the symmetry of $\gt$ it is easily verified that 
\[
\int_\ga \gt\,ds \ge \int_{|\ga(0)|}^{|\ga(l_\ga)|}
\gt\, dt 
= \infty.
\]
Thus $\gt$ is an upper gradient of $u_n=n\chi_{E}$ for every $n=1,2,\ldots$\,.
Since $u_n\in\Np(B_{2R},\mu)$,
applying the \p-Poincar\'e
inequality at $0$ to $u_n$ gives
\[
0 < n \vint_{B_{2R}} |u_1 - u_{1,B_{2R}}|\,d\mu 
= \vint_{B_{2R}} |u_n - u_{n,B_{2R}}|\,d\mu 
\le CR \biggl( \vint_{B_{2R}} \gt^p \,d\mu \biggr)^{1/p} < \infty.
\]
Letting $n\to\infty$ leads to a contradiction, showing that $\Ga_{r,R}$ has positive
\p-modulus.
\end{proof}

\end{document}